\title[Monoidal Gr\"obner bases and applications]{Finite Gr\"obner bases in infinite dimensional \\ polynomial rings and applications}
\author{Christopher J. Hillar}
\address{Mathematical Sciences Research Institute, 17 Gauss Way, Berkeley, CA 94720} 
\email{chillar@msri.org}
\author{Seth Sullivant}
\address{Department of Mathematics  \\ North Carolina State University, Raleigh, NC}
\email{smsulli2@ncsu.edu}
\thanks{Hillar was partially supported by an NSA Young Investigator Grant and an NSF All-Institutes Postdoctoral Fellowship administered by the Mathematical Sciences Research Institute through its core grant DMS-0441170.  Sullivant is partially supported by  NSF grant DMS-0840795.  Part of this research was carried out during visits to SAMSI}
\date{\today}
\theoremstyle{plain}
\newtheorem{thm}{Theorem}[section]
\newtheorem{lemma}[thm]{Lemma}
\newtheorem{prop}[thm]{Proposition}
\newtheorem{cor}[thm]{Corollary}
\theoremstyle{definition}
\newtheorem{defn}[thm]{Definition}
\newtheorem{ex}[thm]{Example}
\newtheorem{pr}[thm]{Problem}
\newtheorem{ques}[thm]{Question}
\newtheorem{rmk}[thm]{Remark}
\newcommand{\ind}{\mbox{$\perp \kern-5.5pt \perp$}}
\newcommand{\indsub}{{\mbox{\scriptsize$\perp \kern-4.5pt \perp$}}}
\DeclareMathSymbol{\N}{\mathbin}{AMSb}{"4E}
\DeclareMathSymbol{\Z}{\mathbin}{AMSb}{"5A}
\newcommand{\<}{\langle}
\renewcommand{\>}{\rangle}
\def \N { {\mathbb N} } 
\def \R { {\mathbb R} }
\def \Z { {\mathbb Z} }
\def \K { {\mathbb K} }
\def \P { {\mathbb P} }
\def \S { {\frak S} }
\begin{document}
\maketitle

\begin{abstract}
We introduce the theory of monoidal Gr\"obner bases, a concept which generalizes the familiar notion in a polynomial ring and allows for a description of Gr\"obner bases of ideals that are stable under the action of a monoid.  
The main motivation for developing this theory is to prove finiteness results in commutative algebra and applications.  A basic theorem of this type is that ideals in infinitely many indeterminates stable under the action of the symmetric group are finitely generated up to symmetry.  Using this machinery, we give new streamlined proofs of some classical finiteness theorems in algebraic statistics as well as a proof of the independent set conjecture of Ho\c{s}ten and the second author.
\end{abstract}


\section{Introduction}

In commutative algebra and its applications, one is frequently presented with a family of ideals in increasingly larger polynomial rings, and often it is observed that, up to some natural symmetry of the ideals, there exists a finite set of polynomials generating all of them.  Such situations arise in universal algebra and group theory \cite{Cohen67, Drensky06}, algebraic statistics \cite{Aoki2003, Santos2003, DrtonSturmSull07,Hosten2007, Draisma2008, BrouwerDraisma}, algebraic problems in chemistry \cite{Ruch3, Aschenbrenner2007, Draisma2008}, and in  classical results from combinatorial commutative algebra (for instance, that the $k \times k$
minors of a generic matrix form a Gr\"obner basis for the ideal they generate \cite{Sturmfels1990}).  The particular form of one of these finiteness results typically depends on the specifics of the family of ideals.  However, one wonders if there is a general principle at work that can explain a large portion of these phenomena.

We propose a general framework for proving finiteness theorems in rings with a monoid action.  In this setting, a finiteness theorem takes one of two forms:  (1) that a certain module over a noncommutative ring is Noetherian or (2) that a chain of ideals involving a monoidal filtration stabilizes.  Although the precise formulation of our theory requires the setup found in Section \ref{Pgroebner}, a typical result of the first type has the following flavor:

\begin{thm}\label{thm:sn}
The polynomial ring $\K[X_{[r] \times \P}]$ is a Noetherian $\K[X_{[r] \times \P}] \ast \S_\P$-module.
\end{thm}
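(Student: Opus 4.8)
The plan is to build the theory of monoidal Gr\"obner bases and then read off the theorem as a special case. Write $R=\K[X_{[r]\times\P}]$. The $R\ast\S_\P$-submodules of $R$ are precisely the $\S_\P$-stable ideals, and a module is Noetherian exactly when every ascending chain of submodules stabilizes (equivalently, every submodule is finitely generated); so the assertion is that every $\S_\P$-stable ideal of $R$ is generated as an ideal by finitely many of its elements together with their $\S_\P$-orbits, and that ascending chains of such ideals terminate.

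The first step is to replace $\S_\P$ by the monoid $\mathrm{Inc}$ of strictly increasing maps $\P\to\P$. A given $\phi\in\mathrm{Inc}$ acts on a fixed $f\in R$ only through its restriction to the finite set of column indices occurring in $f$, and such a restriction --- an increasing injection of a finite set --- agrees with the restriction of some permutation in $\S_\P$; hence $\mathrm{Inc}\cdot f\subseteq\S_\P\cdot f$ for every $f$. Consequently each $\S_\P$-stable ideal is $\mathrm{Inc}$-stable, and an $\S_\P$-stable ideal generated up to $\mathrm{Inc}$ by finitely many elements is generated up to $\S_\P$ by the same elements; so it suffices to prove that $R$ is a Noetherian $R\ast\mathrm{Inc}$-module. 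The point of this passage is that $R$ admits \emph{no} total monomial order compatible with the full $\S_\P$-action --- a transposition of two columns would be forced both to raise and to lower a pair of variables --- whereas the ``column-lexicographic'' order, which compares two monomials by reading their columns in increasing order of the index $j$ and breaks ties by any fixed order within a column, is preserved by every $\phi\in\mathrm{Inc}$, since such a $\phi$ relabels columns order-preservingly.

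With a compatible term order in hand, the monoidal Gr\"obner machinery runs exactly as in the classical case: for an $\mathrm{Inc}$-stable ideal $I$ the leading-monomial set $\lm(I)$ is an $\mathrm{Inc}$-stable monomial ideal; once $\lm(I)$ is generated up to $\mathrm{Inc}$ by finitely many monomials $\lm(g_1),\dots,\lm(g_n)$, the usual rewriting argument shows $\{g_1,\dots,g_n\}$ is an $\mathrm{Inc}$-Gr\"obner basis and in particular generates $I$ up to $\mathrm{Inc}$; and an ascending chain of $\mathrm{Inc}$-stable ideals gives an ascending chain of $\mathrm{Inc}$-stable monomial ideals $\lm(I_1)\subseteq\lm(I_2)\subseteq\cdots$, which stabilizes once such monomial ideals are known to satisfy the ascending chain condition. (All of this is legitimate although $R$ has infinitely many variables, because every polynomial and every reduction step involves only finitely many of the $x_{i,j}$.) Thus the whole theorem reduces to one combinatorial statement: the monomials of $R$, ordered by $m\preceq m'$ iff $\phi(m)\mid m'$ for some $\phi\in\mathrm{Inc}$, form a well-partial-order. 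Well-foundedness is automatic since a monomial has only finitely many divisors, so the content is the absence of an infinite antichain.

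To see this, encode a monomial $\prod_{i,j}x_{i,j}^{a_{ij}}$ by the word over $\N^r$ listing its nonzero column vectors $(a_{1j},\dots,a_{rj})$ in increasing order of $j$; modulo $\mathrm{Inc}$ this is a well-defined finite word over $\N^r$, and under this identification $\preceq$ becomes precisely Higman's word-domination order (there is a strictly increasing matching of the letters of the first word to coordinatewise-larger letters of the second). Now $(\N^r,\le)$ is a well-partial-order by Dickson's Lemma, and Higman's Lemma says that the finite words over a well-partial-order, under domination, again form a well-partial-order; applied to $\N^r$ this yields the claim and completes the proof. I expect the main obstacle to be conceptual rather than computational: the impossibility of an $\S_\P$-compatible term order blocks a direct Gr\"obner-basis attack on the symmetric group, and the crux is to notice that the weaker monoid $\mathrm{Inc}$ both preserves $\S_\P$-stability at the level of ideals and does carry a compatible order, which is what reduces everything to the Dickson--Higman input.
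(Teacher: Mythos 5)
Your proposal is correct and follows essentially the same route as the paper: pass from $\S_\P$ to the monoid of increasing maps (using that any increasing map agrees with a permutation on the finitely many columns of a given polynomial), use the column-wise lexicographic (shift) order as the compatible $P$-order, run the monoidal Gr\"obner-basis/rewriting argument, and reduce everything to the well-partial-ordering of the shift-divisibility order via Dickson's Lemma plus Higman's Lemma. The only differences are presentational (the paper packages the rewriting and chain arguments as general statements about $P$-orders and $P$-divisibility), not mathematical.
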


Here, $\K[X_{[r] \times \P}]$ is a polynomial ring over a field $\K$ in the indeterminates $x_{i,j}$ with $i \in [r]:= \{1,2, \ldots, r \}$ and $j \in \P := \{1,2,3, \ldots \}$, the set of positive integers. Also, $\S_\P$ is the set of permutations of $\P$, acting on  $\K[X_{[r] \times \P}]$ by way of $\sigma \cdot x_{i,j} = x_{i, \sigma(j)}$, and the ring $\K[X_{[r] \times \P}] \ast \S_\P$ is the skew-monoid ring associated to $\K[X_{[r] \times \P}]$ and $\S_\P$ (see Section \ref{Pgroebner} for more details).  Stated simply, Theorem  \ref{thm:sn} says that every ideal in $\K[X_{[r] \times \P}]$ that is stable under the action of $\S_\P$ has a finite generating set up to $\S_\P$ symmetry.  

A version of Theorem \ref{thm:sn} was first proved by Cohen \cite{Cohen67} in an application to the theory of free metabelian groups, and then rediscovered much later in the study of some polynomial finiteness questions inspired by chemistry \cite{Aschenbrenner2007} (see also \cite{kemer2008} for another recent proof). Here, we study its application to algebraic statistics, and in particular its uniform treatment of some classical results in that field \cite{Hosten2007, Santos2003}.  Recent work by Draisma on finiteness problems for the factor analysis model \cite{Draisma2008} also depends on Theorem \ref{thm:sn}.

To prove Theorem \ref{thm:sn} and similar results, we shall develop a suitable theory of Gr\"obner bases for certain modules over (noncommutative) rings.  Section \ref{Pgroebner} contains this general theory of monoidal Gr\"obner bases and is the technical heart of the paper.  In this framework, we have a monoid $P$ of endomorphisms acting on a semigroup ring $\K[Q]$ (over a field $\K$), and a partial order (called the \textit{$P$-divisibility order}) on the monomials of $\K[Q]$ that respects this action.  Theorem \ref{main_GB_thm}, the main result in Section \ref{Pgroebner}, is then the statement that finite Gr\"obner bases exist with respect to the monoid $P$  if and only if $P$-divisibility is a well-partial-ordering.  In many cases of interest (such as in our applications to algebraic statistics), this order condition is straightforward to check, leading directly to finite generation of ideals up to $P$-action.  For instance, in the particular case of Theorem \ref{thm:sn}, the condition reduces to a classical lemma of Higman \cite{Higman} in the order theory of words.  Not surprisingly, all known proofs of Theorem \ref{thm:sn} use Higman's lemma in an essential way.

We also introduce in Section \ref{Pgroebner} the concept of a filtration for a chain of ideals subject to the action of the monoid $P$ (Definition \ref{def:filtration}).  This notion allows us to pass from ideals in finitely many variables to ideals in infinitely many variables, and it can be used to formulate and prove finiteness theorems for $P$-invariant chains of ideals.  Our main result in this regard is Theorem \ref{thm:symminvarstab}; it says that a $P$-invariant chain stabilizes with respect to a filtration (also) when $P$-divisibility is a well-partial-order.

Section \ref{examples} is concerned with the major implications of the theory contained in Section \ref{Pgroebner} and, in particular, a proof of Theorem \ref{thm:sn}.  Beyond this result, we also provide a strategy using quotient modules for proving finite generation theorems for special ideals in rings (such as $\K[X_{\P \times \P}]$) that are not Noetherian modules over skew-group rings (such as $\K[X_{\P \times \P}] \ast ( \S_\P \times \S_\P)$).  Section \ref{sec:algstat} contains our application of these ideas to finiteness theorems for Markov bases in algebraic statistics, including new proofs of the main results in \cite{Hosten2007,Santos2003} as well as a proof of the independent set conjecture \cite[Conj. 4.6]{Hosten2007}.  The latter result, stated as Theorem \ref{thm:independentset} below and proved using filtrations, gives a finiteness property for Markov bases in models that have independent vertex sets.  

Finally, Section \ref{sec:further} is devoted to a discussion of questions and problems left unresolved by this paper.  In particular, the computational consequences of our work remain open.  


\section{Monoidal Gr\"obner Bases}\label{Pgroebner}

In this section we develop our most important basic tools: finiteness theorems for invariant ideals of monoidal rings.  These ideas generalize those of Aschenbrenner and the first author \cite{Aschenbrenner2007}, and the proofs use similar ideas.  The importance of our generalization comes both from its usefulness, which will be illustrated throughout the paper, and from   our distillation and simplification of the main techniques from \cite{Aschenbrenner2007}, which might be of independent interest.  

The main results of this section are Theorem \ref{main_GB_thm} and Theorem \ref{thm:symminvarstab}.  Theorem \ref{main_GB_thm} gives a finiteness criterion for monoidal Gr\"obner bases which we will combine with Higman's lemma in Section \ref{examples} to prove Theorem \ref{thm:sn} from the introduction.  Our other main result, Theorem \ref{thm:symminvarstab}, gives the same criterion for chains of ideals to stabilize, and we will use it to prove the independent set conjecture \cite{Hosten2007} in algebraic statistics (Theorem \ref{thm:independentset} below).

We begin with an abstract setting.  Let $\K$ be a field, let $Q$ be a (possibly noncommutative) semigroup with identity (also called a \textit{monoid}), and let $\K[Q]$ be the semigroup ring associated to $Q$ (over $\K$).  We call the elements of $Q$ the \textit{monomials} of $\K[Q]$.  
Let $P$ be a monoid of $\K$-algebra endomorphisms of $\K[Q]$ (with multiplication in $P$ given by composition).    

Associated to $\K[Q]$ and $P$ is the \textit{skew-monoid ring} $\K[Q] \ast  P$, which is formally the set of all 
linear combinations,  
\[ \K[Q] \ast P = \left\{ \sum_{i=1}^k c_i q_i p_i \ : \  c_i \in \K, q_i \in Q, \ p_i \in P \right\}.\]
Multiplication of monomials in the ring $\K[Q] \ast  P$  is given by \[q_1 p_1 \cdot q_2 p_2 = q_1 (p_1 q_2) (p_1p_2),\] and extended by distributivity to the whole ring.  Note that $p_{1}q_{2}$ in this expression denotes the result of applying the endomorphism $p_1$ to $q_{2}$ which is in $\K[Q]$ but is not necessarily a monomial.  The natural (left) action of the skew-monoid ring on $\K[Q]$ makes $\K[Q]$ into a (left) module over $\K[Q] \ast  P$ as one can readily verify.\footnote{We must use the skew-monoid ring $\K[Q] \ast  P$ instead of the \textit{monoid ring} $\K[Q][P]$ to ensure that  $\K[Q]$ is a module.   The authors of \cite{Aschenbrenner2007} made such a mistake although none of the results there need to be modified except to make this adjustment (the ring structure of $\K[Q] \ast  P$ was not used in their proofs).} 

We say that a (left) ideal $I \subseteq \K[Q]$ is \textit{P-invariant} if 
\[PI := \{pn : p \in P,\ n \in I\} =  I.\]
Stated another way, a $P$-invariant ideal is simply a $\K[Q] \ast P$-submodule of $\K[Q]$. 
We want to provide a general setting for defining what it means for a $P$-invariant ideal $I$ of $\K[Q]$ to have a 
$P$-Gr\"obner basis.  Of specific interest for applications is when $I$ has a finite $P$-Gr\"obner basis, and our main contribution is a sufficient condition on $P$ and $Q$ under which this happens (see Theorem \ref{main_GB_thm}).  The examples found in the next section will illustrate the usefulness of our general framework.

\begin{rmk}
In many of our applications, $Q$ will be a subsemigroup of the semigroup of natural number sequences with finite support  (so that $\K[Q]$ is a subring of a polynomial ring), and $P$ will be defined using maps on the indices of the indeterminates in that polynomial ring.  When $P = \{\bold{1}\}$ consists of only the identity and $\K[Q]$ is a polynomial ring in a finite number of variables, we recover the classical formulation of Gr\"obner bases (see e.g.~\cite[Ch.~2]{Cox1}).  
\end{rmk}

If we have a total ordering $\preceq$ of $Q$, we can speak of the \textit{initial monomial} or \textit{leading monomial} $q = {\rm in}_\prec(f)$ of any nonzero $f \in \K[Q]$, which is the largest element $q \in Q$ with respect to $\preceq$ appearing with nonzero coefficient in $f$.  For notational convenience, we set ${\rm in}_\prec(f) = 0$ whenever $f = 0$, and also $0 \prec q$ for all $q \in Q$.  We are interested in those orderings which are naturally compatible with the linear action of $\K[Q] \ast P$.

\begin{defn}[$P$-orders]\label{defn:order}
A well-ordering $\preceq$ of $Q$ is called a $P$-\textit{order} on $\K[Q]$ if for all $q \in Q$, $p \in P$, and $f \in \K[Q]$, we have
$$ {\rm in}_\prec (qp \cdot f) = {\rm in}_\prec(qp \cdot {\rm in}_\prec(f)).$$
\end{defn}

In the next section, we shall provide examples of $P$-orders.  
The most important example of a $P$-order for us will be the \textit{shift order} on monomials (see Theorem \ref{thm:basicpoly}).   


Some basic facts about $P$-orders are collected in the following lemma.  Note that when $P = \{\bold{1}\}$, a $P$-order is simply a \textit{term order} on monomials.  For a useful characterization of $P$-orders, see Proposition \ref{prop:orderchar} below.

\begin{lemma}\label{lemma:Porderfacts}
Suppose that $\preceq$ is a $P$-order on $\K[Q]$.  Then the following hold:
\begin{enumerate}
\item For all $q\in Q$, $p \in P$, and $q_1,q_2 \in Q$, we have $q_1 \prec q_2 \  \Longrightarrow  \ {\rm in}_{\prec}(qpq_1) \preceq {\rm in}_{\prec}(qpq_2)$.  
\item If ${\rm in}_{\prec}(qpf) = {\rm in}_{\prec}(qpg)$ for some $q \in Q$, $p \in P$ and $f,g \in \K[Q]$, then either \mbox{${\rm in}_{\prec}(f) = {\rm in}_{\prec}(g)$} or $qpf = qpg = 0$.  
\item $Q$ is left-cancellative: for all $q,q_1,q_2 \in Q$, we have
$qq_1 = qq_2 \  \Longrightarrow \  q_1 = q_2.$
\item $q_2 \preceq q_1 q_2$ for all $q_1,q_2 \in Q$ (in particular, $1$ is the smallest monomial). 
\item All endomorphisms in $P$ are injective.
\item For all $q \in Q$ and $p \in P$, we have $q \preceq {\rm in}_\prec(pq)$.
\end{enumerate}
\end{lemma}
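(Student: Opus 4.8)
The plan is to prove the six items of Lemma~\ref{lemma:Porderfacts} essentially in the order stated, extracting each from the defining identity $\mathrm{in}_\prec(qp\cdot f)=\mathrm{in}_\prec(qp\cdot\mathrm{in}_\prec(f))$ of a $P$-order together with the fact that $\preceq$ is a well-ordering.

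For (1), I would apply the defining identity twice: since $\mathrm{in}_\prec(q_i)=q_i$, we get $\mathrm{in}_\prec(qpq_i)=\mathrm{in}_\prec(qp\cdot\mathrm{in}_\prec(c_1q_1+c_2q_2))$-type manipulations; more directly, consider $f=q_2-q_1$ (assuming $q_1\prec q_2$, so $\mathrm{in}_\prec(f)=q_2$), giving $\mathrm{in}_\prec(qp(q_2-q_1))=\mathrm{in}_\prec(qpq_2)$. Since $qp(q_2-q_1)=qpq_2-qpq_1$ and its initial monomial is that of $qpq_2$, the monomial $\mathrm{in}_\prec(qpq_1)$ must be $\preceq\mathrm{in}_\prec(qpq_2)$ (it appears with nonzero coefficient in $qpq_1$, hence in the difference, unless cancellation makes it disappear — in which case it equals $\mathrm{in}_\prec(qpq_2)$ and the inequality still holds). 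For (2): if $\mathrm{in}_\prec(qpf)=\mathrm{in}_\prec(qpg)$, then by the $P$-order identity this equals $\mathrm{in}_\prec(qp\cdot\mathrm{in}_\prec(f))=\mathrm{in}_\prec(qp\cdot\mathrm{in}_\prec(g))$; if these are all nonzero, then $\mathrm{in}_\prec(f),\mathrm{in}_\prec(g)$ are both nonzero and by (1) applied in both directions (antisymmetry of $\preceq$) we conclude $\mathrm{in}_\prec(f)=\mathrm{in}_\prec(g)$; otherwise the common initial monomial is $0$, forcing $qpf=qpg=0$.

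For (3), take $p=\mathbf{1}$ and suppose $qq_1=qq_2$; then trivially $\mathrm{in}_\prec(q\cdot q_1)=\mathrm{in}_\prec(q\cdot q_2)$, and since $q_1,q_2\neq 0$, part (2) gives $\mathrm{in}_\prec(q_1)=\mathrm{in}_\prec(q_2)$, i.e.\ $q_1=q_2$. For (4), I would argue by contradiction using the well-ordering: suppose $q_1q_2\prec q_2$ for some $q_1,q_2$. Iterating left-multiplication by $q_1$ and invoking (1) (with $q=q_1$, $p=\mathbf{1}$) yields a strictly decreasing sequence $q_2\succ q_1q_2\succ q_1^2q_2\succ\cdots$, contradicting that $\preceq$ is a well-ordering; hence $q_2\preceq q_1q_2$ always, and taking $q_2=1$ shows $1\preceq q_1$ for all $q_1$. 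For (5): if $p\in P$ is not injective, say $pq_1=pq_2$ with $q_1\neq q_2$, WLOG $q_1\prec q_2$; then $\mathrm{in}_\prec(1\cdot p\cdot q_2)=pq_2=pq_1=\mathrm{in}_\prec(1\cdot p\cdot q_1)$, but part (2) would force $q_1=q_2$ (since both are nonzero), a contradiction. Finally (6): apply the $P$-order axiom with $q=1$; we have $\mathrm{in}_\prec(pq)=\mathrm{in}_\prec(p\cdot\mathrm{in}_\prec(q))$, so it suffices to note that $q$ appears in $pq$ — wait, that isn't automatic. Instead I would use that $p$ is an endomorphism so $p$ fixes $\K$; writing $pq=\sum c_i r_i$ with $\mathrm{in}_\prec(pq)=r_{i_0}$, and observing via part (4) and the multiplicativity $q\cdot p(\,\cdot\,)$ structure. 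The cleanest route: apply the $P$-order identity to $q'p\cdot f$ with $q'=q$, $p=\mathbf 1$ after noting that the chain $q\preceq\mathrm{in}_\prec(pq)$ follows because otherwise $\mathrm{in}_\prec(pq)\prec q$ combined with $P$-orders applied to powers of $p$ gives an infinite descent (since $p^n q$ has initial term $\preceq$ that of $p^{n-1}q$ by the order axiom and injectivity), contradicting the well-ordering.

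The main obstacle I anticipate is part (6): the $P$-order axiom directly controls $\mathrm{in}_\prec(qp\cdot f)$ in terms of $\mathrm{in}_\prec(f)$, but here the monoid element $p$ sits on the \emph{inside}, and an endomorphism $p$ need not be monotone on $Q$ a priori. The key idea to push through will be to exploit that $\preceq$ is a \emph{well}-ordering together with iterating $p$: the sequence $\mathrm{in}_\prec(p^nq)$ must be non-increasing (this is where one carefully feeds $\mathrm{in}_\prec(p^{n-1}q)$ into the $P$-order identity and uses injectivity of $p$ from (5)), so it stabilizes, and stabilization plus injectivity forces $q\preceq\mathrm{in}_\prec(pq)$. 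Parts (1)--(5) are routine once (2) is in hand; (6) is the one requiring a genuine well-foundedness argument rather than a one-line deduction.
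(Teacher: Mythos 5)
Your plan follows the paper's architecture (a sum/difference trick for (1), deducing (3) from (2), well-ordering descents for (4) and (6)), but the argument you give for part (2) has a genuine gap, and (2) is the linchpin: your (3), (5), and (6) all lean on it. Part (1) — both as stated and as you prove it — yields only the \emph{weak} inequality ${\rm in}_\prec(qpq_1) \preceq {\rm in}_\prec(qpq_2)$, so from ${\rm in}_\prec(qp \cdot {\rm in}_\prec(f)) = {\rm in}_\prec(qp \cdot {\rm in}_\prec(g))$ you cannot conclude ${\rm in}_\prec(f) = {\rm in}_\prec(g)$ ``by (1) in both directions plus antisymmetry'': equality of the images is perfectly consistent with ${\rm in}_\prec(f) \prec {\rm in}_\prec(g)$ as far as (1) is concerned. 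The strict form of (1) is exactly Proposition \ref{prop:orderchar}, whose proof in the paper itself uses (2), (3), (5), so appealing to strictness here would be circular. The missing idea is the cancellation argument: assuming ${\rm in}_\prec(f) \prec {\rm in}_\prec(g)$ and ${\rm in}_\prec(qpf) = {\rm in}_\prec(qpg) \neq 0$, choose $c \in \K$ so that the leading terms of $qpg$ and $c\,qpf$ cancel; then ${\rm in}_\prec(qpg - c\,qpf) \prec {\rm in}_\prec(qpg)$, while the $P$-order axiom applied to $g - cf$ (whose leading monomial is still ${\rm in}_\prec(g)$, since ${\rm in}_\prec(f) \prec {\rm in}_\prec(g)$) gives ${\rm in}_\prec(qp(g - cf)) = {\rm in}_\prec(qp \cdot {\rm in}_\prec(g)) = {\rm in}_\prec(qpg)$, a contradiction. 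You use exactly this kind of difference trick in (1), so the fix is short, but as written (2) does not stand.

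Two secondary points. In (5), showing that distinct monomials have distinct images only proves $p$ is injective on $Q$; injectivity of $p$ as a map on $\K[Q]$ means trivial kernel, and a linear map can separate monomials while killing a nontrivial linear combination. The paper avoids this by proving directly that $pf \neq 0$ for all $f \neq 0$: using (4), (1), and the axiom with $q = 1$, one gets $1 = {\rm in}_\prec(p \cdot 1) \preceq {\rm in}_\prec(p \cdot {\rm in}_\prec(f)) = {\rm in}_\prec(pf)$. In (4), weak (1) only makes the sequence $q_2, q_1q_2, q_1^2q_2, \ldots$ non-increasing, not strictly decreasing; you then need the well-ordering to force $q_1^{k+1}q_2 = q_1^k q_2$ and left-cancellation (3) to get $q_1q_2 = q_2$, as the paper does. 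Your sketch of (6) — iterate $p$, use well-ordering to stabilize, then descend — is the paper's argument, but the descent step is precisely an application of (2) together with (5), so it too depends on repairing (2).
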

\begin{proof}
(1):   If ${\rm in}_{\prec}(qpq_1) \neq {\rm in}_{\prec}(qpq_2)$, then 
\[ \max\{{\rm in}_\prec (qpq_1), {\rm in}_\prec (qpq_2) \} = {\rm in}_\prec (qpq_1 + qp q_2) =  {\rm in}_\prec ( qp \cdot {\rm in}_\prec (q_1 + q_2)) ={\rm in}_\prec (qp q_2),\] and the claim follows. 

(2): If $ {\rm in}_{\prec}(qpg)= 0$, then $qpg =0$, so assume that ${\rm in}_{\prec}(qpf) = {\rm in}_{\prec}(qpg) \neq 0$.  If ${\rm in}_{\prec}(f) \prec {\rm in}_{\prec}(g)$, there exists $c \in \K$ such that  the leading terms of $qpg$ and $cqp f$ are the same.  This implies that,
\[    
{\rm in}_\prec ( qp \cdot {\rm in}_\prec (g)) = 
{\rm in}_\prec ( qp \cdot {\rm in}_\prec (g- cf)) =  
{\rm in}_\prec (qpg - cqp f)  \prec 
{\rm in}_\prec (qpg) =   
{\rm in}_\prec ( qp \cdot {\rm in}_\prec (g)),
\]
 which is a contradiction.    The first equality follows since ${\rm in}_{\prec}(f) \prec {\rm in}_{\prec}(g)$, the second since $\preceq$ is a $P$-order, the middle inequality since the leading terms of $qpg$ and $cqp f$ are the same, and the final equality follows again since $\preceq$ is a $P$ order.   Switching the roles of $f$ and $g$, we therefore have ${\rm in}_{\prec}(f) = {\rm in}_{\prec}(g)$.  
 
(3):  Follows directly from (2) with $p = \textbf{1}$, $f = q_1$, and $g =q_2$.

(4):  Suppose that $q_1 q_2 \preceq q_2$ for some $q_1,q_2 \in Q$.  Since $\preceq$ is a well-order, the infinite decreasing sequence obtained by using (1) repeatedly: \[ \cdots \preceq q_1^3 q_2 \preceq q_1^2 q_2 \preceq q_1 q_2 \preceq q_2,\] must terminate; in this case, we have $q_1^{k+1}q_2 = q_1^k q_2$ for some $k \in \N$.  It follows that $q_1 q_2 = q_2$ from (3), which proves (4).  

(5):  Let $p \in P$ and let $0 \neq f \in \K[Q]$.  From (1) and (4) and the fact that $p$ is a ring homomorphism, it follows that $1 = {\rm in}_{\prec}(p \cdot 1) \preceq {\rm in}_{\prec}(p \cdot {\rm in}_{\prec}(f)) = {\rm in}_{\prec}(pf)$.  Thus, $pf$ is nonzero for all $f \neq 0$, so $p$ is injective. 

(6):  Finally, suppose that ${\rm in}_\prec(pq) \preceq q$ for some $q \in Q$ and $p \in P$.  This gives us an infinite decreasing sequence, \[ \cdots \preceq {\rm in}_\prec(p^3 q) \preceq {\rm in}_\prec(p^2q) \preceq {\rm in}_\prec(pq) \preceq q.\]  Since $\preceq$ is a well-ordering, we must have ${\rm in}_\prec(p^{k+1} q) = {\rm in}_\prec(p^k q)$ for some $k \in \N$.  Using (2) and (5) in conjunction, it follows that ${\rm in}_\prec(pq) = q$, thereby proving (6).
\end{proof}

It turns out that properties (1) and (2) in Lemma \ref{lemma:Porderfacts} characterize when $P$-orders exist (the others follow from these).  As the following proposition demonstrates, we may further reduce the number of axioms to one.  This will be useful in proving that certain well-orderings on $Q$ are $P$-orders.

\begin{prop}[Characterization of $P$-orders]\label{prop:orderchar}
Let $Q$ be a monoid and let $P$ be a monoid of $\K$-algebra endomorphisms of $\K[Q]$.  Then a well-ordering $\preceq$ of $Q$ is a $P$-order if and only if for all $q \in Q$, $p \in P$, and $q_1,q_2 \in Q$, we have 
\[q_1 \prec q_2 \  \Longrightarrow  \ {\rm in}_{\prec}(qpq_1) \prec {\rm in}_{\prec}(qpq_2).\]
\end{prop}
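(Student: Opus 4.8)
The plan is to prove the two implications separately. The forward direction will drop out of Lemma~\ref{lemma:Porderfacts}; the converse carries the actual content.

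For the forward direction, I would assume $\preceq$ is a $P$-order and fix $q\in Q$, $p\in P$, and monomials $q_1\prec q_2$. Part~(1) of Lemma~\ref{lemma:Porderfacts} already gives ${\rm in}_\prec(qpq_1)\preceq{\rm in}_\prec(qpq_2)$, so it suffices to rule out equality. If equality held, part~(2) of the lemma, applied with $f=q_1$ and $g=q_2$, would force $qpq_1=qpq_2=0$, since ${\rm in}_\prec(q_1)=q_1\ne q_2={\rm in}_\prec(q_2)$. But $p$ is injective by part~(5), so $p(q_1)\ne 0$; writing $p(q_1)$ as a nonempty $\K$-linear combination of distinct monomials $n_j\in Q$ with nonzero coefficients $d_j$, left-cancellativity (part~(3)) makes the monomials $qn_j$ pairwise distinct, so $qpq_1=\sum_j d_j(qn_j)\ne 0$, a contradiction. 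Hence the inequality is strict.

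For the converse, I would assume the displayed implication and fix $q\in Q$, $p\in P$, and $0\ne f\in\K[Q]$ (the case $f=0$ being immediate, as both sides are $0$). Write $f=\sum_{i=1}^s c_iq_i$ with the $q_i\in Q$ distinct, $c_i\ne 0$, and $q_1={\rm in}_\prec(f)$, so that $q_1\succ q_i$ for all $i\ge 2$; since $p$ is a $\K$-algebra endomorphism, the module action gives $qp\cdot f=q\cdot p(f)=\sum_{i=1}^s c_i\,qpq_i$ in $\K[Q]$. When $s=1$ both sides of the $P$-order identity are visibly ${\rm in}_\prec(qpq_1)$, so I would assume $s\ge 2$. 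Then the hypothesis gives ${\rm in}_\prec(qpq_1)\succ{\rm in}_\prec(qpq_2)\succeq 0$, so $\mu:={\rm in}_\prec(qpq_1)$ is a genuine monomial. The crucial point is that $\mu$ occurs in none of the $qpq_i$ with $i\ge 2$, since every monomial of $qpq_i$ is $\preceq{\rm in}_\prec(qpq_i)\prec\mu$ by hypothesis. Consequently the coefficient of $\mu$ in $\sum_i c_i\,qpq_i$ equals $c_1$ times the (nonzero) coefficient of $\mu$ in $qpq_1$, hence is nonzero, while every monomial occurring in the sum at all is $\preceq\mu$. Therefore ${\rm in}_\prec(qp\cdot f)=\mu={\rm in}_\prec(qpq_1)={\rm in}_\prec(qp\cdot{\rm in}_\prec(f))$, which is precisely the defining property of a $P$-order.

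The only delicate point, and thus the main obstacle, lies in the converse: because the endomorphisms in $P$ send monomials to honest polynomials, one cannot argue monomial-by-monomial but must instead control the leading monomial of each summand $qpq_i$ and check that $\mu={\rm in}_\prec(qpq_1)$ is not killed by cancellation in the sum. This is exactly where the \emph{strict} inequality of the hypothesis is used — the weak inequality of Lemma~\ref{lemma:Porderfacts}(1) would not suffice — since it pins $\mu$ strictly above every monomial of every other summand.
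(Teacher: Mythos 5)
Your proof is correct and follows essentially the same route as the paper: the forward direction is the contrapositive argument via parts (1), (2), (3), and (5) of Lemma~\ref{lemma:Porderfacts}, and the converse orders the monomials of $f$ and uses the strict inequalities to identify ${\rm in}_\prec(qpf)$. The only difference is that you spell out the details the paper leaves implicit (why $qpq_1\neq 0$ and why no cancellation can kill $\mu$), which is a welcome but not substantively different elaboration.
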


\begin{proof}
Suppose first that $\preceq$ is a $P$-order.  By Lemma \ref{lemma:Porderfacts} part (1) we know that $q_{1} \prec q_{2}$ implies that ${\rm in}_{\prec}(qpq_1) \preceq {\rm in}_{\prec}(qpq_2)$.
   If ${\rm in}_{\prec}(qpq_1) = {\rm in}_{\prec}(qpq_2)$ for some $q \in Q$, $p \in P$, and $q_1,q_2 \in Q$, then Lemma \ref{lemma:Porderfacts} part (2) implies that $q_{1} = {\rm in}_{\prec}(q_{1}) = {\rm in}_{\prec}(q_{2}) = q_{2}$ or $qpq_1 = qpq_2 = 0$, and part (5) implies that the second option is not possible.  This proves the only-if direction. 

Conversely, suppose that $\preceq$ is a well-ordering of $Q$ satisfying the hypothesis of the proposition.  Let $q \in Q$, $p \in P$, and $0 \neq f \in \K[Q]$; we shall verify that ${\rm in}_{\prec}(qpf) = {\rm in}_{\prec}( qp \cdot {\rm in}_{\prec}(f))$.   Order the monomials $q_1 \prec \cdots \prec q_k$ appearing in $f$ with nonzero coefficient.  By assumption, we have ${\rm in}_{\prec}(qp q_{i})  \prec {\rm in}_{\prec}(qp q_{i+1})$ for all $i$.  It follows that ${\rm in}_{\prec}(qpf) = {\rm in}_{\prec}( qp \cdot {\rm in}_{\prec}(f))$ as desired.
\end{proof}

Having a $P$-order is quite restrictive as the following example demonstrates.

\begin{ex}[Semigroup ring without a $P$-order]

Let $\K[Q] = \K[X_\P]$ be the polynomial ring in infinitely many variables $X_{\P} = \{x_i: i \in \P \}$.  Also, let $P = \S_\P$ be the permutations of the positive integers $\P$, and let $\S_\P$ act on $\K[X_\P]$ by permuting indices.  Then there is no $\S_\P$-order on $\K[X_\P]$.  To see this, let $g = x_1 + x_2$, and suppose (without loss of generality) that a $P$-order makes ${\rm in}_\prec (g) = x_1$.  Then if $p = (12)$, we have  ${\rm in}_\prec(p \cdot g) = {\rm in}_\prec(g) = x_1$, while ${\rm in}_\prec(p \cdot {\rm in}_\prec (g)) = {\rm in}_\prec(p \cdot x_1) = x_2$.

More generally, if $R = \K[Q] \ast P$ where $P$ is a nontrivial group acting by permutations on $Q$, then there cannot exist a $P$-order on $\K[Q]$.  This will necessitate our study of special classes of monoids $P$.  \qed
\end{ex}

Before formulating a theory of Gr\"obner bases in this setting, we shall also need a relation (refining monomial divisibility) that is compatible with the canceling of leading monomials.  

\begin{defn}[The $P$-divisibility relation]\label{defn:Pdivis}
Given monomials $q_1,q_2 \in Q$, we say that $q_1  \, |_P \,   q_2$ if there exists $p \in P$ and $q \in Q$  such that $q_2 = q  \cdot {\rm in}_\prec(p q_1)$.  Such a $p$ is called a \textit{witness} for the relation $q_1  \, |_P \,  q_2$.
\end{defn}


\begin{prop}\label{lemma:Pdivispartialorder}
If $\preceq$ is a $P$-order on $Q$, then $P$-divisibility $ |_P $ is a partial order on $Q$ that is a coarsening of $\preceq$ (i.e., $q_1 \, |_P \,  q_2 \Longrightarrow q_1 \preceq q_2$).
\end{prop}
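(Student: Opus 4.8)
The plan is to verify the three defining properties of a partial order — reflexivity, antisymmetry, and transitivity — directly from the definition of $|_P$, using the facts about $P$-orders collected in Lemma \ref{lemma:Porderfacts}, and simultaneously to establish the coarsening claim $q_1 \, |_P \, q_2 \Longrightarrow q_1 \preceq q_2$, since that inequality will be the engine driving antisymmetry.

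First I would prove the coarsening statement, as it is both the easiest piece and the one the others rely on. If $q_1 \, |_P \, q_2$ with witness $p \in P$ and cofactor $q \in Q$, then $q_2 = q \cdot {\rm in}_\prec(p q_1)$. By Lemma \ref{lemma:Porderfacts}(6), $q_1 \preceq {\rm in}_\prec(p q_1)$, and by Lemma \ref{lemma:Porderfacts}(4), ${\rm in}_\prec(p q_1) \preceq q \cdot {\rm in}_\prec(p q_1) = q_2$. Chaining these gives $q_1 \preceq q_2$. For reflexivity, take $p = \mathbf{1}$ and $q = 1$: then ${\rm in}_\prec(\mathbf{1}\cdot q_1) = q_1$ and $1 \cdot q_1 = q_1$, so $q_1 \, |_P \, q_1$. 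Antisymmetry is then immediate: if $q_1 \, |_P \, q_2$ and $q_2 \, |_P \, q_1$, the coarsening gives $q_1 \preceq q_2$ and $q_2 \preceq q_1$, hence $q_1 = q_2$ since $\preceq$ is a total order.

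The main work is transitivity, and this is the step I expect to be the principal obstacle. Suppose $q_1 \, |_P \, q_2$ via $q_2 = q \cdot {\rm in}_\prec(p q_1)$ and $q_2 \, |_P \, q_3$ via $q_3 = q' \cdot {\rm in}_\prec(p' q_2)$. Substituting, $q_3 = q' \cdot {\rm in}_\prec\!\big(p' \cdot (q \cdot {\rm in}_\prec(p q_1))\big)$. Since $p'$ is a $\K$-algebra endomorphism, $p'(q \cdot {\rm in}_\prec(p q_1)) = (p'q)\cdot(p' \cdot {\rm in}_\prec(p q_1))$, so I need to understand ${\rm in}_\prec\!\big((p'q) \cdot (p' \cdot {\rm in}_\prec(p q_1))\big)$. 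Here the $P$-order axiom applies twice: ${\rm in}_\prec\big((p'q)p' \cdot {\rm in}_\prec(p q_1)\big) = {\rm in}_\prec\big((p'q)p' \cdot p q_1\big)$ by Definition \ref{defn:order} applied with the monomial $p'q$ and endomorphism $p'$ to the element $f = p q_1$ — wait, one must be careful that $p'q$ is a monomial of $Q$ and $p'$ is a single endomorphism, so ${\rm in}_\prec((p'q)\, p' \cdot f) = {\rm in}_\prec((p'q)\,p'\cdot {\rm in}_\prec(f))$ is exactly the defining identity with $q \rightsquigarrow p'q$, $p \rightsquigarrow p'$, $f \rightsquigarrow p q_1$. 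Thus ${\rm in}_\prec\big((p'q)p' \cdot p q_1\big)$, and since $p' p \in P$ (as $P$ is closed under composition), writing $q'' = p'q$ and $p'' = p'\circ p$ we get $q_3 = q' \cdot q'' \cdot {\rm in}_\prec(p'' q_1)$ with $q' q'' \in Q$ and $p'' \in P$. Hence $q_1 \, |_P \, q_3$ with witness $p''$, completing transitivity.

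One subtlety I want to flag: the manipulation ${\rm in}_\prec\big((p'q) \cdot (p' \cdot {\rm in}_\prec(p q_1))\big) = (p'q)\cdot {\rm in}_\prec\big(p' \cdot {\rm in}_\prec(p q_1)\big)$ needs that ${\rm in}_\prec$ of a product of a monomial with a (possibly non-monomial) image is the monomial times the ${\rm in}_\prec$ of the image — but this is again just the $P$-order axiom with trivial endomorphism applied to $p'\cdot {\rm in}_\prec(p q_1)$, or more directly Lemma \ref{lemma:Porderfacts}(1) combined with the fact that $p'\cdot {\rm in}_\prec(p q_1)$ has a well-defined leading monomial. I would present this bookkeeping carefully but briskly, since each individual invocation of Definition \ref{defn:order} or Lemma \ref{lemma:Porderfacts} is routine; the only real content is assembling them in the right order and checking that all the $q$'s stay in $Q$ and all the $p$'s stay in $P$.
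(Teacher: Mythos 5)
Your reflexivity, antisymmetry, and coarsening arguments coincide with the paper's (coarsening via Lemma \ref{lemma:Porderfacts}(4) and (6), antisymmetry from totality of $\preceq$). The genuine gap is in transitivity, at precisely the point you flagged and then waved through: you assert that $p'q$ is a monomial of $Q$, and you rely on this twice --- first to invoke Definition \ref{defn:order} with $p'q$ playing the role of the monomial factor, and second to conclude that the cofactor $q'\cdot(p'q)$ lies in $Q$, which the definition of $q_1\,|_P\,q_3$ requires. But $P$ is merely a monoid of $\K$-algebra endomorphisms of $\K[Q]$; its elements need not map $Q$ into $Q$ (the paper stresses this in the remark on initial final segments, and only in later applications restricts to monomial maps). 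So in general $p'q$ is a polynomial, Definition \ref{defn:order} does not apply with $p'q$ in place of $q$, and $q'\cdot(p'q)$ is not a legitimate witness cofactor. (If every element of $P$ sends monomials to scalar multiples of monomials, your computation is essentially fine, since scalars do not affect ${\rm in}_\prec$; but the proposition is stated for arbitrary $P$.)

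The paper's proof is built to avoid exactly this: it never extracts $p_2m_1$ (your $p'q$) from under the initial-term operator. It rewrites $q_3 = m_2\cdot{\rm in}_\prec\bigl(p_2m_1\cdot(p_2\cdot{\rm in}_\prec(p_1q_1))\bigr) = m_2\cdot{\rm in}_\prec\bigl(p_2m_1\cdot{\rm in}_\prec(p_2p_1q_1)\bigr)$, and then observes that every monomial occurring in the product of the polynomial $p_2m_1$ with the single monomial ${\rm in}_\prec(p_2p_1q_1)$ has the form $q\cdot{\rm in}_\prec(p_2p_1q_1)$ for some monomial $q$ of $p_2m_1$; in particular the leading monomial does, giving $q_1\,|_P\,q_3$ with witness $p_2p_1$ and a cofactor genuinely in $Q$. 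Your argument can be repaired along the same lines: do not pull $p'q$ outside ${\rm in}_\prec$; instead extract the monomial cofactor from the leading term of $(p'q)\cdot{\rm in}_\prec(p'p\,q_1)$, and absorb the outer monomial $q'$ using Proposition \ref{prop:orderchar} with $p=\mathbf{1}$ (multiplication by a monomial is strictly order-preserving, so $q'\cdot{\rm in}_\prec(h)={\rm in}_\prec(q'h)$). As written, though, the step ``$q''=p'q\in Q$'' is false in the generality of the statement, so the transitivity argument does not go through.
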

\begin{proof}
First of all, it is clear that $\, |_P \, $ is reflexive.  To prove transitivity, suppose that $q_2 = m_1 \cdot {\rm in}_\prec(p_1 q_1)$  and $q_3 = m_2 \cdot {\rm in}_\prec(p_2 q_2)$ for monomials $m_1,m_2 \in Q$ and $p_1,p_2 \in P$.  Using the fact that $p_2$ is a ring homomorphism and (repeatedly) the defining property of $P$-orders, we have,
\begin{equation*}
\begin{split}
 q_3 =  \  & m_2 \cdot {\rm in}_\prec(p_2 m_1 \cdot (p_2 \cdot {\rm in}_\prec(p_1 q_1))) \\
 = \ & m_2 \cdot {\rm in}_\prec(p_2 m_1 \cdot {\rm in}_\prec (p_2 \cdot  {\rm in}_\prec(p_1 q_1))) \\
 = \ & m_2 \cdot {\rm in}_\prec(p_2 m_1 \cdot {\rm in}_\prec (p_2p_1 q_1)). \\
\end{split}
\end{equation*}
Since ${\rm in}_\prec(p_2 m_1 \cdot {\rm in}_\prec (p_2p_1 q_1)) \neq 0$,  it must be of the form $q \cdot {\rm in}_\prec (p_2p_1 q_1)$ for some $q \in Q$.  It follows that $q_1\, |_P \,q_3$ with witness $p = p_2 p_1$.

Finally, to prove antisymmetry, it is enough to verify that $P$-divisibility is a coarsening of $\preceq$.   If $q_1 \, |_P \,  q_2$, then for some $p \in P$ and $q \in Q$, we have $q_2 = q \cdot  {\rm in}_\prec(pq_1)$.  Thus, by  properties (4) and (6) in Lemma \ref{lemma:Porderfacts}, we have $q_1 \preceq  {\rm in}_\prec(p q_1) \preceq q  \cdot {\rm in}_\prec(pq_1) = q_2$ as desired.
\end{proof}

If $\preceq$ is a $P$-order, then we may compute the \textit{initial final segment} with respect to the $P$-divisibility partial order of any subset $G \subseteq \K[Q]$:
\[{\rm in}_\prec(G) :=  \left \{  q : {\rm in}_\prec (g) \, |_P \, q  \text{ for some } g \in G \setminus \{0 \} \right \}.\]
It is clear that the set ${\rm in}_\prec(G)$ contains all the initial monomials of $G$. Moreover, when $I \subseteq \K[Q]$ is a $P$-invariant ideal, it is straightforward to check that it contains no other ones: \[ {\rm in}_\prec(I) =  \left \{  {\rm in}_\prec (f) :  f \in I \setminus \{0\}  \right \}.\]

\begin{rmk}
The schizophrenic terminology \emph{initial final segment} comes from the combination of two mathematical traditions.  From order theory, we have an upward closed subset of a partially ordered set, which is a final segment.  On the other hand, we have constructed this set by taking initial or leading terms of polynomials.

Note that the initial final segment is not an ideal (or initial segment) in the sense of order theory (as it is not closed downward).  Furthermore, it cannot, in general,  be made into a monomial ideal of $\K[Q]$, as is typically done in commutative algebra, because $P$ does not necessarily act by maps that send $Q$ to itself.
\end{rmk}

We now arrive at our definition of Gr\"obner bases for invariant ideals with respect to a given $P$-order.  We remark that a similar definition appears in \cite{BrouwerDraisma}, where they are given the name \textit{equivariant Gr\"obner bases}, and \cite{Drensky06} contains related work
in the noncommutative case (but without the assumption that the term order is compatible with the monoid actions).  

\begin{defn}\label{GBdef}
A set $G \subseteq I \subseteq \K[Q]$ is a $P$-\textit{Gr\"obner basis} for a $P$-invariant ideal $I$ (with respect to the $P$-order $\preceq$) if and only if
$$ {\rm in}_\prec(I) =  {\rm in}_\prec(G).$$
\end{defn}

Of course, the set $I$ can itself be considered a Gr\"obner basis for the ideal $I$, so the interest theoretically and computationally is when a finite Gr\"obner basis exists.  One goal of this section is to arrive at a criterion for $\preceq$ guaranteeing that finite $P$-Gr\"obner bases exist for all $P$-invariant $I$. 

In analogy with the classical case, a $P$-Gr\"obner basis generates the ideal up to the action of $P$.  Here, for an $R$-module $M$ and a subset $G \subseteq M$, the submodule $\<G\>_{R} \subseteq M$ is the $R$-module generated by $G$.

\begin{prop}\label{prop:GBgenerate}
If $G$ is a $P$-Gr\"obner basis for a $P$-invariant ideal $I \subseteq \K[Q]$, then \[ I = \<G\>_{\K[Q] \ast P}.\]
\end{prop}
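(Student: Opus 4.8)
The plan is to prove the inclusion $\langle G\rangle_{\K[Q]\ast P}\subseteq I$ first (the easy direction) and then the reverse inclusion $I\subseteq\langle G\rangle_{\K[Q]\ast P}$ by a standard Gr\"obner-style division/reduction argument using the well-ordering $\preceq$. For the easy direction, since $G\subseteq I$ and $I$ is a (left) ideal that is $P$-invariant, every element $qp\cdot g$ with $q\in Q$, $p\in P$, $g\in G$ lies in $I$: indeed $p\cdot g\in PI=I$ because $I$ is $P$-invariant, and then $q\cdot(p\cdot g)\in I$ because $I$ is an ideal; taking $\K$-linear combinations stays in $I$. Hence $\langle G\rangle_{\K[Q]\ast P}\subseteq I$.

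For the reverse inclusion, suppose for contradiction that $I\setminus\langle G\rangle_{\K[Q]\ast P}$ is nonempty, and among its elements choose one, say $f$, whose leading monomial ${\rm in}_\prec(f)$ is $\preceq$-minimal; this is possible because $\preceq$ is a well-ordering. Set $q_0={\rm in}_\prec(f)\in{\rm in}_\prec(I)={\rm in}_\prec(G)$, so by Definition \ref{GBdef} there is a nonzero $g\in G$ with ${\rm in}_\prec(g)\,|_P\,q_0$. By Definition \ref{defn:Pdivis} this means $q_0=q\cdot{\rm in}_\prec(pg)$ for some $q\in Q$ and witness $p\in P$, and since $\preceq$ is a $P$-order we have ${\rm in}_\prec(qp\cdot g)={\rm in}_\prec(qp\cdot{\rm in}_\prec(g))=q\cdot{\rm in}_\prec(pg)=q_0$, where the middle equality uses that $p$ is a ring homomorphism so $p\cdot{\rm in}_\prec(g)$ is itself a monomial and hence $qp\cdot{\rm in}_\prec(g)=q\cdot{\rm in}_\prec(pg)$. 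Thus the element $h:=qp\cdot g\in\langle G\rangle_{\K[Q]\ast P}$ has the same leading monomial $q_0$ as $f$; scaling by a suitable nonzero $c\in\K$ (the ratio of leading coefficients), the difference $f':=f-ch$ either is zero or has ${\rm in}_\prec(f')\prec q_0$. Since $f\in I$ and $h\in\langle G\rangle_{\K[Q]\ast P}\subseteq I$, we get $f'\in I$. If $f'=0$ then $f=ch\in\langle G\rangle_{\K[Q]\ast P}$, contradicting the choice of $f$. Otherwise ${\rm in}_\prec(f')\prec q_0$ forces, by minimality of $q_0$, that $f'\in\langle G\rangle_{\K[Q]\ast P}$; but then $f=ch+f'\in\langle G\rangle_{\K[Q]\ast P}$, again a contradiction. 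Hence $I\subseteq\langle G\rangle_{\K[Q]\ast P}$, completing the proof.

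The only real subtlety — and the step I would be most careful about — is verifying the leading-monomial bookkeeping with the skew-monoid action: specifically that $qp\cdot g$ has leading monomial exactly $q_0$, which rests on the defining property of $P$-orders (Definition \ref{defn:order}), on the fact that $p$ maps the monomial ${\rm in}_\prec(g)$ to a monomial (so that $q\cdot{\rm in}_\prec(pg)$ is unambiguous and nonzero, using injectivity of $p$ from Lemma \ref{lemma:Porderfacts}(5)), and on left-cancellativity of $Q$ (Lemma \ref{lemma:Porderfacts}(3)) to keep coefficients well-defined. Everything else is the classical division argument transcribed verbatim into this setting, made legitimate precisely by the $P$-order axiom ensuring leading terms behave multiplicatively under the $\K[Q]\ast P$-action.
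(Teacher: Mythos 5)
Your argument is, in substance, the same as the paper's: the easy inclusion follows from $P$-invariance, and the reverse inclusion is the standard reduction by leading terms, terminating because $\preceq$ is a well-order. The paper runs the descent forward (iteratively subtracting $c_iq_ip_ig_i$ until the remainder is zero), whereas you phrase it as a minimal-counterexample argument; these are interchangeable.

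One justification in your ``subtle step'' is not valid in the paper's generality, though your conclusion survives. You assert that $p\cdot{\rm in}_\prec(g)$ is itself a monomial ``since $p$ is a ring homomorphism,'' but elements of $P$ are arbitrary $\K$-algebra endomorphisms of $\K[Q]$ and need not send monomials to (scalar multiples of) monomials --- the paper makes exactly this point in the remark following Theorem~\ref{main_GB_thm}, where monomial maps are singled out as a special hypothesis for the converse direction. So $qp\cdot{\rm in}_\prec(g)$ may be a genuine polynomial. What you actually need is the identity ${\rm in}_\prec\bigl(q\,p\cdot{\rm in}_\prec(g)\bigr)=q\cdot{\rm in}_\prec\bigl(p\cdot{\rm in}_\prec(g)\bigr)$, and this holds without any monomial-map assumption: multiplication by the monomial $q$ strictly preserves $\preceq$ on monomials (Proposition~\ref{prop:orderchar} applied with $p=\mathbf{1}$), so taking initial terms commutes with multiplication by $q$; combining this with the $P$-order axiom of Definition~\ref{defn:order} gives ${\rm in}_\prec(qp\cdot g)=q\cdot{\rm in}_\prec\bigl(p\cdot{\rm in}_\prec(g)\bigr)=q_0$, which is all your cancellation step requires. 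With that repair the proof is complete and matches the paper's.
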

\begin{proof}
Since $I$ is $P$-invariant, we have $\<G\>_{\K[Q] \ast P} \subseteq I$.  Conversely, given $f_1 \in I$, we shall prove $f \in \<G\>_{\K[Q] \ast P}$.  Since ${\rm in}_\prec(f_1) \in {\rm in}_\prec(I) = {\rm in}_\prec(G)$, there exist $q_1 \in Q$, $p_1 \in P$, and $g_1 \in G$ such that ${\rm in}_\prec(f_1) =   {\rm in}_\prec(q_1 p_1 g_1)$.  Thus, for some $c_1 \in \K$, the element \[f_2 := f_1 - c_1 q_1 p_1 g_1\] is either zero or has a smaller initial monomial than ${\rm in}_\prec(f_1)$.  Also, $f_2 \in I$, so there are $q_2 \in Q$, $p_2 \in Q$, and $g_2 \in G$ such that ${\rm in}_\prec(f_2) =   {\rm in}_\prec(q_2 p_2g_2)$.  As before, we define a new polynomial $f_3 := f_2 - c_2 q_2 p_2g_2$, which again is zero or has a smaller initial term.  Continuing in this way, we produce a sequence of polynomials $f_1, f_2, f_3, \ldots \in I$ all of whose initial terms form an infinite decreasing sequence.  Since $\preceq$ is a well-order, this sequence must terminate in a finite number of steps with some $f_{k+1} = 0$.  But then we have that $f_1 = \sum_{i =1}^k c_i q_i p_i g_i$ with the $g_i \in G$.   This proves the proposition.\end{proof}

If $P$-divisibility $\, |_P \,$ generates enough relations between elements of $Q$, then finite Gr\"obner bases for $P$-invariant ideals always exist. To state this result precisely, however, we need to introduce some basic definitions from order theory.  

Recall that a \textit{well-partial-ordering} $\leq$ on a set $S$ is a partial order such that (1) there are no infinite collections of pairwise incomparable elements (i.e., \emph{antichains}) and (2) there are no infinite strictly decreasing sequences.  This definition is a natural generalization of the notion of ``well-ordering" when $\leq$ is not total.  A \textit{final segment} is a subset $F \subseteq S$ which is closed upwards: $s \leq t \text{ and } s \in F \Rightarrow t \in F$ for all $s,t \in S$.  Given a subset $B \subseteq S$, the set \[\mathcal{F} (B) := \bigl\{t \in S :  b \leq t \text{ for some } b \in B  \bigr\}\] is a final segment of $S$, the {\it final segment generated by $B$.}  For example, with $P$-divisibility $\, |_P \,$ as the partial order, the set of monomials ${\rm in}_\prec(G)$ is a final segment generated by the initial monomials of $G$.  Thus, another way to state Definition \ref{GBdef} is to say that a subset $G \subseteq I$ is a $P$-Gr\"obner basis of $I$ if and only if the final segment generated by the leading monomials of $G$ contains all the leading monomials of $I$.  

Continuing further with order terminology, let us call an infinite sequence $s_1,s_2,\dots$ in $S$ {\it good}\/ if $s_i \leq s_j$ for some indices $i<j$, and {\it bad} otherwise.  The following elementary characterization of well-partial-orderings is classical \cite{Kruskal}.

\begin{prop}\label{prop:equivorder}
The following are equivalent for a partial order $\leq$ on a set $S$:
\begin{enumerate}
\item $S$ is well-partially-ordered.
\item Every infinite sequence in $S$ is good.
\item Every infinite sequence in $S$ contains an infinite increasing
  subsequence.
\item Any final segment of $S$ is finitely generated.
\item The ascending chain condition holds for final segments of $S$.
\end{enumerate}
\end{prop}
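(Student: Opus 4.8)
The plan is to prove the five conditions equivalent by a single cycle of implications $(1)\Rightarrow(4)\Rightarrow(5)\Rightarrow(2)\Rightarrow(3)\Rightarrow(1)$, arranged so that every link is elementary; in particular this route reaches $(2)$ from $(5)$ rather than from $(1)$, which lets us sidestep the Ramsey-theoretic argument a direct proof of $(1)\Rightarrow(2)$ would invoke.

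For $(1)\Rightarrow(4)$: given a final segment $F$, let $M$ be its set of $\leq$-minimal elements. Then $M$ is an antichain, hence finite by the no-infinite-antichain clause of well-partial-ordering, and each $f\in F$ satisfies $m\leq f$ for some $m\in M$ --- otherwise one builds, by dependent choice, an infinite strictly decreasing sequence in $F$, contradicting the other clause --- so $F=\mathcal{F}(M)$ is finitely generated. For $(4)\Rightarrow(5)$: given an ascending chain $F_1\subseteq F_2\subseteq\cdots$ of final segments, the union $F=\bigcup_n F_n$ is again a final segment, so $F=\mathcal{F}(B)$ for some finite $B\subseteq F$; each element of $B$ lies in some $F_n$, hence $B\subseteq F_N$ for $N$ the maximum of those finitely many indices, and since $F_N$ is a final segment this forces $F\subseteq F_N$, so the chain is eventually constant.

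For $(5)\Rightarrow(2)$: if $(s_i)_{i\geq 1}$ were a bad sequence, then the final segments $F_n:=\mathcal{F}(\{s_1,\dots,s_n\})$ would form a strictly ascending chain --- indeed $s_{n+1}\in F_{n+1}$, while $s_{n+1}\in F_n$ would give $s_k\leq s_{n+1}$ for some $k\leq n$, contradicting badness --- violating the ascending chain condition. For $(2)\Rightarrow(3)$: call an index $i$ \emph{terminal} if $s_i\not\leq s_j$ for all $j>i$; the terminal indices form a finite set, else they index a bad subsequence, and past the largest terminal index every term is dominated by some later term, so repeatedly choosing a dominating successor produces an infinite weakly increasing subsequence. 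Finally $(3)\Rightarrow(1)$ is immediate: an infinite antichain, listed as a sequence, admits no increasing pair, and an infinite strictly decreasing sequence admits no increasing subsequence at all, so $(3)$ rules out both obstructions to being well-partially-ordered.

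Since the statement is classical there is no genuine obstacle; the one point of care is that the implication one is tempted to prove first, $(1)\Rightarrow(2)$, naturally passes through Ramsey's theorem (two-color the pairs of a putative bad sequence as incomparable versus strictly decreasing), and the cycle above is designed precisely to avoid that. The only other bookkeeping worth flagging is the harmless use of dependent choice to extract the descending and ascending subsequences in $(1)\Rightarrow(4)$ and $(2)\Rightarrow(3)$, and, in $(4)\Rightarrow(5)$, the observation that a finite generating set of the union may be taken inside the union.
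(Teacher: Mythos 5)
Your proof is correct, but there is nothing in the paper to compare it against: the authors do not prove Proposition \ref{prop:equivorder} at all, stating it as a classical fact and citing Kruskal's survey on well-quasi-ordering. So your argument is supplying standard background material rather than replacing an argument of the paper. As such it is a perfectly good self-contained proof: the cycle $(1)\Rightarrow(4)\Rightarrow(5)\Rightarrow(2)\Rightarrow(3)\Rightarrow(1)$ closes, and each link checks out. In $(1)\Rightarrow(4)$ the set $M$ of minimal elements of $F$ is indeed a finite antichain, and the descending-chain argument showing every $f\in F$ lies above some element of $M$ works because an element of $F$ not above any minimal element is itself non-minimal and everything strictly below it inherits that property. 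In $(4)\Rightarrow(5)$ the point you flag is automatic: since $\leq$ is reflexive, $B\subseteq\mathcal{F}(B)$ always, so any finite generating set of the union already lies in the union and hence in some $F_N$. The $(5)\Rightarrow(2)$ step via the strictly ascending chain $\mathcal{F}(\{s_1,\dots,s_n\})$ and the $(2)\Rightarrow(3)$ step via finitely many ``terminal'' indices are the standard devices, and together they do, as you intend, avoid any appeal to Ramsey's theorem (which a direct $(1)\Rightarrow(2)$ or $(2)\Rightarrow(3)$-via-two-colouring argument would use); the only price is the routine use of dependent choice, which is unavoidable for this statement anyway. One small wording caution: ``increasing subsequence'' in $(3)$ should be read as weakly increasing ($s_{i_1}\preceq s_{i_2}\preceq\cdots$), which is what your construction produces and what the classical statement means.
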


We now have all the ingredients to prove that finite $P$-Gr\"obner bases exist when $P$-divisibility is a well-partial-ordering (our finiteness criterion).  In the case that $Q = \N^k$, $P = \{\textbf{1}\}$, and $\preceq$ is any term order on $Q$, the theorem says that a finite Gr\"obner basis exists if monomial divisibility is a well-partial-order.  As this is the basic content of Dickson's Lemma, we recover the classical finiteness result for Gr\"obner bases in polynomial rings with a finite number of variables.  

\begin{thm}\label{main_GB_thm}
Let $\preceq$ be a $P$-order.  If $P$-divisibility $\, |_P \,$  is a well-partial-ordering, then every $P$-invariant ideal $I \subseteq \K[Q]$ has a finite $P$-Gr\"obner basis with respect to $\preceq$.  Moreover, if elements of $P$ send monomials to scalar multiples of monomials, the converse holds.
\end{thm}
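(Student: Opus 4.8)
The plan is to prove the two implications separately: the forward direction will fall out quickly from the order-theoretic machinery already assembled (Propositions~\ref{lemma:Pdivispartialorder} and~\ref{prop:equivorder}), while the converse will require building an explicit $P$-invariant ideal out of a pathological final segment.

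For the forward direction, I would start with an arbitrary $P$-invariant ideal $I \subseteq \K[Q]$ and observe that ${\rm in}_\prec(I)$ — which, as noted just before the theorem, coincides both with ${\rm in}_\prec(G)$ for $G = I$ and with the bare set of leading monomials of $I$ — is a final segment of $(Q,|_P)$. Since $|_P$ is assumed to be a well-partial-ordering, Proposition~\ref{prop:equivorder}(4) gives that this final segment is generated by finitely many elements $q_1,\dots,q_n$, and I would choose $g_i \in I \setminus \{0\}$ with ${\rm in}_\prec(g_i) = q_i$. Then $G := \{g_1,\dots,g_n\} \subseteq I$ satisfies ${\rm in}_\prec(G) = \mathcal{F}(\{q_1,\dots,q_n\}) = {\rm in}_\prec(I)$, so it is the required finite $P$-Gr\"obner basis.

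For the converse, I would assume that every $p \in P$ sends each monomial of $\K[Q]$ to a scalar multiple of a monomial, assume for contradiction that $|_P$ is \emph{not} a well-partial-ordering (its only possible defect is an infinite antichain, since $|_P$ coarsens the well-order $\preceq$), and, via Proposition~\ref{prop:equivorder}, extract a final segment $F$ of $(Q,|_P)$ that is not finitely generated. The candidate ideal is $I := \bigoplus_{q \in F} \K q$, the $\K$-span of $F$. I would then check $I$ is a $P$-invariant ideal: for $q_0 \in Q$ and $q \in F$ one has $q \, |_P \, q_0 q$ (witness $\mathbf{1}$), so $q_0 q \in F$ and $q_0 I \subseteq I$; and for $p \in P$, writing $pq = c \cdot {\rm in}_\prec(pq)$ with $c \in \K$ by hypothesis, one has $q \, |_P \, {\rm in}_\prec(pq)$ (witness $p$, with the monomial factor $1$), so ${\rm in}_\prec(pq) \in F$ and $pq \in I$. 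Since $F$ is already a final segment, ${\rm in}_\prec(I) = \mathcal{F}(F) = F$; hence if $I$ had a finite $P$-Gr\"obner basis $G$, then $F = {\rm in}_\prec(I) = {\rm in}_\prec(G)$ would be finitely generated, a contradiction.

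I expect the only real obstacle to lie in the converse, and it is conceptual rather than computational: the $\K$-span of a final segment $F$ is $P$-invariant \emph{precisely} because $P$ acts by scalar-times-monomial maps. If $pq$ were allowed to be a genuine polynomial $\sum_i c_i m_i$, only its leading monomial is forced into $F$, the lower-order terms need not be, and the construction collapses — which is exactly why the extra hypothesis enters the statement. Everything in the forward direction — in particular that ${\rm in}_\prec(I)$ is genuinely upward closed under $|_P$ — is immediate from Proposition~\ref{lemma:Pdivispartialorder} together with the remarks preceding the theorem.
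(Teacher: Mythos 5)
Your proof is correct. The forward direction is exactly the paper's argument: ${\rm in}_\prec(I)$ is a final segment for $\,|_P\,$, Proposition \ref{prop:equivorder}(4) makes it finitely generated, and $P$-invariance of $I$ lets you realize the generators as leading monomials of finitely many elements of $I$. The converse takes a mildly different route. The paper verifies condition (4) of Proposition \ref{prop:equivorder} directly: for an \emph{arbitrary} final segment $M$ it forms $I=\langle M\rangle_{\K[Q]\ast P}$, which is a $P$-invariant ideal with no hypothesis on $P$, takes a finite $P$-Gr\"obner basis $G$ of $I$, and only then invokes the monomial-map hypothesis, applied to representations $g=\sum_j c_jq_jp_jm_j$ with $m_j\in M$, to conclude that each ${\rm in}_\prec(g)$ is $P$-divisible by some $m\in M$ and hence that $M=\mathcal{F}\left(\{{\rm in}_\prec(g_1),\ldots,{\rm in}_\prec(g_k)\}\right)$. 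You instead argue by contradiction from a single non-finitely-generated final segment $F$ and take $I$ to be the $\K$-span of $F$, spending the monomial-map hypothesis up front to show this span is already a $P$-invariant ideal; then ${\rm in}_\prec(I)=F$ is immediate and no analysis of module representations is needed. Both converses hinge on the same equivalence from Proposition \ref{prop:equivorder}; yours is a bit more economical because the ideal is monomial by construction, while the paper's is a bit more uniform because its ideal construction does not depend on the extra hypothesis on $P$ (only the final step does). Your parenthetical remark that the only possible failure of the well-partial-order property is an infinite antichain, since $\,|_P\,$ coarsens the well-order $\preceq$, is correct but not needed for the argument.
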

\begin{proof}
The set of monomials ${\rm in}_{\prec}(I)$ is a final segment with respect to $P$-divisibility; thus, it is finitely generated by Proposition \ref{prop:equivorder}.  These generators are initial monomials of a finite subset $G$ of elements of $I$.  It follows that $G$ is a $P$-Gr\"obner basis.

For the second statement, we verify that (4) holds in the characterization of Proposition \ref{prop:equivorder}.   Let $M$ be any final segment of $Q$ with respect to $\, |_P \,$, and set $I = \<M \>_{\K[Q] \ast P}$.  By assumption, there is a finite set $G = \{g_1,\ldots,g_k\} \subseteq I$ such that  
\[
 M \subseteq {\rm in}_{\prec}( I) = 
 {\rm in}_{\prec}(G) = 
 \mathcal{F} \left( \{{\rm in}_{\prec}(g_1),\ldots,{\rm in}_{\prec}(g_k) \} \right).
 \]
Now, each $g \in G$ has a representation of the form \[ g = \sum_{j = 1}^d c_j q_j p_j m_j, \ \ \ c_j \in \K, \ q_j \in Q, \ p_j \in P, \ m_j \in M,\] and since elements of $P$ send monomials to scalar multiples of monomials, it follows that ${\rm in}_{\prec}(g) = q \cdot {\rm in}_{\prec}(pm)$ for some $q \in Q$, $p \in P$, and $m \in M$.  In particular, we have $m \, |_P \, {\rm in}_{\prec}(g)$.  Thus, $\mathcal{F} \left( \{{\rm in}_{\prec}(g_1),\ldots,{\rm in}_{\prec}(g_k) \} \right) \subseteq M$ and $M$ is finitely generated.
\end{proof}

\begin{rmk}
Define a \textit{monomial map} to be an element $p \in P$ that sends monomials to scalar multiples of monomials.  Theorem \ref{main_GB_thm} says that for a monoid $P$ of monomial maps, $P$-divisibility is a well-partial-ordering if and only if every $P$-invariant ideal has a finite $P$-Gr\"obner basis.   In our applications, the monoids $P$ consist entirely of monomial maps.  However, we do not know if the converse to Theorem \ref{main_GB_thm}  continues to hold when $P$ is a more general set of maps, and it would be interesting to understand this situation better.
\end{rmk}

Using Proposition \ref{prop:GBgenerate}, the following finiteness result is immediate. 

\begin{cor}\label{cor:GBfinitecor}
Let $\preceq$ be a $P$-order.  If $P$-divisibility $\, |_P \,$  is a well-partial-ordering, then every $P$-invariant ideal $I \subseteq \K[Q]$ is finitely generated over $\K[Q] \ast P$.  In other words, $\K[Q]$ is a Noetherian  $\K[Q] \ast P$-module.
\end{cor}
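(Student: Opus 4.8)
The plan is to read this corollary off directly from Theorem \ref{main_GB_thm} and Proposition \ref{prop:GBgenerate}, which between them already contain all the substance. First I would fix an arbitrary $P$-invariant ideal $I \subseteq \K[Q]$. Since by hypothesis $\preceq$ is a $P$-order and $\, |_P \,$ is a well-partial-ordering, Theorem \ref{main_GB_thm} yields a \emph{finite} $P$-Gr\"obner basis $G \subseteq I$ for $I$ with respect to $\preceq$. Proposition \ref{prop:GBgenerate} then gives $I = \langle G \rangle_{\K[Q] \ast P}$, so $I$ is generated as a $\K[Q] \ast P$-module by the finite set $G$. This proves the first assertion for every $P$-invariant $I$ at once.

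For the ``in other words'' reformulation I would invoke the observation recorded just after the definition of $P$-invariance, namely that the $\K[Q] \ast P$-submodules of $\K[Q]$ are precisely the $P$-invariant ideals of $\K[Q]$. Granting that identification, the first assertion states exactly that every submodule of the $\K[Q] \ast P$-module $\K[Q]$ is finitely generated, which is one of the standard equivalent definitions of a Noetherian module (equivalently, the ascending chain condition holds for submodules of $\K[Q]$, paralleling Proposition \ref{prop:equivorder}(5) for final segments).

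The only point deserving a sentence of care is this dictionary between the module-theoretic and ideal-theoretic languages: I would note that a $\K[Q] \ast P$-submodule $N \subseteq \K[Q]$ is automatically closed under multiplication by all of $\K[Q]$, hence is an ideal, and is closed under the action of each $p \in P$, hence is $P$-invariant --- both facts being immediate since copies of $\K[Q]$ and of $P$ sit inside $\K[Q] \ast P$. There is no genuine obstacle here; all of the real content is in the well-partial-order hypothesis, which was already converted into finite generation of final segments via Proposition \ref{prop:equivorder}(4) in the proof of Theorem \ref{main_GB_thm}, and the corollary simply repackages that conclusion.
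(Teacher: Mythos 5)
Your proof is correct and follows the paper's own route exactly: the paper derives this corollary immediately from Theorem \ref{main_GB_thm} together with Proposition \ref{prop:GBgenerate}, just as you do. Your extra remark identifying $P$-invariant ideals with $\K[Q] \ast P$-submodules is the same identification the paper records right after defining $P$-invariance, so nothing further is needed.
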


We next define a  general setup that allows us to go from global generation  to local stabilization (Theorem \ref{thm:symminvarstab}).  This can be seen as an analogue to \cite[Theorem 4.7]{Aschenbrenner2007} which guaranteed stabilization of certain $\frak S_{\P}$-invariant chains over a polynomial ring in an infinite number of indeterminates.  In fact, we shall show in the next section how the stabilization result of \cite{Aschenbrenner2007} follows from our theory.


\begin{defn}[Filtrations]\label{def:filtration}
Let $\preceq$ be a $P$-order, and suppose that $Q_n \subseteq Q$ and $P_{n,m} \subseteq P$ for nonnegative integers $m \geq n$.  We say that  $Q_n$ and $P_{n,m}$ is a \textit{filtration} of $\K[Q] \ast P$ if
\begin{enumerate}
\item Each $Q_n$ is a submonoid of $Q$.
\item $Q_n \subseteq Q_{n+1}$ for all $n$.  
\item $Q = \bigcup_{n}^{\infty} Q_n$ and  $P = \bigcup_{n,m =1}^{\infty} P_{n,m}$.
\item $P_{n,m} Q_n \subseteq \K[Q_m]$ for all $m \geq n$.
\item Each $P_{n,m}$ contains the identity endomorphism.
\item If $q \in Q_n \setminus Q_{n-1}$ and ${\rm in}_{\prec}(p q)  \in Q_m$ for some $p \in P$, then there exists $p' \in P_{n,m}$ with ${\rm in}_{\prec}(p' q) = {\rm in}_{\prec}(p q)$.
\item Each $Q_n$ is an \textit{initial segment} with respect to $\preceq$ (i.e., $u \preceq v$ and $v \in Q_n$ $\Rightarrow u \in Q_n$).  
\end{enumerate}
\end{defn}

\begin{rmk}\label{rmk:filtration}
From Lemma \ref{lemma:Porderfacts}, we have $q_1 \preceq q_1 q_2$ and $q_2 \preceq q_1 q_2$ for any $q_1,q_2 \in Q$.  In particular, if $q_1q_2 \in Q_n$, then (7) above implies that both  $q_1,q_2 \in Q_n$.
\end{rmk}

Our most important example of a filtration arises from decomposing the monoid of increasing functions.  It appears explicitly in the statements of Theorem \ref{thm:pifinite} and Corollary \ref{cor:snfinite}, and will be used to prove the independent set conjecture of  \cite[Conj. 4.6]{Hosten2007} (Theorem \ref{thm:independentset}).

Given a filtration of $\K[Q] \ast P$, we are interested in increasing chains $I_\circ$ of ideals $I_n \subseteq \K[Q_n]$: \[ I_{\circ} :=  I_1 \subseteq I_2 \subseteq \cdots \subseteq I_n \subseteq \cdots,\] simply called {\em chains}\/ below.  Of  primary importance is when these ideals stabilize ``up to the action'' of the monoid $P$.  For the purposes of this work, we will only consider a special class of chains; namely, a \textit{$P$-invariant chain} is one for which $P_{n,m} I_n \subseteq I_{m}$ for all $m \geq n$.  The stabilization definition alluded to above is as follows.

\begin{defn}\label{stabdef}
The $P$-invariant chain $I_\circ$ \textit{stabilizes} if there exists a positive integer $n_0$ such that \[ I_n = \bigcup_{k\leq n_0} \< P_{k,n} I_{k} \>_{\K[Q_n]}  \qquad\text{for all $n \geq n_0$.}\]
\end{defn}

In other words, a $P$-invariant chain $I_{\circ}$ stabilizes when the ideals $I_n$ can be generated by ``lifting" the finite set of ideals $\{I_1,\ldots,I_{n_0}\}$ in the chain. 

Any $P$-invariant chain $I_\circ$ naturally gives rise to an ideal $\mathcal{N}(I_\circ)$ over $\K[Q] \ast P$ by way of \[ \mathcal{N}(I_\circ) := \bigcup_{n \geq 1} I_n.\] 
It is easy to see that if $I_\circ$ stabilizes, then any set of $\K[Q_{n_0}]$-generators for $I_{n_0}$ will form a generating set of the $\K[Q] \ast P$-module $I = \mathcal{N}(I_\circ)$.  Our next result says that one can also move from global generation to chain stabilization; it will be a consequence of the following technical fact.

\begin{lemma}\label{lemma:GBfiltration}
Let $\preceq$ be a $P$-order and fix a filtration of $\K[Q] \ast P$.  Suppose that $G \subseteq \K[Q_{n_0}]$ is a finite $P$-Gr\"obner basis for a $P$-invariant ideal $I \subseteq \K[Q]$.  Then, if $0 \neq f \in \K[Q_n] \cap I$ with $n \geq n_0$, we have,
\[ {\rm in}_\prec(f) =  {\rm in}_\prec(qpg) \ \ \  \text{for some } \ q \in Q_n, \ p \in P_{k,n}, \ g \in G \cap \K[Q_k], \ k \leq n_0.\]
\end{lemma}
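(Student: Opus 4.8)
The plan is to take an arbitrary nonzero $f \in \K[Q_n] \cap I$ with $n \geq n_0$ and use that $G$ is a $P$-Gr\"obner basis to find \emph{some} expression ${\rm in}_\prec(f) = {\rm in}_\prec(q p g)$ with $q \in Q$, $p \in P$, $g \in G$; the real work is then to show this expression can be taken with $q \in Q_n$, $g \in \K[Q_k]$ for some $k \le n_0$, and $p \in P_{k,n}$. First I would invoke the Gr\"obner basis property: ${\rm in}_\prec(f) \in {\rm in}_\prec(I) = {\rm in}_\prec(G)$, so there is $g \in G \setminus \{0\}$, $p \in P$, and $q \in Q$ with ${\rm in}_\prec(f) = q \cdot {\rm in}_\prec(p\, {\rm in}_\prec(g))$, i.e.\ ${\rm in}_\prec(g) \, |_P \, {\rm in}_\prec(f)$ with witness $p$ and cofactor $q$.

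Next I would push everything down into the filtration levels using the fact that each $Q_n$ is an initial segment (axiom (7) of Definition~\ref{def:filtration}) together with Remark~\ref{rmk:filtration}. Since ${\rm in}_\prec(f) \in Q_n$ and ${\rm in}_\prec(f) = q \cdot {\rm in}_\prec(p\, {\rm in}_\prec(g))$, Remark~\ref{rmk:filtration} gives that both $q \in Q_n$ and ${\rm in}_\prec(p\, {\rm in}_\prec(g)) \in Q_n$. Now set ${\rm in}_\prec(g) =: q_0$. We need the level $k$ at which $q_0$ "enters" the filtration: let $k$ be minimal with $q_0 \in Q_k$, so $q_0 \in Q_k \setminus Q_{k-1}$. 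I then need $k \le n_0$: since $G \subseteq \K[Q_{n_0}]$ and $q_0 = {\rm in}_\prec(g)$ is a monomial appearing in $g \in \K[Q_{n_0}]$, we have $q_0 \in Q_{n_0}$, hence $k \le n_0$. Also $k \le n$ since $q_0 \preceq {\rm in}_\prec(p q_0) \in Q_n$ and $Q_n$ is an initial segment; combined with $Q_k \subseteq Q_{k+1} \subseteq \cdots$ this gives $Q_k \subseteq Q_n$, so in particular $g \in \K[Q_k]$ provided \emph{all} monomials of $g$ lie in $Q_k$ — which holds because $q_0 = {\rm in}_\prec(g)$ is the $\preceq$-largest monomial of $g$ and $Q_k$ is an initial segment. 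So I can record $g \in G \cap \K[Q_k]$ with $k \le n_0$.

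The remaining point — and I expect this to be the main obstacle, though axiom (6) is tailor-made for it — is replacing the general witness $p \in P$ by one in $P_{k,n}$. I have $q_0 \in Q_k \setminus Q_{k-1}$ and ${\rm in}_\prec(p q_0) \in Q_n$; axiom (6) of Definition~\ref{def:filtration} then supplies $p' \in P_{k,n}$ with ${\rm in}_\prec(p' q_0) = {\rm in}_\prec(p q_0)$. Finally I reassemble: using the $P$-order property (Definition~\ref{defn:order}) and that $p'$ is a ring homomorphism,
\[ {\rm in}_\prec(q p' g) = {\rm in}_\prec(q p' \cdot {\rm in}_\prec(g)) = q \cdot {\rm in}_\prec(p' q_0) = q \cdot {\rm in}_\prec(p q_0) = {\rm in}_\prec(f), \]
with $q \in Q_n$, $p' \in P_{k,n}$, $g \in G \cap \K[Q_k]$, $k \le n_0$, which is exactly the claimed form. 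The only subtlety to double-check is the edge case where $q_0 \in Q_0$ (or wherever the filtration bottoms out) so that "$q_0 \in Q_k \setminus Q_{k-1}$" must be read appropriately; if the indexing starts at $1$ one takes $k=1$ and interprets $Q_0$ as $\{1\}$ (or notes that $1 \,|_P\, {\rm in}_\prec(f)$ with witness the identity is already in any $P_{1,n}$ by axiom (5)), so the argument goes through uniformly.
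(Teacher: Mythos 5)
Your proposal is correct and follows essentially the same route as the paper's proof: invoke the Gr\"obner basis property to get ${\rm in}_\prec(f) = q \cdot {\rm in}_\prec(p\,{\rm in}_\prec(g))$, use Remark \ref{rmk:filtration} to place $q$ and ${\rm in}_\prec(p\,{\rm in}_\prec(g))$ in $Q_n$, choose $k \le n_0$ with ${\rm in}_\prec(g) \in Q_k \setminus Q_{k-1}$, apply the exchange axiom (6) to replace $p$ by $p' \in P_{k,n}$, and conclude $g \in \K[Q_k]$ from the initial segment axiom (7). Your extra care in applying axiom (6) to the monomial ${\rm in}_\prec(g)$ and in handling the bottom of the filtration only makes explicit what the paper leaves implicit.
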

\begin{proof}
Let $0 \neq f \in  \K[Q_n] \cap I$.  Since $G$ is a $P$-Gr\"obner basis, it follows that \[\text{{\rm in}}_\prec(f) = q  \cdot {\rm in}_\prec(pg) \]
for some $q \in Q$, $p \in P$, and $g \in G$.  Since ${\rm in}_\prec(f) \in Q_n$, Remark \ref{rmk:filtration} implies that $q \in Q_n$ and ${\rm in}_\prec(pg) \in Q_n$.  Let $k \leq n_0$ be such that $ \text{{\rm in}}_\prec(g) \in Q_k \setminus Q_{k-1}$.  From the exchange property (6) of Definition \ref{def:filtration}, there is a $p' \in P_{k,n}$ such that  ${\rm in}_\prec(p' g) = {\rm in}_\prec(pg)$.  Thus, ${\rm in}_\prec(f) = q \cdot {\rm in}_\prec(p'g) = {\rm in}_\prec(qp'g)$.  Finally, since $Q_k$ is an initial segment, it follows that $g \in \K[Q_k]$.
\end{proof}

\begin{thm}[Chain stabilization]\label{thm:symminvarstab}
Let $\preceq$ be a $P$-order.  If $P$-divisibility $\, |_P \,$ is a well-partial-ordering, then every $P$-invariant chain stabilizes.
\end{thm}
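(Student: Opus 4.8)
The plan is to pass to the limit ideal of the chain, invoke the global finiteness already established, and then transport generation back to each level of the chain using the level-tracking division algorithm packaged in Lemma~\ref{lemma:GBfiltration}.

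First I would form $I := \mathcal{N}(I_\circ) = \bigcup_n I_n$, a $P$-invariant ideal of $\K[Q]$. Since $\, |_P \,$ is a well-partial-ordering, Theorem~\ref{main_GB_thm} furnishes a finite $P$-Gr\"obner basis $G$ of $I$. Each element of $G$ involves only finitely many monomials, each contained in some $Q_m$; as the $Q_m$ are nested, $G$ is finite, and $G \subseteq I = \bigcup_n I_n$, there is an index $n_0$ with $G \subseteq I_{n_0}$, hence also $G \subseteq \K[Q_{n_0}]$. I claim the chain stabilizes with this $n_0$; that is, for every $n \ge n_0$,
\[
I_n = \bigcup_{k \le n_0} \< P_{k,n} I_k \>_{\K[Q_n]}.
\]
The inclusion $\supseteq$ is immediate: for $k \le n_0 \le n$ the $P$-invariance of the chain gives $P_{k,n} I_k \subseteq I_n$, and $I_n$ is an ideal of $\K[Q_n]$, so $\< P_{k,n} I_k\>_{\K[Q_n]} \subseteq I_n$; now take the union over $k \le n_0$.

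For $\subseteq$, I would fix $n \ge n_0$ and $0 \ne f \in I_n$ and run the division algorithm exactly as in the proof of Proposition~\ref{prop:GBgenerate}, but feeding it Lemma~\ref{lemma:GBfiltration} at each stage. Since any nonzero intermediate polynomial $f'$ lies in $\K[Q_n] \cap I$, the lemma supplies $q' \in Q_n$, $p' \in P_{k',n}$, $g' \in G \cap \K[Q_{k'}]$ with $k' \le n_0$, and a scalar, so that subtracting the appropriate multiple of $q'p'g'$ strictly lowers the initial monomial; note that $q'p'g' \in \K[Q_n]$ (by Definition~\ref{def:filtration}(4) and because $Q_n$ is a submonoid) and $q'p'g' \in I$ (as $g' \in G \subseteq I$ and $I$ is a $P$-invariant ideal), so the new polynomial again lies in $\K[Q_n] \cap I$. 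Since $\preceq$ is a well-order this process terminates, yielding $f = \sum_i c_i q_i p_i g_i$ with $q_i \in Q_n$, $p_i \in P_{k_i,n}$, $g_i \in G \cap \K[Q_{k_i}]$, $k_i \le n_0$. It then remains to check that each term $q_i p_i g_i$ lies in $\bigcup_{k \le n_0}\< P_{k,n} I_k\>_{\K[Q_n]}$, which, since that set is a $\K[Q_n]$-submodule, will give $f \in \bigcup_{k \le n_0}\< P_{k,n} I_k\>_{\K[Q_n]}$, as needed.

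The step I expect to be the main obstacle is precisely this last check: placing each $q_i p_i g_i$ in $\bigcup_{k \le n_0}\< P_{k,n} I_k\>_{\K[Q_n]}$ rather than merely in $\K[Q_n] \cap I$. This is where the remaining filtration axioms do their work. Because $g_i$ is a Gr\"obner basis element it lies in $I_{n_0}$, and since $\mathbf{1} \in P_{n_0,n}$ the element $g_i$ itself already lies in the target module; the content is to control the action of $p_i$, using the exchange property (Definition~\ref{def:filtration}(6)), which converts a witness $p \in P$ into one in the relevant $P_{k,n}$, together with the fact that each $Q_k$ is an initial segment, so as to rewrite $p_i g_i$, and hence $q_i p_i g_i$, back inside the target module. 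I anticipate carrying this out in tandem with the auxiliary identity $\K[Q_n] \cap I = I_n$ for all $n \ge n_0$, established by the same induction on initial monomials; the genuinely delicate situation is when a division step has trivial monomial cofactor $q_i = \mathbf{1}$ but $p_i \neq \mathbf{1}$, since then one cannot recurse on ${\rm in}_\prec(p_i g_i) = {\rm in}_\prec(f')$ and must invoke the filtration structure directly to locate $p_i g_i$ in the chain.
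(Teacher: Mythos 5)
Your route is the paper's own: form $I=\mathcal{N}(I_\circ)$, take a finite $P$-Gr\"obner basis $G$ via Theorem~\ref{main_GB_thm}, choose $n_0$ with $G\subseteq I_{n_0}\subseteq\K[Q_{n_0}]$, and descend as in Proposition~\ref{prop:GBgenerate} using Lemma~\ref{lemma:GBfiltration}; everything through the decomposition $f=\sum_i c_iq_ip_ig_i$ with $q_i\in Q_n$, $p_i\in P_{k_i,n}$, $g_i\in G\cap\K[Q_{k_i}]$, $k_i\le n_0$, is correct and is exactly what the paper compresses into its final sentence. The gap is the step you yourself flag and postpone: passing from this decomposition to membership in $\bigcup_{k\le n_0}\langle P_{k,n}I_k\rangle_{\K[Q_n]}$. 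The obstruction is a level mismatch: Lemma~\ref{lemma:GBfiltration} delivers the witness $p_i$ only in $P_{k_i,n}$, where $k_i$ is the level of ${\rm in}_\prec(g_i)$, while your membership information about $g_i$ is at level $n_0$; Definition~\ref{def:filtration} imposes no relation between $P_{k_i,n}$ and $P_{n_0,n}$, and $g_i$ need not lie in $I_{k_i}$, because the chain need not be saturated at low levels. Neither of your proposed tools closes this. The exchange property (6) only matches leading monomials of a single monomial, ${\rm in}_\prec(p'q)={\rm in}_\prec(pq)$; it gives no control of $p'g_i$ versus $p_ig_i$ as polynomials, so it cannot ``rewrite $p_ig_i$ back inside the target module,'' and if you retreat to matching leading terms only, you must already know that the candidate module contains an element with the prescribed leading monomial, which is essentially the statement being proved. (Also, your assertion that $\bigcup_{k\le n_0}\langle P_{k,n}I_k\rangle_{\K[Q_n]}$ is a $\K[Q_n]$-submodule is not justified by the axioms, since these modules need not be nested.)

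Moreover, the auxiliary identity $\K[Q_n]\cap I=I_n$ for $n\ge n_0$ that you hope to establish alongside the descent is false in general. In $\K[X_\P]$ with the filtration of Theorem~\ref{thm:pifinite} (take $r=1$), set $I_n=0$ for $n<10$ and $I_n=\langle x_1,\dots,x_{n-9}\rangle_{\K[Q_n]}$ for $n\ge 10$. This is a $\Pi$-invariant chain (if $\pi(n)\le m$ and $j\le n-9$, then $\pi(j)\le m-9$), and it stabilizes with $n_0=10$; yet $I=\mathcal{N}(I_\circ)=\langle x_j:j\in\P\rangle$, so $x_n\in(\K[Q_n]\cap I)\setminus I_n$ for every $n\ge 10$, and the failure persists for every choice of $n_0$. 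The same example shows why term-by-term transport of the Lemma's output fails: here $G=\{x_1\}\subseteq I_{10}$, and for $f=x_1x_n\in I_n$ the Lemma may return $q=x_1$, $g=x_1$, and a witness $\pi\in\Pi_{1,n}$ with $\pi(1)=n$; then $\pi$ lies in no $\Pi_{k,n}$ for which $g\in I_k$ (indeed $g\notin I_1=0$ and $\pi\notin\Pi_{10,n}$). The term $x_1x_n$ does lie in $\langle\Pi_{10,n}I_{10}\rangle_{\K[Q_n]}$, but only via a different factorization than the data $(q,p,g)$ handed to you. So the final step requires a genuinely new argument (for instance, first showing that every monomial in ${\rm in}_\prec(I_n)$ is the leading monomial of an element of the candidate module and then descending against that module), and as written your proposal does not yet establish the theorem.
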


\begin{proof}
Given an invariant chain $I_{\circ}$, construct the $P$-invariant ideal $I= \mathcal{N}(I_\circ)$ of $\K[Q]$, and let $G$ be a finite $P$-Gr\"obner basis for $I$ by Theorem \ref{main_GB_thm}.  The result now follows from Lemma \ref{lemma:GBfiltration} using a descent argument as in Proposition \ref{prop:GBgenerate}.
\end{proof}

\section{Examples, Counterexamples, and First Applications}\label{examples}

In this section, we begin to apply the abstract theory from Section \ref{Pgroebner} to specific examples that make frequent appearances in applications.  Although the finite Gr\"obner basis results we derive are for ideals invariant under the monoid of increasing functions, we can easily produce corollaries for the more familiar setting of ideals that are stable under a symmetric group action.  In Section \ref{sec:algstat}, we apply these ideas to Markov bases and other implicitization problems in algebraic statistics.

Our main monoid $P$ of interest for constructing monoidal Gr\"obner bases will be the monoid of increasing functions (the \textit{shift monoid}):
$$\Pi :=  \left\{ \pi :  \P \to \P :  \pi(i) < \pi(i+1) \mbox{ for all } i \in \P \right\}.$$

Given a set $R$, let $X_R =  \{x_r: r \in R \}$ denote the set of indeterminates indexed by $R$, and let $\K[X_R]$ be the (commutative) polynomial ring with coefficients in $\K$ and indeterminates $X_R$.  Of special interest is when $R$ is a product of the form $R = R_1 \times \cdots \times R_m$.  For $r \in \P$, let $[r] = \{1,2, \ldots, r \}$.  Our first result concerns the case $R= [r] \times \P$ with the (linear) action of $\Pi$ on  $\K[X_{[r] \times \P}]$ being generated by its action on the second index of the indeterminates $X_{[r] \times \P}$:  $$\pi x_{i,j}  :=  x_{i, \pi(j)}, \ \ \pi \in \Pi.$$

\begin{thm}\label{thm:basicpoly}
The column-wise lexicographic term order $x_{i,j} \preceq x_{k,l}$ if $j < l$ or ($j = l$ and $i \leq k$) is a $\Pi$-order on $\K[X_{[r] \times \P}]$ such that $\Pi$-invariant ideals of $\K[X_{[r] \times \P}]$ have finite $\Pi$-Gr\"obner bases.  In particular, the ring $\K[X_{[r] \times \P}]$ is a Noetherian $\K[X_{[r] \times \P}] \ast \Pi$-module.
\end{thm}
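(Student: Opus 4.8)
The plan is to verify three properties of the column-wise lexicographic order $\preceq$ and then invoke the machinery of Section~\ref{Pgroebner}: (i) $\preceq$ is a well-order on the set $Q$ of monomials of $\K[X_{[r]\times\P}]$; (ii) $\preceq$ is a $\Pi$-order; (iii) $\Pi$-divisibility $|_\Pi$ is a well-partial-ordering. Given (i)--(iii), Theorem~\ref{main_GB_thm} supplies a finite $\Pi$-Gr\"obner basis for every $\Pi$-invariant ideal, and Corollary~\ref{cor:GBfinitecor} then yields the Noetherian module statement; so all the work is in (i)--(iii).

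For (i) and (ii) the convenient description of $\preceq$ on monomials is: the indeterminates $x_{i,j}$ form a chain of order type $\omega$ (since $[r]$ is finite), and for distinct monomials $m,m'$, letting $v^\ast$ be the $\preceq$-largest indeterminate at which their exponents differ (which exists since $m,m'$ have finite support), we set $m\prec m'$ iff the exponent of $v^\ast$ in $m$ is smaller than in $m'$. That $\preceq$ is a well-order follows because a monomial $m'$ with $m'\prec m$ cannot involve any indeterminate exceeding all those occurring in $m$ (such a variable would make $v^\ast$ witness $m\prec m'$ instead); hence every $\prec$-descending sequence is confined to the polynomial subring in finitely many variables determined by its first term, where $\prec$ is the ordinary lexicographic order and so a well-order. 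For (ii) I would apply Proposition~\ref{prop:orderchar}: since each $\pi\in\Pi$ sends monomials to monomials, ${\rm in}_\prec(q\pi q_i)=q\cdot\pi(q_i)$ is a genuine monomial product, so it suffices to check that $q_1\prec q_2$ implies $q\cdot\pi(q_1)\prec q\cdot\pi(q_2)$. From the $v^\ast$-description this splits into two easy facts: multiplication by a fixed monomial leaves $v^\ast$ unchanged while shifting both exponents by the same amount, hence preserves $\prec$; and $x_{i,j}\mapsto x_{i,\pi(j)}$ is an order-preserving injection of the chain of indeterminates, so it carries the distinguished variable of $(m,m')$ to that of $(\pi m,\pi m')$ and preserves the comparison of exponents there.

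The crux is (iii). By Proposition~\ref{prop:equivorder} it suffices to show that every infinite sequence $m_1,m_2,\dots$ of monomials is good, i.e. $m_i\,|_\Pi\,m_j$ for some $i<j$ (that $|_\Pi$ is a coarsening of the well-order $\preceq$, by Proposition~\ref{lemma:Pdivispartialorder}, already rules out infinite descending chains). I would encode a monomial $m=\prod x_{i,j}^{a_{i,j}}$ as the finite word $\phi(m)=(c_1,\dots,c_N)$ over the alphabet $\N^r$, where $c_j=(a_{1,j},\dots,a_{r,j})$ and $N$ is the last column index with $c_N\neq 0$ (so $\phi(1)$ is the empty word). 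Since each $\pi\in\Pi$ is a monomial map, ${\rm in}_\prec(\pi m)=\pi m$, so $m\,|_\Pi\,m'$ holds exactly when some ``shift'' $\pi(m)$, $\pi\in\Pi$, divides $m'$; one then checks this is equivalent to $\phi(m)$ embedding into $\phi(m')$ as a Higman subword, letters compared componentwise in $\N^r$. The correspondence uses that a strictly increasing map defined on an initial segment $\{1,\dots,N\}\subseteq\P$ extends to a member of $\Pi$, and that a subword embedding $\{1,\dots,N\}\hookrightarrow\{1,\dots,N'\}$ satisfies $i_k\ge k$, mirroring $\pi(j)\ge j$ for $\pi\in\Pi$; keeping the interior zero columns up to level $N$ is essential here, since dropping them would wrongly give $x_{1,10}\,|_\Pi\,x_{1,3}$. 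Now $(\N^r,\le)$ is a well-partial-order by Dickson's Lemma, so by Higman's Lemma \cite{Higman} the set of finite words over $\N^r$, ordered by subword embedding, is a well-partial-order; applying this to $\phi(m_1),\phi(m_2),\dots$ yields $i<j$ with $\phi(m_i)$ embedding into $\phi(m_j)$, hence $m_i\,|_\Pi\,m_j$. I expect the main obstacle to be exactly this encoding step---pinning down that subword embedding corresponds to $|_\Pi$, including the bookkeeping role of the zero-padding; the remaining steps are routine manipulations with the $v^\ast$-description together with the classical lemmas of Dickson and Higman.
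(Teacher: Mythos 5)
Your proposal is correct and follows essentially the same route as the paper: verify the $\Pi$-order property via Proposition~\ref{prop:orderchar} (monomial multiplication plus the order-preserving column shift) and identify $\Pi$-divisibility with the Higman subword order on finite words over $\N^r$, then apply Dickson's and Higman's Lemmas together with Theorem~\ref{main_GB_thm} and Corollary~\ref{cor:GBfinitecor}. Your additional care about the well-orderedness of the column-wise lex order in infinitely many variables and about retaining interior zero columns in the word encoding only makes explicit details the paper's proof leaves implicit.
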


We call the $\Pi$-divisibility order induced by the column-wise lexicographic order in the statement of Theorem \ref{thm:basicpoly} the \textit{shift order}.  We shall prove Theorem \ref{thm:basicpoly} using Theorem \ref{main_GB_thm} by showing that the $\Pi$-divisibility partial order on monomials in $\K[X_{[r] \times \P}]$ is a well-partial-order.  Before verifying this fact,  we recall the notion of a Higman partial order associated to a well-partial-order.

\begin{defn}[The Higman Partial Order]\label{HigmanPO}
Let $(S, \preceq)$ be a partially-ordered set.  Let $(S_H, \preceq_H)$ be defined on the set $S_H = S^*$ of finite words of elements of $S$ by:
$$u_1u_2 \cdots u_n  \preceq_H v_1 v_2 \cdots v_m$$ 
if and only if there is a $\pi \in \Pi$ such that $u_i \preceq v_{\pi(i)}$ for $i \in [n]$.  
\end{defn}

The main result about Higman partial orders is Higman's Lemma \cite{Higman, NW1}.

\begin{lemma}[Higman's Lemma]\label{lem:hig} 
If $(S, \preceq)$ is a well-partial-order, then the Higman partial order $(S_H, \preceq_H)$ is also a well-partial-order.
\end{lemma}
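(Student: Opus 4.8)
The plan is to prove Higman's Lemma by the classical \emph{minimal bad sequence} argument of Nash-Williams \cite{NW1}, phrased through the characterization of well-partial-orders in Proposition \ref{prop:equivorder}. (One first notes in passing that $\preceq_H$ really is a preorder: reflexivity comes from the identity in $\Pi$, and transitivity from composing increasing functions; this is all that is needed for the notion of well-partial-ordering used here.)

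First I would assume, for contradiction, that $(S_H,\preceq_H)$ is not a well-partial-order. By Proposition \ref{prop:equivorder} there is then a \emph{bad} sequence of words $w_1,w_2,\dots$ in $S_H$, meaning $w_i \not\preceq_H w_j$ whenever $i<j$. Among all bad sequences I would select a \emph{minimal} one recursively: having chosen $w_1,\dots,w_{k-1}$, let $w_k$ be a word of least possible length such that $w_1,\dots,w_{k-1},w_k$ extends to an infinite bad sequence. A routine dependent-choice argument (using that word lengths lie in $\N$) shows such a sequence exists, and that the resulting infinite sequence $w_1,w_2,\dots$ is itself bad. No $w_i$ can be the empty word, since the empty word $\preceq_H$-embeds into every word; so I may write $w_i = s_i w_i'$, where $s_i\in S$ is the first letter of $w_i$ and $w_i'$ is its (strictly shorter) tail.

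Next I would apply the hypothesis that $(S,\preceq)$ is a well-partial-order to the sequence of first letters $s_1,s_2,\dots$: by Proposition \ref{prop:equivorder} it has an infinite increasing subsequence $s_{i_1}\preceq s_{i_2}\preceq\cdots$ with $i_1<i_2<\cdots$. Consider the new sequence
\[ w_1,\ w_2,\ \dots,\ w_{i_1-1},\ w_{i_1}',\ w_{i_2}',\ w_{i_3}',\ \dots. \]
Its $i_1$-th term $w_{i_1}'$ is strictly shorter than $w_{i_1}$, so to contradict minimality it suffices to show this sequence is bad. I would check this by cases on a hypothetical $\preceq_H$-relation: (i) between $w_a\preceq_H w_b$ with $a<b<i_1$ — impossible, as the original sequence is bad; (ii) between $w_a$ ($a<i_1$) and a tail $w_{i_k}'$ — then $w_a\preceq_H w_{i_k}'\preceq_H w_{i_k}$ (dropping a letter produces a $\preceq_H$-larger word), so $w_a\preceq_H w_{i_k}$ with $a<i_k$, contradicting badness of the original sequence; (iii) between two tails $w_{i_k}'\preceq_H w_{i_l}'$ with $k<l$ — then prepending the letters $s_{i_k}\preceq s_{i_l}$ to both words gives $w_{i_k}=s_{i_k}w_{i_k}'\preceq_H s_{i_l}w_{i_l}'=w_{i_l}$ with $i_k<i_l$, again contradicting badness. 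All cases being impossible, the new sequence is bad, contradicting minimality; hence no bad sequence exists and $(S_H,\preceq_H)$ is a well-partial-order.

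The main obstacle — and the only genuinely nontrivial ingredient — is the minimal bad sequence construction: one must set up the minimality so that excising one letter from a single word (while discarding finitely many earlier words) strictly lowers the configuration, and one must confirm that the minimal bad sequence is well-defined and is actually bad. Once that device is in place, the combination of the letter-by-letter embedding of $\preceq_H$ with Proposition \ref{prop:equivorder} reduces everything to the short case analysis above.
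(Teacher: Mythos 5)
Your proof is correct: it is the classical Nash--Williams minimal-bad-sequence argument, with the case analysis (pairs before position $i_1$, a word before $i_1$ against a tail, and two tails) complete and each case handled properly. The paper itself states Higman's Lemma without proof, simply citing Higman and Nash-Williams, and your argument is precisely the one from the cited source \cite{NW1}, so there is no divergence to report.
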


Below, we shall apply Higman's Lemma to the set $S = \N^r$, partially ordered by inequality: 
\begin{equation*}\label{part_order}
(s_1,\ldots,s_r) \preceq (t_1,\ldots,t_r) :\Leftrightarrow s_i \leq t_i \text{ for } i = 1,\ldots,r.
\end{equation*}
This is a well-partial-order by Dickson's Lemma, and it can be interpreted as a well-partial-ordering on the monomials of $\K[X_{[r] \times \P}]$.

\begin{ex}
In the Higman ordering on words $(\N^2)^*$ induced by the partial order above, 
\[  (1,2)(3,1)(2,5) \preceq_H (1,0)(1,4)(5,2)(1,2)(2,7),\]
witnessed by any shift monoid element $\pi \in \Pi$ that has $\pi(1) = 2$, $\pi(2) = 3$, $\pi(3) = 5$.  Interpreted as a
$\Pi$-divisibility relation between monomials in the polynomial ring $\K[X_{[2] \times \P}]$, this statement reads:
\[  x_{1,1} x_{1,2}x_{2,2}^4 x_{1,3}^5x_{2,3}^2  x_{1,4}x_{2,4}^2 x_{1,5}^2x_{2,5}^7 = x_{1,1} x_{2,2}^2 x_{1,3}^2x_{2,3}  x_{1,4}x_{2,4}^2 x_{2,5}^2  \cdot \pi(  x_{1,1}x_{2,1}^2 x_{1,2}^3x_{2,2} x_{1,3}^2x_{2,3}^5).\] 
\qed
\end{ex}

\begin{proof}[Proof of Theorem \ref{thm:basicpoly}]
We first show that the column-wise lexicographic order is a $\Pi$-order on 
$\K[X_{[r] \times \P}]$.  Each monomial in $\K[X_{[r] \times \P}]$ has the 
form $x^u =  x_1^{u_1} \cdots x_n^{u_n}$ for some $n \in \P$, where 
$x_j^{u_j}  =  \prod_{i \in [r]} x_{i,j}^{u_{i,j}}$.  
Suppose that $x^u \prec x^v$.  Then we can write
$$x^u  =    x_1^{u_1} \cdots x_k^{u_k} x_{k+1}^{v_{k+1}} \cdots x_m^{v_n}$$ 
for some $k \leq n$ in which $x_k^{u_k} \prec x_k^{v_k}$.  For $\pi \in \Pi$, we have 
$$\pi x^u =   x_{\pi(1)}^{u_1} \cdots x_{\pi(k)}^{u_k} x_{\pi(k+1)}^{v_{k+1}} \cdots x_{\pi(n)}^{v_n},$$
$$\pi x^v =  x_{\pi(1)}^{v_1} \cdots x_{\pi(k)}^{v_k} x_{\pi(k+1)}^{v_{k+1}} \cdots x_{\pi(n)}^{v_n}.$$
Since $\pi$ is increasing, the right-most column index where $\pi x^u$ and $\pi x^v$ disagree is at $\pi(k)$, in which case $x_{\pi(k)}^{u_k} \prec x_{\pi(k)}^{v_k}$ so that $\pi x^u \prec \pi x^v$.  Since multiplication by an ordinary monomial preserves $\preceq$ for any term order, this proves that $\preceq$ is a $\Pi$-order by Proposition \ref{prop:orderchar}.

Next, we must show that $\Pi$-divisibility on $\K[X_{[r] \times \P}]$ is a well-partial-order.  In the $\Pi$-divisibility partial order, we have $x^u \, |_\Pi \, x^v$ if and only if there is a $\pi \in \Pi$ such that $ \pi x^u \, | \, x^v$ (monomial division-wise).  In turn, this happens if and only if there is a $\pi \in \Pi$ such that $x_{\pi(i)}^{u_i} | x_{\pi(i)}^{v_{\pi(i)}}$ for each $i \in [n]$.  In other words, $\Pi$-divisibility is the Higman partial order of the standard divisibility partial order on the monomials of $\K[X_{[r] \times \P}]$ (viewed as elements of $(\N^r)^*$).  Thus, Higman's Lemma implies that $\Pi$-divisibility is a well-partial-order.  Theorem \ref{main_GB_thm} now implies that $\K[X_{[r] \times \P}]$ has finite Gr\"obner bases; in particular, by Corollary \ref{cor:GBfinitecor} it is a Noetherian $\K[X_{[r] \times \P}] \ast \Pi$-module.
\end{proof}

As a corollary to Theorem \ref{thm:basicpoly}, we also deduce the Noetherian property for ideals that are stable under the action of the symmetric group $\S_\P$.  This was Theorem \ref{thm:sn} from the introduction.

\begin{cor}\label{cor:sn}
The polynomial ring $\K[X_{[r] \times \P}]$ is a Noetherian $\K[X_{[r] \times \P}] \ast \S_\P$-module.
\end{cor}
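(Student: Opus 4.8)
The plan is to deduce the corollary from Theorem \ref{thm:basicpoly} by a formal transfer between $\S_\P$-invariance and $\Pi$-invariance. The key point is that, although $\Pi$ is not literally contained in $\S_\P$ (its elements are injective but not surjective), the \emph{action} of any $\pi \in \Pi$ on a \emph{fixed} polynomial agrees with that of some permutation. Concretely: given $f \in \K[X_{[r] \times \P]}$, only finitely many column indices $j$ appear among the variables occurring in $f$, say $j_1 < \dots < j_k$; since $\pi$ is increasing, $\pi(j_1) < \dots < \pi(j_k)$, so the partial injection $j_i \mapsto \pi(j_i)$ extends to a bijection $\sigma \in \S_\P$ (match the two finite sets, then any bijection of the cofinite complements). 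Then $\pi \cdot f = \sigma \cdot f$. First I would record this as a lemma.

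An immediate consequence is that every $\S_\P$-invariant ideal $I \subseteq \K[X_{[r] \times \P}]$ is $\Pi$-invariant: for $f \in I$ and $\pi \in \Pi$, choosing $\sigma$ as above gives $\pi \cdot f = \sigma \cdot f \in \S_\P I = I$. Now, given an $\S_\P$-invariant ideal $I$, Theorem \ref{thm:basicpoly} together with Corollary \ref{cor:GBfinitecor} furnishes a finite set $G = \{g_1, \dots, g_s\} \subseteq I$ generating $I$ as a $\K[X_{[r] \times \P}] \ast \Pi$-module. I claim the same $G$ generates $I$ as a $\K[X_{[r] \times \P}] \ast \S_\P$-module. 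Indeed, $\langle G \rangle_{\K[Q] \ast \Pi} \subseteq \langle G \rangle_{\K[Q] \ast \S_\P}$, because each generator $q \pi g_i$ with $q \in Q$, $\pi \in \Pi$ equals $q \sigma g_i$ for a suitable $\sigma \in \S_\P$ by the lemma applied to $g_i$; and $\langle G \rangle_{\K[Q] \ast \S_\P} \subseteq I$ since $I$ is $\S_\P$-invariant. Combining with $I = \langle G \rangle_{\K[Q] \ast \Pi}$ yields $I = \langle G \rangle_{\K[Q] \ast \S_\P}$. Since the $\S_\P$-invariant ideals of $\K[X_{[r] \times \P}]$ are precisely its $\K[X_{[r] \times \P}] \ast \S_\P$-submodules, and each is finitely generated, $\K[X_{[r] \times \P}]$ is a Noetherian $\K[X_{[r] \times \P}] \ast \S_\P$-module.

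The only real content is the observation in the first paragraph; the rest is bookkeeping. The one subtlety to be careful about is that $\K[Q] \ast \S_\P$ and $\K[Q] \ast \Pi$ are genuinely different rings, so the generation statement must be verified at the level of modules rather than just ideals — but that is exactly what the lemma accomplishes, since it converts each $\Pi$-action occurring in a module expression for an element of $I$ into an $\S_\P$-action on the same generator, and vice versa. I do not anticipate any real obstacle beyond stating the extension-to-a-permutation argument carefully.
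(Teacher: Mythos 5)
Your proposal is correct and takes essentially the same route as the paper's proof: the central observation that $\pi \cdot f = \sigma \cdot f$ for a suitable $\sigma \in \S_\P$ (because $f$ involves only finitely many column indices and $\pi$ is increasing) is exactly the paper's argument. From it, both you and the paper conclude that every $\S_\P$-stable ideal is $\Pi$-stable and that the finite $\K[X_{[r] \times \P}] \ast \Pi$ generating set supplied by Theorem \ref{thm:basicpoly} also generates over $\K[X_{[r] \times \P}] \ast \S_\P$.
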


\begin{proof}
Each polynomial $f \in \K[X_{[r] \times \P}]$ depends on only finitely many column indices.  Thus, if $\pi \in \Pi$, there exists $\sigma \in \S_\P$ such that $\sigma \cdot f = \pi \cdot f$.  Indeed, if the largest column index appearing in $f$ is $m$, then $\sigma$ can be chosen to be the identity on all $i > \pi(m)$.  This implies that every $\S_\P$-stable ideal $I$ is $\Pi$-stable and any $\K[X_{[r] \times \P}] \ast \Pi$ generating set of $I$ is a $\K[X_{[r] \times \P}] \ast \S_\P$ generating set.
\end{proof}

Note that the $r = 1$ version of Corollary \ref{cor:sn} is a main result of \cite{Aschenbrenner2007, Cohen67}.  Our proof and the material in Section 2 constitute a distillation and generalization of the proof in those papers.  A second corollary concerns infinite chains of symmetric ideals, each contained in a finite polynomial ring.

Before stating this result, we must first introduce a filtration of $\K[X_{[r] \times \P}] \ast \Pi$.   Let $Q_n \cong \mathbb{N}^{r \times n}$ be the set of monomials in the polynomial ring $\K[X_{[r] \times [n]}]$, and for $m \geq n$, let $\Pi_{n,m} \subset \Pi$ be the set of functions
\[\Pi_{n,m} = \{ \pi \in \Pi: \ \pi([n]) \subseteq [m] \}.\] 

\begin{thm}\label{thm:pifinite}
The sets $Q_n$ and $\Pi_{n,m}$ form a filtration of $\K[X_{[r] \times \P}] \ast \Pi$.  In particular, every $\Pi$-invariant ascending chain $I_\circ$ stabilizes.
\end{thm}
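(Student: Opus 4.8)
The plan is to verify, one at a time, the seven conditions of Definition~\ref{def:filtration} for the candidate filtration $Q_n \cong \N^{r\times n}$ and $\Pi_{n,m} = \{\pi \in \Pi : \pi([n]) \subseteq [m]\}$ (for $m \geq n$), and then to obtain the stabilization statement as an immediate application of Theorem~\ref{thm:symminvarstab}, using that the proof of Theorem~\ref{thm:basicpoly} already shows the shift order $\preceq$ is a $\Pi$-order for which $\Pi$-divisibility is a well-partial-order.

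Conditions (1)--(5) are essentially bookkeeping and I would dispatch them quickly. Each $Q_n$, being the monomial set of the polynomial subring $\K[X_{[r]\times[n]}]$, is closed under monomial multiplication and contains the empty monomial, hence is a submonoid; enlarging $n$ only enlarges the pool of available column indices, so $Q_n \subseteq Q_{n+1}$; every monomial involves finitely many columns and so lies in some $Q_n$, while every $\pi \in \Pi$ lies in $\Pi_{n,\pi(n)}$ for each $n$ (and $\pi(n) \geq n$ since $\pi$ is strictly increasing on $\P$), so the two unions exhaust $Q$ and $\Pi$; if $\pi \in \Pi_{n,m}$ and $x^u \in Q_n$, then $\pi x^u$ involves only columns in $\pi([n]) \subseteq [m]$, giving $\pi x^u \in \K[Q_m]$; and the identity map is increasing with $[n]\subseteq[m]$, hence lies in every $\Pi_{n,m}$.

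The two conditions worth spelling out are (7) and (6). For (7), I would show directly that $Q_n$ is an initial segment of $\preceq$: if $x^u \preceq x^v$ and $x^v$ uses no column exceeding $n$, then $x^u$ uses no column exceeding $n$ either, for otherwise the highest column at which $x^u$ and $x^v$ differ would lie above $n$, and there $x^u$ carries a positive exponent against a zero exponent of $x^v$, forcing $x^v \prec x^u$, a contradiction. For (6), the crucial observation is that every element of $\Pi$ is a monomial map, so for a monomial $q$ one has ${\rm in}_\prec(pq) = pq$. If $q \in Q_n \setminus Q_{n-1}$, then column $n$ genuinely occurs in $q$, so column $p(n)$ occurs in $pq$; the hypothesis ${\rm in}_\prec(pq) = pq \in Q_m$ then forces $p(n) \leq m$, and since $p$ is increasing we get $p([n]) \subseteq [p(n)] \subseteq [m]$, i.e.\ $p \in \Pi_{n,m}$ already. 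Hence one may simply take $p' = p$.

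With the filtration axioms established, the "in particular" clause is immediate: $\preceq$ is a $\Pi$-order and $\Pi$-divisibility is a well-partial-order (proof of Theorem~\ref{thm:basicpoly}), so Theorem~\ref{thm:symminvarstab} applies and every $\Pi$-invariant ascending chain $I_\circ$ stabilizes. I do not expect a genuine obstacle here; the only subtlety is noticing that the $\setminus Q_{n-1}$ hypothesis in axiom~(6) is exactly what pins down the inequality $p(n)\le m$ and thereby places the unmodified $p$ in $\Pi_{n,m}$, so that no adjustment of $p$ outside the support of $q$ is needed.
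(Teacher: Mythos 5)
Your proposal is correct and follows essentially the same route as the paper: the paper also checks the seven filtration axioms, singling out condition (6) and resolving it with exactly your observation that $\pi(n)\le m$ together with $\pi$ increasing already places $\pi$ in $\Pi_{n,m}$, so one may take $\pi'=\pi$; the stabilization claim then follows from Theorem~\ref{thm:symminvarstab} together with the well-partial-ordering of $\Pi$-divisibility established in Theorem~\ref{thm:basicpoly}. Your explicit verification of axiom (7) and the remark that elements of $\Pi$ are monomial maps are just slightly more detailed versions of steps the paper declares easy.
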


\begin{proof}
The seven conditions of Definition \ref{def:filtration} are easy to check.  The most difficult to parse is (6), which we describe in detail.   In our setting, condition (6) says that if a monomial $x^u =  x_1^{u_1} \cdots  x_n^{u_n}$ has $u_n \neq 0$ and $\pi \in \Pi$ has $\pi(n) \leq m$, then there is a $\pi' \in \Pi_{n,m}$ such that $\pi'(x^u) = \pi(x^u)$.  But if $\pi \in \Pi$ satisfies $\pi(n) \leq m$, then $\pi \in \Pi_{n,m}$ (since it is increasing).  In particular, we can take $\pi' = \pi$.  The second statement follows from Theorem \ref{thm:symminvarstab} and the fact that $\Pi$-divisibility is a well-partial-order (from the proof of Theorem \ref{thm:basicpoly}).
\end{proof}

The most important and useful implication of Theorem \ref{thm:pifinite} is the following corollary, which concerns chains of ideals stable under the action of the symmetric group.  It is this fact, and its variations, that allow us to prove the theorems in algebraic statistics that appear in the next section.  For simplicity of notation, we write $\S_{n}$ for $\S_{[n]}$ below.

\begin{cor}\label{cor:snfinite}
For each $n \in \P$, let $I_n \in \K[X_{[r] \times [n]}]$ be a $\S_{n}$-invariant ideal.  Suppose that the $I_n$ form an invariant ascending chain:
$$ \S_{m}   I_{n}  \subseteq I_{m}, \ \ \text{for each $n \leq m$}.$$
Then there exists an $n_0 \in \P$ such that for all $n > n_0$,  we have
$$\left< \S_{n}   I_{n_0} \right>_{\K[X_{[r] \times [n]}]}  = I_{n}.$$
In other words, ascending invariant chains are finitely generated up to symmetry.
\end{cor}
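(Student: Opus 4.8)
The plan is to deduce Corollary~\ref{cor:snfinite} from Theorem~\ref{thm:pifinite} by the same reduction from the shift monoid $\Pi$ to the symmetric group that was used to pass from Theorem~\ref{thm:basicpoly} to Corollary~\ref{cor:sn}. The key elementary observation is that any strictly increasing map $\pi \colon [n] \to [m]$ extends to a permutation $\sigma \in \S_{[m]}$ agreeing with $\pi$ on $[n]$; consequently, for any polynomial $f \in \K[X_{[r] \times [n]}]$ we have $\pi \cdot f = \sigma \cdot f$, since $f$ involves only column indices in $[n]$.

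First I would verify that the given $\S$-invariant chain is also a $\Pi$-invariant chain with respect to the filtration $(Q_n, \Pi_{n,m})$ of Theorem~\ref{thm:pifinite}. Indeed, if $f \in I_n$ and $\pi \in \Pi_{n,m}$, then writing $\pi = \sigma|_{[n]}$ for some $\sigma \in \S_{[m]}$ as above gives $\pi \cdot f = \sigma \cdot f \in \langle \S_{[m]} I_n \rangle_{\K[X_{[r]\times[m]}]} \subseteq I_m$, using the invariance hypothesis. Hence $\Pi_{n,m} I_n \subseteq I_m$, so the chain is $\Pi$-invariant, and by Theorem~\ref{thm:pifinite} it stabilizes: there is an $n_0$ with
\[ I_n = \bigcup_{k \le n_0} \langle \Pi_{k,n} I_k \rangle_{\K[X_{[r]\times[n]}]} \qquad \text{for all } n \ge n_0. \]

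It then remains to translate stabilization back into the stated symmetric-group form. Taking the identity in the invariance hypothesis shows $I_k \subseteq I_{n_0}$ for all $k \le n_0$. For $k \le n_0 \le n$, every $\pi \in \Pi_{k,n}$ extends to some $\sigma \in \S_{[n]}$, so $\langle \Pi_{k,n} I_k \rangle_{\K[X_{[r]\times[n]}]} \subseteq \langle \S_{[n]} I_k \rangle_{\K[X_{[r]\times[n]}]} \subseteq \langle \S_{[n]} I_{n_0} \rangle_{\K[X_{[r]\times[n]}]}$. Combining this with the displayed stabilization gives $I_n \subseteq \langle \S_{[n]} I_{n_0} \rangle_{\K[X_{[r]\times[n]}]}$ for $n \ge n_0$, while the reverse inclusion $\langle \S_{[n]} I_{n_0} \rangle_{\K[X_{[r]\times[n]}]} \subseteq I_n$ is exactly the invariance hypothesis with $m = n$. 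Therefore equality holds for all $n \ge n_0$, in particular for $n > n_0$, as claimed.

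I expect the only real subtlety to be bookkeeping: making sure the ``extend an increasing map to a permutation'' argument is applied with the correct source and target intervals at each stage, so that it legitimately yields both the $\Pi$-invariance of the chain (needed to invoke Theorem~\ref{thm:pifinite}) and the conversion of each $\langle \Pi_{k,n} I_k \rangle$ into $\langle \S_{[n]} I_{n_0} \rangle$. All of this is routine once the reduction is set up; the genuine mathematical content lives in Theorem~\ref{thm:pifinite}, and hence ultimately in Higman's Lemma via Theorem~\ref{thm:basicpoly}.
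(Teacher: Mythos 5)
Your proof is correct and follows essentially the same route as the paper: view the $\S_{[n]}$-invariant chain as a $\Pi$-invariant chain via the filtration $(Q_n,\Pi_{n,m})$, invoke Theorem~\ref{thm:pifinite}, and then use $\Pi_{k,n} I_k \subseteq \S_{[n]} I_k$ together with the invariance hypothesis to recover the symmetric-group statement with the same $n_0$. Your write-up is merely more explicit than the paper's about the extension of increasing maps to permutations and the inclusion $I_k \subseteq I_{n_0}$, both of which the paper leaves implicit.
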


\begin{proof}
An ascending invariant chain $I_\circ$ with respect to the filtration of \[\S_{(\P)} := \bigcup_{n \in \P} \S_{n}\] by the $\S_{n}$ is also an ascending invariant chain with respect to the filtration of $\Pi$ by the $\Pi_{n,m}$.  Hence, there exists an $n'_0$ with respect to which each $I_n$ with $n \geq n'_0$ is generated by the generators of $I_{n'_0}$.  Since $\Pi_{n,m} I_n \subseteq \S_{m} I_n$, for all $m \geq n$, this implies that $n_0 = n'_0$ is sufficient in the corollary.
\end{proof} 

Beyond Theorem \ref{thm:basicpoly}, we would like to have more general settings in which there is a priori knowledge that some family of ideals is finitely generated.  In a certain sense, Theorem \ref{thm:basicpoly} is best possible for infinite dimensional polynomial rings, as the following example illustrates.

\begin{ex}\label{ex:notnoth}
The polynomial ring $\K[X_{\P \times \P}]$ is naturally a $\K[X_{\P \times \P}] \ast ( \S_\P \times \S_\P)$-module, but this module is not Noetherian.  For instance, the ideal
$$I  = \left< x_{11} x_{12} x_{22} x_{21}, x_{11} x_{12} x_{22} x_{23} x_{33} x_{31}, \ldots,  x_{11}x_{12} x_{22} \cdots x_{mm} x_{m1},  \ldots \right>$$
is not finitely generated as a $\K[X_{\P \times \P}] \ast ( \S_\P \times \S_\P)$-module.
Via the natural correspondence between square-free monomials in doubly indexed variables and bipartite graphs, the sequence of generators listed above are even length cycles.  The fact that no even length cycle is a subgraph of any other even length cycle implies that this ideal is not finitely generated. \qed
\end{ex}

In spite of Example \ref{ex:notnoth}, it is possible to extend Theorem \ref{thm:basicpoly} via the theory from Section \ref{Pgroebner} to prove that certain ideals in rings such as $\K[X_{\P \times \P}]$ are finitely generated up to the action of $\S_\P \times \S_\P$.  This is done by combining the following elementary proposition with Corollary \ref{cor:divisible} below.  The idea will be to focus on $\Pi$-stable ideals $J  \subseteq \K[X_{\P \times \P}]$ which contain a subideal $I  \subseteq J$ such that $\K[X_{\P \times \P}]/I$ is Notherian (see Example \ref{divisNoetherianexample}).

\begin{prop}\label{prop:quotient}
Suppose that $L \subseteq M \subseteq N$ are $R$-modules, that $L$ is finitely generated, and that $N/L$ is a Noetherian $R$-module.  Then $M$ is finitely generated.
\end{prop}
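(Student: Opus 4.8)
The plan is to exploit the standard fact that a module is Noetherian if and only if all of its submodules are finitely generated, applied to the quotient $N/L$. First I would observe that since $N/L$ is a Noetherian $R$-module and $M/L$ is a submodule of $N/L$ (because $L \subseteq M \subseteq N$), the submodule $M/L$ is finitely generated over $R$. Pick finitely many elements $\bar{y}_1, \ldots, \bar{y}_t \in M/L$ generating $M/L$, and lift them to elements $y_1, \ldots, y_t \in M$.

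Next I would use the hypothesis that $L$ is finitely generated, say by $z_1, \ldots, z_s \in L$. The claim is then that $M$ is generated over $R$ by the finite set $\{y_1, \ldots, y_t, z_1, \ldots, z_s\}$. To verify this, take an arbitrary $x \in M$. Its image $\bar{x} \in M/L$ can be written as $\bar{x} = \sum_{i=1}^t r_i \bar{y}_i$ for some $r_i \in R$. Then $x - \sum_{i=1}^t r_i y_i$ maps to $0$ in $M/L$, hence lies in $L$, so it can be written as $\sum_{j=1}^s r'_j z_j$ for some $r'_j \in R$. Combining, $x = \sum_{i=1}^t r_i y_i + \sum_{j=1}^s r'_j z_j$, which exhibits $x$ in the $R$-submodule generated by our finite set. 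This is a routine diagram-chase style argument with no real obstacle; the only point requiring a word of care is that for left modules over a possibly noncommutative ring $R$ one must keep the scalars on the correct (left) side throughout, but the argument above does this automatically.

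The one genuinely substantive input is the equivalence ``Noetherian $\Leftrightarrow$ every submodule finitely generated,'' which is classical module theory and holds for modules over any (not necessarily commutative) ring; I would simply cite it or recall it in one line. There is no hard step here — the proposition is purely formal — so I would keep the write-up to a few lines.

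\begin{proof}
Since $N/L$ is Noetherian and $M/L \subseteq N/L$, the submodule $M/L$ is finitely generated over $R$; choose $y_1,\ldots,y_t \in M$ whose images generate $M/L$. Since $L$ is finitely generated, choose generators $z_1,\ldots,z_s$ of $L$. Given any $x \in M$, write its image in $M/L$ as a combination of the images of the $y_i$; subtracting the corresponding combination of the $y_i$ from $x$ yields an element of $L$, which is a combination of the $z_j$. Hence $x$ lies in the $R$-submodule generated by $y_1,\ldots,y_t,z_1,\ldots,z_s$, so $M$ is finitely generated.
\end{proof}
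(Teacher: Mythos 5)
Your proof is correct and follows essentially the same route as the paper: use Noetherianity of $N/L$ to finitely generate $M/L$, lift those generators to $M$, and combine them with the finitely many generators of $L$ to generate $M$. You simply spell out the verification that the paper leaves implicit, which is fine.
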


\begin{proof}
Since $N/L$ is Noetherian, $M/L$ has a finite generating set, with representatives in $M$.  These generators along with the generators of $L$ form a finite generating set of $M$.
\end{proof}

We next consider a natural class of rings that inherit Noetherianity from being contained in a Noetherian semigroup ring.  The goal in applications will be to show that quotients as above are isomorphic to one of these special rings.

\begin{defn}
A subsemigroup ring $\K[Q'] \subseteq \K[Q]$ is called \emph{divisible} if $q_1, q_2 \in Q'$ and $q_1 = q_3q_2$ implies that that $q_3 \in Q'$.  The subsemigroup ring $\K[Q']$ is $P$-\emph{invariant} if for all $q \in Q'$ and $p \in P$, we have $pq \in \K[Q']$.  
\end{defn}

\begin{cor}\label{cor:divisible}
Let $\K[Q']$ be a divisible $P$-invariant subring of $\K[Q]$.   If $\preceq$ is a $P$-order on $Q$, then
\begin{enumerate}
\item $\preceq$ is a $P$-ordering on $Q'$.
\end{enumerate}
If, in addition, $P$-divisibility is a well-partial-ordering on $Q$, then
\begin{enumerate}
\item[(2)]  $P$-divisibility is a well-partial-ordering on $Q'$ and
\item[(3)] $P$-invariant ideals of $\K[Q']$ have  finite $P$-Gr\"obner bases.
\end{enumerate}
If, in addition $Q_n$ and $P_{n,m}$ are a filtration of $\K[Q]\ast P$, then
\begin{enumerate}
\item[(4)]  $Q'_n = Q' \cap Q_n$ and $P_{n,m}$ are a filtration $\K[Q'] \ast P$, and
\item[(5)]  invariant chains $I_\circ$ with $I_n \in \K[Q_n']$ stabilize.
\end{enumerate}
\end{cor}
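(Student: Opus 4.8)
The plan is to verify each of the five claims in turn, treating them as essentially routine transfers of the corresponding statements about $\K[Q] \ast P$ to the subsemigroup ring $\K[Q'] \ast P$, using the divisibility and $P$-invariance hypotheses to make the restrictions behave well.

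\textbf{(1)} First I would observe that since $\K[Q']$ is $P$-invariant, every $p \in P$ restricts to a $\K$-algebra endomorphism of $\K[Q']$, so the setup of Section \ref{Pgroebner} applies to the pair $(Q', P)$. A $P$-order on $Q$ restricts to a well-ordering of the subset $Q' \subseteq Q$ (any subset of a well-ordered set is well-ordered under the induced order). The defining identity ${\rm in}_\prec(qp \cdot f) = {\rm in}_\prec(qp \cdot {\rm in}_\prec(f))$ for $q \in Q'$, $p \in P$, $f \in \K[Q']$ is immediate: the initial monomial of any element of $\K[Q']$ computed inside $\K[Q']$ agrees with the one computed inside $\K[Q]$ (same polynomial, same total order, the monomials involved simply happen to lie in $Q'$ — and by $P$-invariance of $\K[Q']$, $qp \cdot f$ again lies in $\K[Q']$). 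So the identity just descends verbatim, and $\preceq$ is a $P$-order on $Q'$.

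\textbf{(2)} For $P$-divisibility, the key point is that the relation $q_1 \,|_P\, q_2$ for $q_1, q_2 \in Q'$ is \emph{the same} whether computed in $Q$ or in $Q'$. If $q_1 \,|_P\, q_2$ holds in $Q$, then $q_2 = q \cdot {\rm in}_\prec(p q_1)$ for some $p \in P$ and $q \in Q$. Now ${\rm in}_\prec(p q_1) \in Q'$ since $\K[Q']$ is $P$-invariant and ${\rm in}_\prec$ of an element of $\K[Q']$ lies in $Q'$; then $q_2, {\rm in}_\prec(pq_1) \in Q'$ together with divisibility of $\K[Q']$ force $q \in Q'$, so the same witness shows $q_1 \,|_P\, q_2$ inside $Q'$. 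The converse inclusion is trivial. Hence $P$-divisibility on $Q'$ is the restriction of $P$-divisibility on $Q$ to the subset $Q'$, and a restriction of a well-partial-order to a subset is again a well-partial-order (no new antichains, no new infinite strictly decreasing sequences). This proves (2); and then (3) follows immediately by applying Theorem \ref{main_GB_thm} to $(Q', P)$, whose hypotheses we have just verified.

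\textbf{(4) and (5).} For (4) I would check the seven conditions of Definition \ref{def:filtration} for $Q'_n := Q' \cap Q_n$ and $P_{n,m}$. Conditions (1), (2), (5), and (7) are immediate from intersecting with $Q'$ (a submonoid of a submonoid is a submonoid; intersections of nested sets are nested; the identity lies in $Q'$; an intersection of an initial segment with a subset inherited from $\preceq$ is an initial segment). Condition (3): $\bigcup_n Q'_n = Q' \cap \bigcup_n Q_n = Q' \cap Q = Q'$, and $P = \bigcup P_{n,m}$ is unchanged. Condition (4): $P_{n,m} Q'_n \subseteq P_{n,m} Q_n \cap \K[Q']$ (using $P$-invariance of $\K[Q']$) $\subseteq \K[Q_m] \cap \K[Q'] = \K[Q'_m]$. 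The one requiring a moment's care is the exchange property (6): given $q \in Q'_n \setminus Q'_{n-1}$ with ${\rm in}_\prec(pq) \in Q'_m$, I need $q \in Q_n \setminus Q_{n-1}$ so that (6) for the original filtration applies. Membership $q \in Q_n$ is clear; if $q \in Q_{n-1}$ then $q \in Q' \cap Q_{n-1} = Q'_{n-1}$, contradicting the hypothesis — so indeed $q \in Q_n \setminus Q_{n-1}$. Applying (6) for the $Q_n$-filtration yields $p' \in P_{n,m}$ with ${\rm in}_\prec(p'q) = {\rm in}_\prec(pq)$, which is exactly what (6) demands here. Finally (5) is just Theorem \ref{thm:symminvarstab} applied to the filtration of $\K[Q'] \ast P$ just established, whose hypothesis ($P$-divisibility a well-partial-order on $Q'$) is part (2).

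\textbf{Main obstacle.} Everything here is bookkeeping; the only genuinely substantive point — and the one place where the \emph{divisible} hypothesis is actually used — is the claim in (2) that the $P$-divisibility relation restricted to $Q'$ coincides with the intrinsic $P$-divisibility of $Q'$, i.e. that the quotient monomial $q$ appearing in $q_2 = q \cdot {\rm in}_\prec(p q_1)$ can be taken in $Q'$. Without divisibility this could fail, and then a well-partial-order downstairs would not be inherited upstairs. So the proof is mostly a matter of isolating that one lemma-like observation and then feeding the verified hypotheses into Theorems \ref{main_GB_thm} and \ref{thm:symminvarstab}.
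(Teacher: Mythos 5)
Your proof is correct and takes essentially the same route as the paper's: restrict the $P$-order to $Q'$, use divisibility (together with $P$-invariance) to see that a $P$-divisibility relation in $Q$ between elements of $Q'$ has its quotient monomial in $Q'$, so the well-partial-order restricts, and then feed this into Theorem \ref{main_GB_thm}, the filtration axioms, and Theorem \ref{thm:symminvarstab}. You simply spell out in more detail the two points the paper treats tersely — the transfer of $P$-divisibility to $Q'$ and the seven filtration conditions (notably the exchange property) — which the paper dispatches as straightforward.
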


\begin{proof}
(1) If $q_1, q_2 \in Q'$, then $Q' \subseteq Q$ implies that the condition of Proposition \ref{prop:orderchar} is satisfied.  In particular, (1) holds regardless of whether $Q'$ is divisible.

(2)  Consider any infinite sequence of monomials in $Q'$.  Since $P$-divisibility is a well-partial-ordering on $Q$, this sequence is good when considered as a subset of $Q$.  Since $Q' \subseteq Q$ is a divisible subsemigroup, the sequence is also good in $Q'$.  Proposition \ref{prop:equivorder} implies that $P$-divisibility is also a well-partial-order on $Q'$.

(3) This follows from (2) and Theorem \ref{main_GB_thm}.

(4) There are seven conditions to check in the definition of a filtration; all of them are straightforward.

(5)  This follows from (4) and Theorem \ref{thm:symminvarstab}. 
\end{proof}

We close this section with an example illustrating how Proposition \ref{prop:quotient} and Corollary \ref{cor:divisible} will be used in Section \ref{sec:algstat}.

\begin{ex}\label{divisNoetherianexample}
Consider $\K[X_{\P \times \P}]$ as a module over $\K[X_{\P \times \P}] \ast \Pi$ with $\Pi$ acting on both indices simultaneously (i.e. $\pi x_{i,j} = x_{\pi(i), \pi(j)}$).  By Example \ref{ex:notnoth}, $\K[X_{\P \times \P}]$ is not a Noetherian $\K[X_{\P \times \P}] \ast \Pi$-module.  However, consider a $\Pi$-stable ideal $J \subseteq \K[X_{\P \times \P}]$ such that $I \subseteq J$, where 
$$I  =  \left< 
\det \begin{pmatrix} 
x_{i_1, j_1} & x_{i_1, j_2} \\
x_{i_2, j_1} & x_{i_2, j_2}
\end{pmatrix} :  i_1, i_2, j_1, j_2 \in \P \right>_{\K[X_{\P \times \P}]}$$
is the ideal of two-by-two minors of the matrix $X_{\P \times \P}$.  We have the following isomorphism of $\K[X_{\P \times \P}] \ast \Pi$-modules:
$$\K[X_{\P \times \P}]/ I \cong \K[y_{1,i}y_{2,j} : i, j \in \P],$$ the map being induced by $x_{i,j} \mapsto y_{1,i}y_{2,j}$.  Thus, $\K[X_{\P \times \P}]/ I$ has the same module structure as that of $\K[y_{1,i}y_{2,j} : i, j \in \P]$ as a $\K[y_{1,i}y_{2,j} : i, j \in \P] \ast \Pi$-module.  
Since $\K[y_{1,i}y_{2,j} : i, j \in \P]$ is a $\Pi$-stable divisible semigroup ring that is a subring of $\K[Y_{[2] \times \P}]$, we see that 
$\K[X_{\P \times \P}]/ I$ is a Noetherian $\K[X_{\P \times \P}] \ast \Pi$-module by  Corollary \ref{cor:divisible}.  Since $I$ is also finitely generated as a $\K[X_{\P \times \P}] \ast \Pi$-module, it follows that $J$ is finitely generated by Proposition \ref{prop:quotient}.
\qed
\end{ex}


\section{Applications in Algebraic Statistics}\label{sec:algstat}

In this section, we apply the theory developed in the previous two sections to give new proofs of some classical finiteness results about Markov bases of hierarchical models \cite{Hosten2007, Santos2003}.  These finiteness theorems follow from  Corollary \ref{cor:snfinite} for finite generation of chains of increasing symmetric ideals.  We also extend these results using Proposition \ref{prop:quotient} and Corollary \ref{cor:divisible} to give an affirmative solution to the independent set conjecture \cite[Conj.~4.6]{Hosten2007}.  Finally, we explain how these finiteness results extend beyond hierarchical models to more general statistical models.

Let $r_1,  \ldots, r_m \in \P$ and for a subset $F \subseteq [m]$, set $R_F = \prod_{i \in F} [r_i]$ to be the Cartesian product of the index sets $[r_i]$.  If $F = [m]$, we use the shorthand $R = R_F$.  For an algebraic object $\mathbb{A}$ (e.g.~a field, semiring, monoid) and a finite set $M$, let $\mathbb{A}^M$ be the Cartesian product of $\mathbb{A}$ with itself $\#M$ times, with coordinates indexed by $M$.

Let ${\bf i} \in R$ denote an index vector.  For each $F \subset [m]$, let ${\bf i}_F := (i_f)_{f \in F}$ be the substring ${\bf i}_F \in R_F$ obtained from ${\bf i} $ by the natural projection.
For $u \in \mathbb{R}^R$ and $F \subseteq [m]$, also let $u|_F \in \mathbb{R}^{R_F}$ be the $F$-\textit{marginal} of $u$, defined by linearly extending 
$$e_{{\bf i}}|_{F}  :=  e_{{\bf i}_F}.$$
Here, $e_{{\bf i}}$ is the standard unit table in $\mathbb{R}^{R}$, having a $1$ in the ${\bf i}$ position and zero elsewhere,  and similarly  $e_{{\bf i}_F}$ is the standard unit table in $\mathbb{R}^{R_F}$.  

Given a collection $\Gamma = \{F_1, F_2, \ldots \}$ of subsets of $[m]$, we define the $\Gamma$-\emph{marginal map} by
$$\pi_{\Gamma,r} :  \mathbb{R}^{R}  \to  \bigoplus_{F \in \Gamma}  \mathbb{R}^{R_F}$$
$$u  \mapsto  (u|_{F_1}, u|_{F_2}, \ldots ).$$
From the linear transformation $\pi_{\Gamma,r}$, we can extract the matrix $A_{\Gamma,r}$ representing it.  This matrix $A_{\Gamma,r}$ is called the \textit{design matrix} of the hierarchical model associated to $\Gamma$ in algebraic statistics \cite{Drton2009}. 

Associated to any linear transformation $A:  \Z^r \to \Z^d$ is the lattice $\ker_\Z A$.  Among the many important spanning sets for a lattice are the Markov bases, which are special sets that allow one to take random walks over the fibers $(u + \ker_\Z A) \cap \N^r$.

\begin{defn}
A \emph{Markov basis} for the matrix $A$ (or lattice $\ker_\Z A$) is a finite subset $\mathcal{B} \subset \ker_\Z(A)$ such that for all $u, v \in \N^r$ with $A u = A v$, there exists a sequence $b_1, \ldots, b_L \in \mathcal{B}$ such that
$$u = v + \sum_{i =1}^L b_i  \quad \quad \mbox{and} \quad \quad  v +  \sum_{i =1}^l b_i  \in \N^r, \quad l = 1, 2, \ldots, L.$$
Elements of a Markov basis are called \emph{moves}.  A Markov basis for $A$ is \emph{minimal} if no proper subset is a Markov basis of $A$.  
\end{defn}

Markov bases of the matrices $A_{\Gamma,r}$ are useful for performing statistical hypothesis tests by running random walks over contingency tables (see \cite{Diaconis1998} or Chapter 1 in \cite{Drton2009}).  Note that Markov bases are not in general unique, even if we assume the Markov basis is minimal.

One of the main mathematical questions  about Markov bases of hierarchical models is the following: How does the structure of the Markov basis depend on $\Gamma$ and $r_1, \ldots, r_m$?   A specific problem of this type is to determine what finiteness properties of the Markov bases should be expected  when we fix $\Gamma$ and send one or more values of $r_i \to \infty$. Questions about finiteness for these Markov bases are natural in our setting because the lattice $\ker_\mathbb{Z} A_{\Gamma,r}$ is stable under the action of the product of symmetric groups $\S_{r_1} \times \cdots \times \S_{r_m}$, where $\S_{r_i}$ acts by permuting the $i$th index.  Furthermore, given any $\Gamma$, $r \in \P^{m}$, and $t \in \N^{m}$, a vector $b \in \ker_{\Z} A_{\Gamma,r}$ can be naturally lifted into $\ker_{\Z} A_{\Gamma,r+t}$ by padding with zeroes.  Denote the resulting vector by ${\rm pad}_{r+t}(b)$.

We now make precise the notion of sending some $r_{i} \to \infty$.  First, fix a collection of indices $T \subseteq [m]$ which will ``go to infinity''.  For each fixed set of values $r_{i}$ with $i \in [m] \setminus T$, we consider the Markov bases of the matrices $A_{\Gamma,r}$ (here, $r_{i}$ is allowed to be arbitrary when $i \in T$).  We have \emph{finite Markov bases up to symmetry} in this situation if for every fixed set of values $r_{i}$ with $i \in [m] \setminus T$, there exist $r_{i}$ with $i \in T$ and a finite set of moves $\mathcal{B} \subseteq \ker_{\Z} A_{\Gamma,r}$, such that for all $t \in \N^{m}$ with $t_{i}= 0$ for $i \in [m]\setminus T$, the set  
$$\S_{r_1+t_{1}} \times \cdots \times \S_{r_m+t_{m}} \cdot \{ {\rm pad}_{r+t}(b) : b \in \mathcal{B} \} $$
is a Markov basis for $A_{\Gamma,r+t}$.  Otherwise, there is no finite Markov basis up to symmetry.

We represent two examples illustrating that in some situations the Markov basis is finite up to symmetry and in other cases it is not.

\begin{ex}
Let $\Gamma = \{ \{1\}, \{2\} \}$.  Then $\pi_\Gamma:  \Z^{[r_1] \times [r_2]} \to \Z^{[r_1]} \oplus\Z^{[r_2]}$ is the map that computes the row and column sums of an $r_1 \times r_2$ table.  Thus $\ker_\Z A_\Gamma$ consists of all integral tables whose row and column sums are equal to zero.

If both $r_1, r_2 \geq 2$, the Markov basis for this model consists of  the $2 {r_1 \choose 2} {r_2 \choose 2}$ moves:
$$\mathcal{B} =  \left\{ e_{i_1 j_1} + e_{i_2 j_2} - e_{i_1 j_2} - e_{i_2 j_1} : i_1, i_2 \in [r_1], j_1, j_2 \in [r_2] \right\}.$$
For example, for $r_1 = 3, r_2 = 4$, a typical element in $\mathcal{B}$ is the $3 \times 4$ table
$$
\begin{pmatrix}
1 & 0 & -1 & 0 \\
-1 & 0 & 1 & 0 \\
0 & 0 & 0 & 0 \\
\end{pmatrix}.
$$
Up to the natural action of $\S_{r_1} \times \S_{r_2}$, permuting rows and columns of the matrices,  there is only one move in the Markov basis \cite{Diaconis1998}. \qed
\end{ex}

On the other hand, these types of finite Markov basis descriptions are known not to hold for general $\Gamma$ when we let many of the numbers $r_i \to \infty$.

\begin{ex}\label{ex:no3way}
Let $\Gamma = \{\{1,2\},\{1,3\},\{2,3\} \}$ be the three cycle hierarchical model (also called the model of no 3-way interaction).  Then $\pi_\Gamma : \Z^{[r_1] \times [r_2] \times [r_3]} \to \Z^{[r_1] \times [r_2]} \oplus \Z^{[r_1] \times [r_3]} \oplus \Z^{[r_2] \times [r_3]}$ is the map that computes all $2$-way marginals of the three way table $u$.  For all $m$, the move
$$ \sum_{i =1}^m (e_{i,i,1} - e_{i,i,2})  + e_{m,1,2} - e_{m,1,1} + \sum_{i = 1}^{m-1} (e_{i,i+1,2} - 
e_{i,i+1,1}) $$
belongs to every minimal Markov basis for $\Gamma$ for all $r_1, r_2 \geq m$ and $r_3 \geq 2$  \cite{Diaconis1998}.  When $r_{3}= 2$, these Markov basis elements can be represented as two $r_{1} \times r_{2}$ matrices obtained from extracting slices where $i_{3} =1$ and $i_{3} = 2$ respectively.  When $r_{1}= r_{2} = 5$, the Markov basis element is: 
\begin{equation*}
\begin{pmatrix}
1 & -1 & 0  &  0 &0  \\
0 & 1 & -1 &  0 & 0 \\
0 & 0 & 1 & -1 & 0  \\
0 & 0 & 0 &  1 & -1 \\
-1 & 0 & 0 & 0 & 1 
\end{pmatrix}
\quad
\begin{pmatrix}
-1 & 1 & 0 &   0 &0  \\
0 & -1 & 1 &  0 & 0 \\
0 & 0 & -1 &  1 & 0  \\
0 & 0 & 0 &  -1 & 1 \\
1 & 0 & 0 &  0 & -1 
\end{pmatrix}.
\end{equation*}

In particular, Markov bases for $\Gamma$ are not finite up to $\S_{r_{1}} \times \S_{r_{2}}\times \S_{r_{3}}$ symmetry on $r_{1} \times r_{2} \times r_{3}$ arrays for $r_3 \geq 2$ as $r_1$ and  $r_2$ both tend to infinity.  \qed
\end{ex}

These two examples illustrate a dichotomy between cases where we send more than one of the $r_i \to \infty$.  In some situations the Markov basis is finite up to symmetry, and in other cases it is not.   If we only send one of the $r_i$ to infinity, however, there is always a finite Markov basis up to symmetry \cite{Hosten2007,Santos2003}:

\begin{thm}\label{thm:finitemarkov}
For any $\Gamma$ and fixed $r_1, \ldots, r_{m-1}$, there exists an $N = N(\Gamma; r_1, \ldots, r_{m-1})$ such that the Markov basis for $A_\Gamma$ for $r_1,  \ldots r_m$ with $r_m > N$ is determined up to symmetry by the Markov basis for $r_1, r_2, \ldots, r_{m-1}, N$.  
\end{thm}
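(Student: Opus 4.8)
The plan is to translate the statement into a question about ascending chains of symmetric ideals and then apply Corollary~\ref{cor:snfinite}. Fix $\Gamma$ and $r_1,\dots,r_{m-1}$, put $r := r_1 r_2\cdots r_{m-1}$, and write $R' = [r_1]\times\cdots\times[r_{m-1}]$; choosing a bijection $R'\leftrightarrow[r]$ identifies the table index set $R'\times[n]$ (with $n$ playing the role of $r_m$) with $[r]\times[n]$, so that $\K[X_R]\cong\K[X_{[r]\times[n]}]$ and the group $\S_{[n]}$ permuting the last coordinate acts exactly as in Corollary~\ref{cor:snfinite}. For each $n\in\P$, let $L_n := \ker_\Z A_\Gamma\subseteq\Z^{[r]\times[n]}$ be the lattice of the hierarchical model with parameters $r_1,\dots,r_{m-1},n$, and let $I_n\subseteq\K[X_{[r]\times[n]}]$ be the lattice ideal $\<x^{u^+}-x^{u^-} : u\in L_n\>$. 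Because $\ker_\Z A_\Gamma$ is always saturated, $I_n$ coincides with the toric ideal $I_{A_\Gamma}$, so the Fundamental Theorem of Markov Bases \cite{Diaconis1998} says that a finite $\mathcal B\subseteq L_n$ is a Markov basis for $A_\Gamma$ (with $r_m=n$) precisely when the associated binomials generate $I_n$. Finally, $L_n$ is stable under the $\S_{[n]}$-action permuting the last index, so each $I_n$ is an $\S_{[n]}$-invariant ideal.

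Next I would verify that the $I_n$ form an ascending $\S_{[n]}$-invariant chain in the sense of Corollary~\ref{cor:snfinite}. The inclusion $\K[X_{[r]\times[n]}]\hookrightarrow\K[X_{[r]\times[m]}]$ for $n\le m$ corresponds to padding a table on $[r]\times[n]$ with zeros in the $m-n$ new slices. Since the new entries vanish, each $\Gamma$-marginal of the padded table is either literally unchanged (when $m\notin F$: the new entries add $0$) or supported on the old coordinates and zero on the new ones (when $m\in F$); hence padding carries $L_n$ into $L_m$, and therefore $I_n\subseteq I_m$ under the inclusion. Combining this with the $\S_{[m]}$-stability of $L_m$ gives $\<\S_{[m]}I_n\>_{\K[X_{[r]\times[m]}]}\subseteq I_m$ for all $n\le m$, which is exactly the invariant-chain hypothesis.

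Finally, I would invoke Corollary~\ref{cor:snfinite} (whose ``$r$'' is our $r=r_1\cdots r_{m-1}$) to obtain an $n_0 = n_0(\Gamma;r_1,\dots,r_{m-1})$ with $\<\S_{[n]}I_{n_0}\>_{\K[X_{[r]\times[n]}]}=I_n$ for every $n>n_0$. Choosing the generators of $I_{n_0}$ to be the binomials of a Markov basis $\mathcal B_{n_0}$ for $r_1,\dots,r_{m-1},n_0$ (lattice ideals always admit binomial generating sets, and multiplying a binomial by a monomial leaves its underlying lattice vector unchanged), this says that the binomials attached to $\{\sigma\cdot b : \sigma\in\S_{[n]},\,b\in\mathcal B_{n_0}\}$ generate $I_n$; by the Fundamental Theorem the $\S_{[n]}$-orbit of $\mathcal B_{n_0}$ is then a Markov basis for $r_1,\dots,r_{m-1},n$, so $N=n_0$ works. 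The step needing the most care will be this last dictionary between module generation and Markov bases: one must be sure that a $\K[X_{[r]\times[n]}]\ast\S_{[n]}$-generating set of the lattice ideal can be taken to consist of binomials coming from honest lattice vectors, so that Diaconis--Sturmfels converts ``finitely generated up to symmetry'' into ``finite Markov basis up to symmetry.'' By contrast, the marginal computation in the middle paragraph — the only place the combinatorics of hierarchical models genuinely enters — and everything else are routine given Section~\ref{Pgroebner}.
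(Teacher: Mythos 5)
Your proposal is correct and follows essentially the same route as the paper: reduce via the fundamental theorem of Markov bases, identify the rings with $\K[X_{[r]\times[n]}]$ for $r=r_1\cdots r_{m-1}$, check that padding with zero slices makes the toric ideals an ascending $\S_{[n]}$-invariant chain, and invoke Corollary~\ref{cor:snfinite}. The extra care you flag about choosing binomial (lattice-vector) generators when translating back to Markov bases is a fine point the paper leaves implicit, but it does not change the argument.
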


We provide a new proof of Theorem \ref{thm:finitemarkov} below.  An important ingredient will be the fundamental theorem of Markov bases, which translates questions about Markov bases into questions about generating sets of toric ideals.

Given any matrix $A = (a_{ij}) \subseteq \mathbb{Z}^{k \times r}$, consider the ring homomorphism:
$$\phi:  \mathbb{K}[x_1, \ldots, x_r]  \rightarrow \mathbb{K}[y_1^{\pm 1}, \ldots, y_k^{\pm 1}], \quad \quad  x_j \mapsto  \prod_{i =1}^k  y_i^{a_{ij}}.$$
The kernel of $\phi$ is the \textit{toric ideal} $I_A := \ker \phi.$
The ideal $I_A$ is a prime ideal that gives an algebraic encoding of the integer kernel of the matrix $A$  since
$$I_A  = \left<  x^u - x^v : u,v \in \N^r, \, \,  Au =Av \right>.$$
Note that $\K[X]/I_{A}$ is a semigroup ring, the ring generated by the monomials $\phi(x_{1}), \ldots, \phi(x_{r})$.

The following theorem establishes the connection between Markov bases of the lattice $\ker_\Z A$ and the toric ideal $I_A$.  (Below, the vectors $b^{+} \in \N^r$ and $b^{-} \in \N^r$ are the nonnegative and nonpositive part, respectively, of $b = b^{+} - b^{-} \in \mathcal{B}$).

\begin{thm}[Fundamental theorem of Markov bases] \cite{Diaconis1998}
A finite subset $\mathcal{B} \subseteq \ker_\mathbb{Z} A$ is a Markov basis for $A$ if and only if the set of binomials
$$\left\{  x^{b^+} - x^{b^-} :  b \in \mathcal{B} \right\}$$
is a generating set of the toric ideal $I_A$.
\end{thm}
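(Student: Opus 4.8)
The plan is to prove the two implications separately: one direction turns a combinatorial Markov walk into a telescoping sum of binomials, the other recovers such a walk from an ideal-membership relation. Throughout I write $g_b := x^{b^+} - x^{b^-}$ for $b \in \mathcal{B}$, and I use the presentation $I_A = \langle x^u - x^v : u,v \in \N^r,\ Au = Av\rangle$ recalled just above the theorem. This is the classical argument of Diaconis and Sturmfels \cite{Diaconis1998}, which I would lay out as follows.

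\emph{If $\mathcal{B}$ is a Markov basis, then the $g_b$ generate $I_A$.} Since $b \in \ker_\Z A$ gives $A b^+ = A b^-$, each $g_b$ lies in $I_A$, so $\langle g_b : b \in \mathcal{B}\rangle \subseteq I_A$. For the reverse inclusion it suffices, by the presentation of $I_A$, to place every binomial $x^u - x^v$ with $Au = Av$ into $\langle g_b : b \in \mathcal{B}\rangle$. The Markov property supplies $b_1,\dots,b_L \in \mathcal{B}$ with $u = v + \sum_{i=1}^L b_i$ and all partial sums $w_l := v + \sum_{i=1}^l b_i$ lying in $\N^r$. Telescoping gives $x^u - x^v = \sum_{l=1}^L\bigl(x^{w_l} - x^{w_{l-1}}\bigr)$, and for each $l$ the elementary identity $x^{w_l} - x^{w_{l-1}} = x^{w_l \wedge w_{l-1}}\, g_{b_l}$, where $w_l \wedge w_{l-1}$ is the componentwise minimum (which equals $w_l - b_l^+ \geq 0$ since $w_l - w_{l-1} = b_l$), exhibits the summand as a monomial multiple of $g_{b_l}$. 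Hence $x^u - x^v$ lies in the ideal generated by the $g_b$.

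\emph{If the $g_b$ generate $I_A$, then $\mathcal{B}$ is a Markov basis.} Here I would introduce the relation $\sim$ on $\N^r$, declaring $w \sim w'$ whenever some sequence of moves from $\mathcal{B}$ (used in either direction) connects $w$ to $w'$ without ever leaving $\N^r$. This $\sim$ is an equivalence relation; it refines the fibers of $A$; it is translation-invariant ($w \sim w' \Rightarrow w + c \sim w' + c$, by shifting every intermediate point by $c$); and $b^+ \sim b^-$ for all $b \in \mathcal{B}$. Set $J := \operatorname{span}_\K\{x^a - x^b : a \sim b\}$. Translation-invariance makes $J$ an ideal, it contains every $g_b$, so the hypothesis yields $I_A \subseteq J$; conversely $a \sim b$ forces $Aa = Ab$, so $J \subseteq I_A$, whence $J = I_A$. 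Now the $\K$-linear map $\psi$ from $\K[x_1,\dots,x_r]$ to the free vector space on the set of $\sim$-classes, $\psi(x^w) = e_{[w]}$, satisfies $\ker\psi = J$: the containment $J \subseteq \ker\psi$ is clear, and for the reverse one groups the monomials of a kernel element by $\sim$-class (within each class the coefficients sum to zero) and writes each group as a combination of differences $x^w - x^{w_0}$ with $w \sim w_0 \in J$. Finally, given $u,v \in \N^r$ with $Au = Av$ we get $x^u - x^v \in I_A = J = \ker\psi$, so $u \sim v$, and unwinding the definition of $\sim$ produces exactly a Markov walk from $v$ to $u$.

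The forward direction is a routine telescoping, so I expect the obstacle to lie in the reverse direction, specifically in the identity $\ker\psi = J$: the content is that membership of $x^u - x^v$ in the binomial ideal genuinely detects the combinatorial equivalence $u \sim v$ rather than some incidental $\K$-linear cancellation. This rests on $\sim$ being translation-invariant, so that $J$ is an honest ideal and not merely a subspace, together with the class-by-class bookkeeping showing the $\sim$-classes descend to a $\K$-basis of $\K[x_1,\dots,x_r]/J$. One should also fix at the outset the standard convention that a move and its negation are interchangeable, so that $\sim$ is symmetric and matches the Markov-walk definition verbatim.
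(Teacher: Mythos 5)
Your proof is correct, and it is precisely the classical Diaconis--Sturmfels argument: the paper itself states this theorem only with a citation to \cite{Diaconis1998} and supplies no proof, so there is nothing in the paper to compare against beyond confirming that your telescoping direction and your equivalence-relation argument (with $J=\operatorname{span}_\K\{x^a-x^b : a\sim b\}$ shown to be an ideal equal to $\ker\psi$) are the standard route. The one point to pin down against the paper's literal Definition 4.1 is the sign convention on moves (the paper writes walks using $b_i\in\mathcal{B}$ rather than $\pm b_i$), which you already flag and which affects nothing since $g_b$ and $g_{-b}$ generate the same ideal.
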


\begin{proof}[Proof of Theorem \ref{thm:finitemarkov}]
Applying the fundamental theorem of Markov bases, it suffices to show that the associated toric ideals are finitely generated up to symmetry.
For each value of $r_m \in \P$, let $A_{r_m}$ be the matrix representing the linear transformation $\pi_\Gamma$ for a table of size $r_1, \ldots, r_m$.  That is $A_{r_{m}} = A_{\Gamma,r}$, but where we are paying special attention to the changing value of $r_{m}$.  Each of the ideals $I_{A_{r_m}}$ is contained in $\K[X_{R_{[m-1]} \times [r_m]}]$. Taking $k = \prod_{i =1}^{m-1} r_i$ and identifying $\prod_{i =1}^{m-1} [r_i]$ with $[k]$, we see that each ideal naturally lies in $\K[X_{[k] \times [r_m]}]$.  Furthermore, each ideal $I_{A_{r_m}}$ is stable under the action of $\S_{r_m}$.  On tables, $\S_{r_m}$ acts by permuting ``slices'' of the table.  The ideals are also nested:  
$$  \left< \S_{r_{m}+1}  I_{A_{r_{m}}} \right>_{\K[X_{[k] \times[r_{m}+1]}]} \subset I_{A_{r_{m}+1}},$$
which on the level of tables corresponds to the fact that we can always add a slice of all zeroes to an element $b \in \ker_\Z A_{r}$ and obtain an element $b' \in \ker_\Z A_{r+1}$.  Thus, the sequence of ideals $I_{A_1}, I_{A_2}, \ldots$ forms an ascending invariant chain.  Therefore, by Corollary \ref{cor:snfinite} they have a finite generating set up to the filtration of $\S_{(\mathbb{P})}$ by  the $\S_{r_m}$. 
\end{proof}

Our new proof of Theorem \ref{thm:finitemarkov} has the advantage over the proofs from \cite{Hosten2007, Santos2003} that it puts these finiteness properties into a very general framework.  On the other hand, the older proofs produce bounds on the number $N(\Gamma; r_1, \ldots, r_{m-1})$.  Part of the reason for introducing our more general framework is that it can produce finiteness results in situations where the ideas from \cite{Hosten2007, Santos2003} do not generalize.   The technique in \cite{Hosten2007, Santos2003} is to show that the universal Gr\"obner basis is finite up to $\S_\P$ symmetry, which implies finite generation up to symmetry (a \textit{universal Gr\"obner basis} is a set of polynomials that is a Gr\"obner basis with respect to every term order).  That idea does not work in the more general settings considered below because the universal Gr\"obner basis is not, in general, finite up to symmetry (e.g.~the universal Gr\"obner basis of the ideal of $2\times 2$ minors in $\K[X_{[k] \times [k]}]$ requires polynomials of degree $k$).

Example \ref{ex:no3way} shows that there cannot be a general finiteness result when two or more of the $r_i$ are sent to infinity.  However, we can still produce finiteness theorems when some of the $r_i \to \infty$ and $\Gamma$ satisfies some extra properties.

\begin{defn}
A subset $T \subseteq [m]$ is called an \emph{independent subset} of $\Gamma$ if $\#(T \cap F) \leq 1$ for all $F \in \Gamma$.
\end{defn}

Equivalently, the independent subsets of $\Gamma$ are precisely the independent sets of the $1$-skeleton of $\Gamma$ (that is, of the underlying graph).

The main theorem of this section is a finiteness property for Markov bases in models $\Gamma$ that have independent vertex sets.  This provides a proof of the independent set conjecture of Ho\c{s}ten and the second author \cite[Conj. 4.6]{Hosten2007}. 

\begin{thm}\label{thm:independentset}
Let $\Gamma \subseteq 2^{[m]}$, and suppose that $T \subseteq [m]$ is an independent set of $\Gamma$.  Fix the table dimensions $r_s$ such that $s \in [m] \setminus T$.  Then $A_\Gamma$ has a finite Markov basis up to the natural action of $\S_{r_1} \times \cdots \times \S_{r_m}$ as $r_t \to \infty$ for all $t \in T$. 
\end{thm}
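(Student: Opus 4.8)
The plan is to encode the Markov basis question as a toric-ideal generation problem, observe that the relevant family of toric ideals forms an invariant chain over a ring of the form $\K[X_{[r] \times \P}]$ for $r$ depending only on the fixed coordinates, and then apply the chain-stabilization machinery of Section \ref{Pgroebner} — specifically Corollary \ref{cor:snfinite}, possibly in the divisible-quotient form of Corollary \ref{cor:divisible} and Proposition \ref{prop:quotient}. By the fundamental theorem of Markov bases, it suffices to show that the toric ideals $I_{A_\Gamma}$, as the coordinates indexed by $T$ grow, are generated up to $\S_{r_1} \times \cdots \times \S_{r_m}$ symmetry by finitely many binomials. Since $\#(T \cap F) \le 1$ for every $F \in \Gamma$, each marginal $u|_F$ sees at most one coordinate from $T$; this is the structural input that makes the growing directions behave like a single ``column'' index in the sense of Theorem \ref{thm:basicpoly}.

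First I would reduce to the case $\#T = 1$: if $T = \{t_1,\dots,t_c\}$ is independent, one can enlarge one independent coordinate at a time, so it is enough to treat growth in a single $t \in T$ while all other $r_s$ (including the other elements of $T$) are held fixed — the argument then iterates. So fix $t \in T$, write $\P$ for the index set of coordinate $t$, and collect all remaining coordinates into a single composite index ranging over a fixed finite set of size $r = \prod_{s \ne t} r_s$; after identifying that product with $[r]$, each table lives in $\K[X_{[r] \times \P}]$ and $I_{A_\Gamma}$ (for the value $r_t = n$) is an ideal $I_n \subseteq \K[X_{[r] \times [n]}]$ stable under $\S_{[n]}$, the symmetric group permuting the $\P$-index. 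The key point to verify is that the chain is \emph{invariant}: $\langle \S_{[m]} I_n \rangle \subseteq I_m$ for $n \le m$, i.e. that adding a ``slice'' of zeros in coordinate $t$ (and relabelling) carries kernel elements of $A_\Gamma$ at level $n$ to kernel elements at level $m$. This is where independence of $T$ is used crucially: because no $F \in \Gamma$ contains two elements of $T$, the design matrix at level $m$ restricted to tables supported on the first $n$ slices agrees with the design matrix at level $n$ (padding with zeros does not disturb any marginal that already involved coordinate $t$, and marginals not involving $t$ are literally unchanged), so $\ker_\Z A_{\Gamma,n} \hookrightarrow \ker_\Z A_{\Gamma,m}$ compatibly with the group actions. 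Hence $I_1 \subseteq I_2 \subseteq \cdots$ is an $\S_{[n]}$-invariant ascending chain in the sense of Corollary \ref{cor:snfinite}.

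Having set this up, Corollary \ref{cor:snfinite} immediately yields an $n_0$ with $\langle \S_{[n]} I_{n_0} \rangle_{\K[X_{[r] \times [n]}]} = I_n$ for all $n > n_0$; translating back through the fundamental theorem of Markov bases gives a finite Markov basis for $A_\Gamma$, for $r_t$ large, generated up to $\S_{r_1} \times \cdots \times \S_{r_m}$ by the Markov basis at $r_t = n_0$, which is the assertion (for $\#T = 1$). Iterating over the elements of $T$ finishes the general case, since at each stage the ``fixed'' coordinates simply absorb the already-stabilized ones into the composite $[r]$-index. (If one prefers not to fatten the index set at each iteration — which makes $r$ grow — one can instead work directly in $\K[X_{\P^{T} \times (\text{fixed})}]$ and invoke Corollary \ref{cor:divisible} together with Proposition \ref{prop:quotient} to handle the multi-parameter chain, exactly as in Example \ref{divisNoetherianexample}; but the one-at-a-time reduction avoids this.) The main obstacle, and the step deserving the most care, is the verification that the chain of toric ideals is genuinely $\S$-invariant and ascending — that is, pinning down precisely how the hypothesis $\#(T \cap F) \le 1$ guarantees the compatible embeddings $\ker_\Z A_{\Gamma,n} \hookrightarrow \ker_\Z A_{\Gamma,m}$; everything after that is a direct appeal to Corollary \ref{cor:snfinite}.
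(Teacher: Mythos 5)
Your reduction to $\#T = 1$ followed by iteration is the gap, and it is fatal. For a single growing coordinate (all others fixed), your argument is just Theorem \ref{thm:finitemarkov}, which holds for \emph{every} $\Gamma$ with no independence hypothesis --- note in particular that the chain invariance you single out as the ``crucial'' use of independence (padding a kernel element by a zero slice in coordinate $t$) is true for arbitrary $\Gamma$ and is exactly what the proof of Theorem \ref{thm:finitemarkov} already uses. If your iteration scheme were valid, it would therefore prove finiteness up to symmetry whenever any subset of the $r_i$ is sent to infinity, for any $\Gamma$; this contradicts Example \ref{ex:no3way}, where the no--$3$-way-interaction model has Markov basis elements of unbounded size as $r_1, r_2 \to \infty$. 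The precise failure of the iteration is that the stabilization bound $n_0$ produced by Corollary \ref{cor:snfinite} when you grow $r_{t_1}$ is a function of the currently fixed values of the remaining coordinates, including $r_{t_2}, \dots, r_{t_c}$; when you next let $r_{t_2}$ grow, that bound $n_0(r_{t_2})$ can grow without bound, so you never obtain a single finite set of moves valid for all simultaneously large $r_t$, $t \in T$, which is what the theorem asserts. Stabilization in each coordinate separately does not imply stabilization of the multi-parameter family, and Example \ref{ex:no3way} shows it genuinely does not.

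Your parenthetical fallback (work directly in the multi-index ring and invoke Corollary \ref{cor:divisible} and Proposition \ref{prop:quotient}) points toward the correct route but omits its essential content. The limiting ring $\K[X_{R_{[m]\setminus T} \times \P^{\#T}}]$ is \emph{not} Noetherian over the relevant skew-monoid ring (Example \ref{ex:notnoth}), so one cannot run the chain-stabilization machinery on $I_{A_r}$ directly; one must first mod out by a suitable subideal. This is where independence of $T$ actually enters: it guarantees $\Gamma \subseteq \Gamma'$ for the decomposable complex $\Gamma' = \{([m]\setminus T)\cup\{t\} : t \in T\} \cup 2^{[m]\setminus T}$, hence $I_{B_r} \subseteq I_{A_r}$ by Lemma \ref{lem:contain}; the ideals $I_{B_r}$ are finitely generated up to symmetry by quadrics (Theorem \ref{thm:decomp}); and the quotient $\K[Q]/J_{\Gamma'}$ is a divisible $\Pi$-invariant Segre-product subring of a polynomial ring with a single growing index, so Corollary \ref{cor:divisible} gives stabilization of the chain $I_{A_r}/I_{B_r}$, after which Proposition \ref{prop:quotient} concludes. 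To repair your proof you must replace the one-at-a-time iteration by this quotient argument, identifying $\Gamma'$, verifying $I_{B_r} \subseteq I_{A_r}$ via independence, and checking divisibility of the quotient semigroup --- none of which your proposal supplies.
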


Proving Theorem \ref{thm:independentset} requires two intermediate results.  First of all, we shall need to understand the relationships between toric ideals $I_{A_\Gamma}$ for varying $\Gamma$.  Secondly, we will need to understand an important family of $\Gamma$ that are called decomposable.

One simplification we can make about $\Gamma$ is to assume it is a simplicial complex; that is, if $S \in \Gamma$ and $T \subseteq S$ then $T \in \Gamma$ as well.  We may make this assumption without loss of generality since ``the marginal of a marginal is a marginal''. In other words, adding $T$ to $\Gamma$ when $S \in \Gamma$ and $T \subseteq S$ does not change $\ker A_\Gamma$.

\begin{lemma}\label{lem:contain}
Suppose that $\Gamma_1 \subseteq \Gamma_2$, in the sense that for each $S \in \Gamma_1$, there is a $T \in \Gamma_2$ such that $S \subseteq T$.  Then $\ker A_{\Gamma_2}  \subseteq \ker A_{\Gamma_1}$ and the toric ideals satisfy $I_{A_{\Gamma_2}} \subseteq I_{A_{\Gamma_1}}$.
\end{lemma}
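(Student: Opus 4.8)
The plan is to exhibit the marginal map $\pi_{\Gamma_1}$ as a linear factor of $\pi_{\Gamma_2}$, from which both the lattice inclusion and the toric ideal inclusion follow by standard functoriality. First I would observe that, since $\Gamma_1 \subseteq \Gamma_2$ in the stated sense, for each $S \in \Gamma_1$ there is a chosen $T = T(S) \in \Gamma_2$ with $S \subseteq T$. For such a containment $S \subseteq T$ of subsets of $[m]$, taking the $S$-marginal factors through the $T$-marginal: for any index vector $\mathbf{i} \in R$ we have $(\mathbf{i}_T)_S = \mathbf{i}_S$, so the linear map $\mathbb{C}^{R_T} \to \mathbb{C}^{R_S}$ sending $e_{\mathbf{j}} \mapsto e_{\mathbf{j}_S}$ composes with $u \mapsto u|_T$ to give $u \mapsto u|_S$. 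Call this map $\rho_{T,S}\colon \mathbb{C}^{R_T}\to\mathbb{C}^{R_S}$. Assembling these over all $S \in \Gamma_1$ produces a linear map
\[
\rho\colon \bigoplus_{T \in \Gamma_2} \mathbb{C}^{R_T} \longrightarrow \bigoplus_{S \in \Gamma_1} \mathbb{C}^{R_S}, \qquad \rho = \bigoplus_{S \in \Gamma_1} \rho_{T(S),S} \circ (\text{projection to the } T(S)\text{ summand}),
\]
and by construction $\pi_{\Gamma_1} = \rho \circ \pi_{\Gamma_2}$. On the level of integer matrices this says $A_{\Gamma_1} = B \, A_{\Gamma_2}$ for the integer matrix $B$ representing $\rho$ (note $\rho_{T,S}$ has entries in $\{0,1\}$, so $B$ is integral).

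From the factorization $A_{\Gamma_1} = B A_{\Gamma_2}$ the first claim is immediate: if $v \in \ker_\Z A_{\Gamma_2}$, then $A_{\Gamma_1} v = B A_{\Gamma_2} v = 0$, so $\ker_\Z A_{\Gamma_2} \subseteq \ker_\Z A_{\Gamma_1}$. For the toric ideals, I would use the description $I_A = \langle x^u - x^v : u,v \in \N^r,\ Au = Av\rangle$ recalled just above the statement. If $A_{\Gamma_2} u = A_{\Gamma_2} v$ for $u,v \in \N^r$, then $A_{\Gamma_1} u = B A_{\Gamma_2} u = B A_{\Gamma_2} v = A_{\Gamma_1} v$, so every binomial generator $x^u - x^v$ of $I_{A_{\Gamma_2}}$ is also a generator of $I_{A_{\Gamma_1}}$; hence $I_{A_{\Gamma_2}} \subseteq I_{A_{\Gamma_1}}$.

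There is essentially no hard step here; the one point requiring a little care is that $\Gamma_1 \subseteq \Gamma_2$ is meant in the coarsening sense (each face of $\Gamma_1$ sits inside some face of $\Gamma_2$), not as literal set containment, so one must make the choice $T(S)$ and check that the marginal-of-a-marginal identity $(\mathbf{i}_T)_S = \mathbf{i}_S$ does the work — this is exactly the remark ``the marginal of a marginal is a marginal'' already made in the text. One should also note the argument is independent of which valid choice of $T(S)$ is made, since the conclusion only concerns inclusions. Everything else is routine linear algebra over $\Z$ together with the binomial description of toric ideals.
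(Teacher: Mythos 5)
Your proof is correct and follows essentially the same route as the paper: the key identity $u|_S = (u|_T)|_S$ (``the marginal of a marginal is a marginal'') shows that $\pi_{\Gamma_1}$ factors through $\pi_{\Gamma_2}$, which the paper states tersely and you spell out via the matrix factorization $A_{\Gamma_1} = B\,A_{\Gamma_2}$ and the binomial description of $I_A$. No changes needed.
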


\begin{proof}
If $S \subseteq T$, then $u|_S  =  (u|_T) |_S$.  Thus, if $\Gamma_1 \subseteq \Gamma_2$, the marginal map $\pi_{\Gamma_1}$ factors through $\pi_{\Gamma_2}$.
\end{proof}

A simplicial complex $\Delta$ has a \emph{reducible decomposition} $(\Delta_1, S, \Delta_2)$ if $\Delta = \Delta_1 \cup \Delta_2$, $\Delta_1 \cap \Delta_2 = 2^{S}$ (where $2^S$ is the power set of $S$), and neither $\Delta_1$ nor $\Delta_2 = 2^S$.  A simplicial complex with a reducible decomposition is called \emph{reducible}.  A simplicial complex is \emph{decomposable} if it is either a simplex (of the form $2^K$) or it is reducible and both $\Delta_1$ and $\Delta_2$ are decomposable.  The following theorem characterizes the generating sets of the toric ideals $I_{A_\Gamma}$ whenever $\Gamma$ is a decomposable simplicial complex.

\begin{thm}\cite{Dobra2003, Takken2000}\label{thm:decomp}
If $\Gamma$ is a decomposable simplicial complex, then $I_{A_\Gamma}$ is generated by quadratic binomials.  As $r_1, \ldots, r_m \to \infty$, there is a finite set of quadratic binomials that generate $I_{A_\Gamma}$ up to the action of $\S_{r_1} \times \cdots \times \S_{r_m}$.  Furthermore, let $T \subseteq [m]$ and fix the table dimensions $r_s$ where $s \in [m] \setminus T$, and let $r_t = r$ for all $t \in T$.  Let $\S_{r}$ act diagonally on $[r]^{\#T}$.  Then, the generators of $I_{A_\Gamma}$ stabilize up to the action of $\S_{r}$ after $r \geq 2 \#T$.
\end{thm}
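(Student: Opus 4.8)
The plan is to induct on the decomposable structure of $\Gamma$. The base case is a simplex $\Gamma = 2^K$: here the marginal map $\pi_\Gamma$ is an isomorphism onto a single $\mathbb{C}^{R_K}$, so $I_{A_\Gamma} = 0$ and every assertion is vacuous. For the inductive step, suppose $\Gamma = \Delta_1 \cup \Delta_2$ with $\Delta_1 \cap \Delta_2 = 2^S$ and both $\Delta_i$ decomposable on vertex sets $V_i = V(\Delta_i)$. The key structural fact, which is the content of the classical results of \cite{Dobra2003, Takken2000}, is that a generating set for $I_{A_\Gamma}$ can be assembled from generating sets for $I_{A_{\Delta_1}}$ and $I_{A_{\Delta_2}}$ together with the quadratic binomials expressing conditional independence of the ``$V_1 \setminus S$'' and ``$V_2 \setminus S$'' coordinates given the ``$S$'' coordinates — concretely, binomials of the form $x_{\mathbf{i}} x_{\mathbf{j}} - x_{\mathbf{i}'} x_{\mathbf{j}'}$ where $\mathbf{i}, \mathbf{j}$ and $\mathbf{i}', \mathbf{j}'$ agree on their $S$-entries and are obtained from one another by swapping the $V_1\setminus S$-blocks. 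First I would record this decomposition precisely and observe that by induction each piece is generated by quadratics, hence so is $I_{A_\Gamma}$.

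The finiteness-up-to-symmetry statement is then reduced, via Corollary \ref{cor:snfinite} (or rather its multi-index analogue), to checking that the chain of ideals $I_{A_\Gamma}$, as the $r_i$ grow, forms a $\Pi$-invariant ascending chain inside an appropriate polynomial ring $\K[X_{[k]\times \P^{\#T}}]$ after identifying the frozen indices $\{r_s : s \notin T\}$ with a single bounded index $[k]$. Nesting is Lemma \ref{lem:contain}-style: padding a table with zero slices in the $t$-directions carries kernel elements to kernel elements. So the general finiteness follows from Corollary \ref{cor:snfinite} applied after collapsing $T$ to one growing index, or from the several-index filtration when all $r_i\to\infty$ simultaneously. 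The point worth emphasizing is that here we really do get the stronger conclusion — stabilization of a \emph{generating set}, not merely finite generation — because the quadratic generators have a completely explicit combinatorial description.

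The sharp bound $r \geq 2\#T$ for diagonal stabilization is where the actual work lies, and I expect it to be the main obstacle. The idea is that each quadratic generator of $I_{A_\Gamma}$ involves, in each of the $\#T$ directions $t \in T$, at most two distinct index values (the two that get swapped); so a generator ``sees'' at most $2\#T$ values among the diagonal coordinates total — in fact at most $2$ in each coordinate, and the diagonal action of $\S_{[r]}$ on $[r]^{\#T}$ can move any configuration using at most $2\#T$ symbols into one using only the symbols $\{1, \ldots, 2\#T\}$. Thus once $r \geq 2\#T$, every generator is in the $\S_{[r]}$-orbit of one supported on $[2\#T]^{\#T}$, and increasing $r$ further adds nothing new up to symmetry. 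Making this rigorous requires: (i) a careful normal form for the quadratic generators produced by the decomposable recursion, tracking exactly which index values appear in each of the $\#T$ growing coordinates; (ii) the observation that a quadratic binomial involves two monomials, each a single variable $x_{\mathbf{i}}$, so across the two monomials each coordinate contributes at most two values; and (iii) verifying that the diagonal $\S_{[r]}$-action has enough room to standardize — which is precisely the inequality $r \geq 2\#T$. Steps (i)--(iii) are bookkeeping rather than deep, but getting the constant exactly right (as opposed to some larger function of $\#T$ and the frozen $r_s$) is the delicate part, and I would lean on the explicit generator description from \cite{Dobra2003, Takken2000} to pin it down.
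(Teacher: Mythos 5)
The paper does not prove this statement at all: it is imported wholesale from \cite{Dobra2003, Takken2000} (the quadratic generation and the explicit form of the degree-two moves for decomposable complexes), and the only part that goes beyond those references --- the stabilization bound $r \geq 2\#T$ under the diagonal action --- is exactly the kind of counting observation you make at the end. So your reconstruction is consistent with how the paper treats the result: the reducible-decomposition assembly (pieces plus the conditional-independence swap quadrics for the separator $S$) is the content of the cited works, and your step (ii)--(iii) bookkeeping --- each binomial is a difference of two degree-two monomials whose index vectors use at most two distinct values in each coordinate $t \in T$, hence at most $2\#T$ symbols total, which a single diagonal permutation can push into $[2\#T]$ once $r \geq 2\#T$ --- is the right mechanism for the last sentence and produces the stated constant.

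One step of your middle paragraph would fail as written, though: you propose to get the ``all $r_1,\ldots,r_m \to \infty$'' finiteness from ``Corollary \ref{cor:snfinite} or rather its multi-index analogue.'' No such multi-index analogue exists in general --- Example \ref{ex:notnoth} shows that $\K[X_{\P \times \P}]$ is not Noetherian over $\K[X_{\P\times\P}] \ast (\S_\P \times \S_\P)$, so Noetherianity machinery with several simultaneously growing indices cannot be invoked as a black box (indeed, circumventing exactly this failure is the point of the quotient/divisibility argument in Theorem \ref{thm:independentset}). Corollary \ref{cor:snfinite} legitimately covers only the one-growing-index situation. For several growing indices the finiteness-up-to-symmetry claim must come from the explicit combinatorial form of the quadratic moves themselves (at most two values per coordinate, hence finitely many orbit representatives supported in a fixed window), which you do mention as an aside; that aside should be promoted to the actual argument, and with that repair the proposal is sound, modulo leaning on \cite{Dobra2003, Takken2000} for the decomposable structure theorem exactly as the paper does.
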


\begin{proof}[Proof of Theorem \ref{thm:independentset}]
By the fundamental theorem of Markov bases, it suffices to show that the corresponding toric ideals $I_{A_\Gamma}$ are finitely generated up to symmetry.
It also suffices to show the finiteness result when considering the action of a much smaller group contained inside of $\S_{r_1} \times \cdots \times \S_{r_m}$.  Namely, we will send $r_t \to \infty$ for $t \in T$ simultaneously and consider the diagonal action of $\S_r$ acting on the indices $i_{t}$ with $t \in T$.  This is sufficient because every Markov basis move for a small table embeds as a Markov basis element for a table of larger dimensions, by the padding operation.

For each $r \in \P$, let $I_{A_r}$ be the corresponding toric ideal, which belongs to the ring 
$$\K[Q_r] := \K[X_{R_{[m] \setminus T} \times [r]^{\#T}}].$$
Let $\K[Q]$ denote the limiting ring
$$\K[Q] := \K[X_{R_{[m] \setminus T} \times \P^{\#T}}].$$
Let $\Pi$ act on $\K[Q]$ by acting diagonally on $\P^{\#T}$.  Then the $Q_r$ and $\Pi_{n,r}$ form a filtration of $\K[Q] \ast \Pi$, and the sequence of ideals $I_\circ = I_{A_1} \subseteq I_{A_2} \subseteq \cdots $ is an invariant chain.  Let $J_\Gamma = \mathcal{N}(I_\circ) = \cup_{n\geq1} I_{A_n}$.  Our goal is to show that the chain $I_\circ$ stabilizes.

Consider the following decomposable simplicial complex:
$$\Gamma' =  \left\{  ([m] \setminus T) \cup \{t \} : t \in T \right\} \cup 2^{[m] \setminus T}.$$
For each $r \in \P$, let $I_{B_r}$ be the toric ideal $I_{\Gamma'}$ which is in the ring $\K[Q_r]$.  The $I_{B_r}$ form a chain with respect to the filtration of $\K[Q] \ast \Pi$.  Since $\Gamma'$ is decomposable, this chain stabilizes by Theorem \ref{thm:decomp}. Let $J_{\Gamma'} \subseteq \K[Q]$ denote the union of this chain.
Since $T$ is an independent set of $\Gamma$, we have $\Gamma \subseteq \Gamma'$, which implies  
$I_{B_r} \subseteq I_{A_r}$ by Lemma \ref{lem:contain}.  We now want to apply Corollary \ref{cor:divisible} and Proposition \ref{prop:quotient}  to deduce that the chain $I_\circ$ stabilizes.

For each $r \in P$, the ideal $I_{B_{r}}$ is a toric ideal, and hence $\K[Q_{r}]/I_{B_{r}}$ is a semigroup ring.  The limiting ring $\K[Q]/J_{\Gamma'}$ is also a semigroup ring, and it is generated by all monomials appearing in the ring homomorphisms $\phi$ associated to the matrices $B_{r}$.  This can be explicitly obtained by looking at the effect of the linear transformation $\pi_{\Gamma'}$ on standard unit vectors.  Let $S = [m] \setminus T$.  Then, 
$$
\pi_{\Gamma'}(e_{\bf i}) = \oplus_{t \in T} e_{{\bf i}_{S \cup \{t\}}} \in \bigoplus_{t \in T} \R^{R_{S} \times [r]}.
$$
For each $F \in \Gamma'$ and ${\bf j} \in R_{S} \times [r]$ we have a variable $y^{F}_{\bf j}$.  The formula for $\pi_{\Gamma'}$ implies that for each ${\bf i } \in R$, 
$$\phi(x_{\bf i})  =  \prod_{t \in T} y^{S \cup \{t\}}_{{\bf i}, i_{t}}.$$
This implies that
$$\K[Q]/J_{\Gamma'} =: \K[Q']  \cong \K\left[ \prod_{t \in T} y_{{\bf i}, t, j_t}:  {\bf i} \in R_{S}, \, \,   j_t \in \P \right].$$
In particular, $\K[Q']$ is a subsemigroup ring of $\K[Y_{ R_{[m] \setminus T} \times T \times \P }]$.  (This is obtained by replacing the cumbersome superscript $S \cup \{t\}$ with a simple $t$ subscript.)  

We now show that $\K[Q']$ is a divisible subsemigroup ring of $\K[Y_{ R_{[m] \setminus T} \times T \times \P }]$.  Consider the $\K$-algebra homomorphism $\psi$  from $\K[Y_{ R_{[m] \setminus T} \times T \times \P }]$ to $\K[Z_{ R_{[m] \setminus T} }]$ that maps $y_{{\bf i}, t, j_t}$ to $z_{\bf i}$.  A monomial $y^{\alpha} \in \K[Y_{ R_{[m] \setminus T} \times T \times \P }]$ belongs to $\K[Q']$ if and only if $\psi(y^{\alpha})$ is of the form $(z^{\beta})^{\#T}$ for some $\beta$. Now, if $\psi(y^{\alpha^{1}}) = (z^{\beta^{1}})^{\#T}$, $\psi(y^{\alpha^{2}}) = (z^{\beta^{2}})^{\#T}$, and $y^{\alpha^{1}}|y^{\alpha^{2}}$  then  $\psi(y^{\alpha^{2} - \alpha^{1}}) = (z^{\beta^{2} -\beta^{1}})^{\#T}$.  This implies that $\K[Q']$ is a divisible subsemigroup of $\K[Y_{ R_{[m] \setminus T} \times T \times \P }]$.

Letting $\Pi$ act on $\P$ and since $R_{[m] \setminus T} \times T$ is a finite set, we have that $\Pi$-divisibility on $\K[Y_{ R_{[m] \setminus T} \times T \times \P }]$ is a well-partial-ordering.  Then, Corollary \ref{cor:divisible} implies that $\Pi$-divisibility is also a well-partial-ordering on $\K[Q']$.

Consider the filtration  on $\K[Y_{ R_{[m] \setminus T} \times T \times \P }] \ast \Pi$ using $\K[\tilde{Q}_r] = \K[Y_{ R_{[m] \setminus T} \times T \times [r] }]$ with $\Pi_{n,m}$.  Let $Q'_r  = \tilde{Q}_r \cap Q'$ be the induced filtration in $\K[Q'] \ast \Pi$.  Corollary \ref{cor:divisible} also implies that chains with respect to this filtration stabilize.  In particular, the chain $I_{A_r}/I_{B_r}$ stabilizes.  That is, there is an $r_0 \geq 2$ and a finite generating set $\mathcal{F}$ of $I_{A_{r_0}}/ I_{B_{r_0}}$ such that $\Pi_{r,r_0} \mathcal{F}$ generates  $I_{A_r}/I_{B_r}$ for all $r > r_0$.  Since $r_0 \geq 2$,  Theorem \ref{thm:decomp} implies $I_{B_{r_0}}$  is generated by quadrics which also generate all $I_{B_r}$ up to the action of $\Pi$.  Finally, Proposition \ref{prop:quotient} implies that $\left< \Pi_{r_0, r}I_{A_{r_0}} \right>_{\K[Q_r]} = I_{A_r}$, which proves the theorem.
\end{proof}

\begin{ex}[6-cycle]
The six cycle $\Gamma = \{ \{1,2\},\{2,3\},\{3,4\},\{4,5\},\{5,6\},\{1,6\} \}$ has the independent set $T = \{2,4,6\}$. If we fix $r_1, r_3, r_5$ and send $r_2, r_4, r_6 \to \infty$, then there will be a finite Markov basis for $A_\Gamma$ up to the natural action of the symmetric group.  \qed
\end{ex}

Theorems \ref{thm:finitemarkov} and \ref{thm:independentset} are finiteness results for Markov bases, but it is also natural to extend these ideas to other statistical situations.  Indeed, the Markov bases under consideration are useful tools for studying hierarchical models.  As sets, these models are families of probability distributions inside the probability simplex 
$$\Delta_{R}  = \left\{ p \in \mathbb{R}^{R} :  \sum_{i \in R} p_i = 1, p_i \geq 0, i \in R \right\}.$$
Each point $p \in \Delta_{R}$ is a probability distribution for an $m$-dimensional discrete random vector $Y= (Y_1, \ldots, Y_m)$ with state space equal to $R$. The $i$th coordinate is the probability of the event $Y = i$, and $p_i = P(Y = i)$.

The hierarchical model $\mathcal{M}_\Gamma$ is defined as the set $\mathcal{M}_\Gamma = V(I_{A_\Gamma}) \cap \Delta_{R}$ of solutions to the toric ideal $I_{A_\Gamma}$ inside the probability simplex.  Turning this around, the homogeneous vanishing ideal $\mathcal{I}^h(\mathcal{M}_\Gamma) = I_{A_\Gamma}$ encodes an implicit description of the model that is finite up to symmetry as the number of states of some of the random variables go to infinity.

Using reasoning similar to that found in the preceding proofs, one can deduce finiteness for the implicit representations of families of statistical models as the number of states of some of the variables tend to infinity.  We give brief proofs, which follow the same outlines as those of Theorems \ref{thm:finitemarkov} and \ref{thm:independentset}.

\begin{thm}
For each $r \in \P$, let $\mathcal{M}_r \subseteq \Delta_{R\times [r]}$, where $R = \prod_{i =1}^m  [r_i]$, be a statistical model for $m+1$ dimensional discrete random vectors.  Suppose that each homogeneous vanishing ideal $I_r = \mathcal{I}^h( \mathcal{M}_r) \subseteq \mathbb{R}[X_{R \times [r] }]$ is stable under the action of $\S_r$, and that for each $r$, we have $I_{r} \subseteq I_{r+1}$.  Then, up to symmetry there is a finite set of polynomials that generates the ideals $I_r$ for all $r$.
\end{thm}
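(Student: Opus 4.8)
The plan is to reduce this statement to Corollary \ref{cor:snfinite} in essentially the same way that the proof of Theorem \ref{thm:finitemarkov} does, with the only new observation being that nothing in that argument used the hierarchical/toric structure of the ideals---only that the ideals lived in a nested sequence of finite polynomial rings, were stable under the relevant symmetric group, and formed an ascending chain. First I would set up the ambient rings: since $R = \prod_{i=1}^m [r_i]$ is a fixed finite set, writing $k = \prod_{i=1}^m r_i$ and identifying $R$ with $[k]$, each ring $\mathbb{R}[X_{R \times [r]}]$ is identified with $\mathbb{R}[X_{[k] \times [r]}]$, and the hypothesis $I_r \subseteq I_{r+1}$ together with $\S_r$-stability of $I_r$ makes $I_\circ = I_1 \subseteq I_2 \subseteq \cdots$ an ascending $\S_{(\P)}$-invariant chain in precisely the sense required by Corollary \ref{cor:snfinite} (note that $\S_{[r+1]}$-stability of the larger ideals combined with $I_r \subseteq I_{r+1}$ gives $\langle \S_{[r+1]} I_r \rangle \subseteq I_{r+1}$ automatically, since $I_{r+1}$ is already $\S_{[r+1]}$-stable).

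Next I would simply invoke Corollary \ref{cor:snfinite} directly: there is an $n_0 \in \P$ such that for all $n > n_0$ we have $\langle \S_{[n]} I_{n_0} \rangle_{\mathbb{R}[X_{[k] \times [n]}]} = I_n$. Translating back through the identification $[k] \cong R$, this says exactly that a (necessarily finite) generating set of $I_{n_0} = \mathcal{I}^h(\mathcal{M}_{n_0})$ generates all the ideals $I_r$ up to the action of $\S_r$ for $r > n_0$, which is the asserted finiteness of the symmetric generating set as $r \to \infty$. The homogeneity of the vanishing ideals plays no role in the argument beyond making ``generating set'' a meaningful finite notion; one could even drop it.

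The one point that deserves a sentence of care---and the only place I anticipate any friction---is checking that the chain hypothesis is phrased compatibly with Corollary \ref{cor:snfinite}, i.e.\ that ``$I_r \subseteq I_{r+1}$'' in the ambient $\mathbb{R}[X_{R \times \P}]$ really does restrict to the nesting condition $\langle \S_{[m]} I_n \rangle_{\mathbb{R}[X_{[k]\times[m]}]} \subseteq I_m$ for all $n \le m$. This is immediate by induction on $m - n$: $I_n \subseteq I_{n+1} \subseteq \cdots \subseteq I_m$ gives $I_n \subseteq I_m$, and then $\S_{[m]}$-stability of $I_m$ gives $\S_{[m]} I_n \subseteq \S_{[m]} I_m = I_m$, hence $\langle \S_{[m]} I_n \rangle \subseteq I_m$. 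So there is genuinely no obstacle here; the theorem is a direct corollary, and the proof is three lines: set up the identification $R \cong [k]$, observe the chain is $\S_{(\P)}$-invariant and ascending, and apply Corollary \ref{cor:snfinite}.
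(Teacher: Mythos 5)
Your proposal is correct and matches the paper's own proof, which is exactly the same two-step argument: observe that the hypotheses make $I_1 \subseteq I_2 \subseteq \cdots$ an ascending invariant chain, and then apply Corollary \ref{cor:snfinite}. Your extra remarks (identifying $R$ with $[k]$, and checking that $I_n \subseteq I_m$ plus $\S_{[m]}$-stability of $I_m$ yields $\langle \S_{[m]} I_n \rangle_{\mathbb{R}[X_{[k]\times[m]}]} \subseteq I_m$) just make explicit details the paper leaves implicit.
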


\begin{proof}
The sequence of ideals $I_{1}, I_{2}, \ldots$ forms an ascending invariant chain.  Therefore, by Corollary \ref{cor:snfinite} they have a finite generating set up to the filtration of $\S_{(\mathbb{P})}$ by  the $\S_{r_m}$.
\end{proof}

For each $r \in \P^n$, let $\mathcal{M}_r \subseteq \Delta_{\mathcal{S} \times R}$, where $\mathcal{S} = [s_1] \times \cdots \times [s_m]$ and $R = [r_1] \times \cdots \times [r_n]$, be a statistical model for an $m+n$ dimensional discrete random vector $(Y,Z) = (Y_1, \ldots, Y_m, Z_1, \ldots, Z_n)$.  Suppose that each homogeneous vanishing ideal $I_r = \mathcal{I}^h( \mathcal{M}_r) \subseteq \mathbb{R}[X_{\mathcal{S} \times R }]$ is stable under the action of $\S_{r_1} \times \cdots \times \S_{r_n}$ and assume that for each $r \in \P^n$ and $ t \in \N^n$, we have $I_r \subseteq I_{r + t}$.    To generalize Theorem \ref{thm:independentset} to arbitrary statistical models, we need to explain what should be meant by the condition that a collection of vertices forms an independent set.  The simplest (algebraic) way to guarantee such a generalization is to require that for each $r$, we have $I_{B_{r}} \subseteq I_{r}$, where $I_{B_{r}}$ is the toric ideal of the hierarchical model whose simplicial complex  $\Gamma$ has facets $\{ [m] \cup \{i' \} : i' \in \{1', 2', \ldots, n'\}$.  Note that this is the same ideal appearing in the proof of Theorem \ref{thm:independentset}.


In more statistical language, the condition $I_{B_{r}} \subseteq I_{r}$ for all $r$ is equivalent to the random vector $(Y,Z)$ satisfying the conditional independence statement  
$Z_1 \ind Z_2 \ind \cdots \ind Z_n | Y$ (see Chapter 3 in \cite{Drton2009} for connections between conditional independence and hierarchical/graphical models).  We state our result using the language of conditional independence.  

\begin{thm}
For each $r \in \P^n$, let $\mathcal{M}_r \subseteq \Delta_{\mathcal{S} \times R}$, where $\mathcal{S} = [s_1] \times \cdots \times [s_m]$ and $R = [r_1] \times \cdots \times [r_n]$, be a statistical model for an $m+n$ dimensional discrete random vector $(Y,Z) = (Y_1, \ldots, Y_m, Z_1, \ldots, Z_n)$.  Suppose that each homogeneous vanishing ideal $I_r = \mathcal{I}^h( \mathcal{M}_r) \subseteq \mathbb{R}[X_{\mathcal{S} \times R }]$ is stable under the action of $\S_{r_1} \times \cdots \times \S_{r_n}$ and assume that for each $r \in \P^n$ and $ t \in \N^n$, we have $I_r \subseteq I_{r + t}$.  If, in addition, the $\mathcal{M}_r$ all satisfy the conditional independence constraint $Z_1 \ind Z_2 \ind \cdots \ind Z_n | Y$, then up to symmetry there is a finite set of polynomials that generates the ideals $I_r$ for all $r$.
\end{thm}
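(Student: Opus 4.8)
The plan is to follow the proof of Theorem~\ref{thm:independentset} almost verbatim, with the conditional independence constraint playing the role there played by the independent-set hypothesis. First I would reduce, exactly as in that proof, to a chain-stabilization statement for a single copy of $\Pi$: setting $r_1 = \cdots = r_n = r$ and letting $\S_{[r]}$ act diagonally on the $Z$-indices suffices, since the diagonal group is contained in $\S_{r_1} \times \cdots \times \S_{r_n}$ and a finite generating set up to a smaller group is one up to a larger group, while the case of unequal $r_t \to \infty$ is handled by the same padding observation used throughout Section~\ref{examples}. Working in the limiting ring $\K[Q] := \K[X_{\mathcal{S} \times \P^n}]$ with $\Pi$ acting diagonally on $\P^n$, the sets $Q_r = X_{\mathcal{S} \times [r]^n}$ together with the $\Pi_{n,r}$ form a filtration of $\K[Q] \ast \Pi$ as in Theorem~\ref{thm:pifinite}; the hypothesis $I_r \subseteq I_{r+t}$, together with the relabeling argument of Corollary~\ref{cor:sn}, makes $I_\circ := I_1 \subseteq I_2 \subseteq \cdots$ a $\Pi$-invariant chain with respect to this filtration, and it is enough to prove that $I_\circ$ stabilizes.

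Next I would produce a containing ideal with a Noetherian quotient. The constraint $Z_1 \ind Z_2 \ind \cdots \ind Z_n \mid Y$ says precisely that each $\mathcal{M}_r$ lies inside the hierarchical model $\mathcal{M}_{\Gamma'}$ for $\Gamma' = \{\, [m] \cup \{m+i\} : i \in [n] \,\} \cup 2^{[m]}$, the star over the simplex $2^{[m]}$ (we treat $Y_1, \dots, Y_m$ as one block of fixed dimensions $s_1, \dots, s_m$). Hence $L_r := I_{A_{\Gamma'}} = \mathcal{I}^h(\mathcal{M}_{\Gamma'}) \subseteq \mathcal{I}^h(\mathcal{M}_r) = I_r$ for every $r$. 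The complex $\Gamma'$ is decomposable, so Theorem~\ref{thm:decomp} applies: the chain $L_\circ$ stabilizes, and for $r \geq 2n$ the ideal $L_r$ is generated by quadratic binomials that generate every $L_{r'}$ with $r' \geq r$ up to the diagonal $\S_{[r']}$-action. Put $\mathcal{L} := \mathcal{N}(L_\circ) \subseteq \K[Q]$.

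Then I would invoke the quotient machinery of Proposition~\ref{prop:quotient} and Corollary~\ref{cor:divisible}. The monomial parametrization $x_{\mathbf{y}, \mathbf{z}} \mapsto \prod_{i=1}^n y_{\mathbf{y}, i, z_i}$ of $\mathcal{M}_{\Gamma'}$ identifies $\K[Q'] := \K[Q]/\mathcal{L}$ with the subsemigroup ring of $\K[Y_{(\mathcal{S}\times[n])\times\P}]$ generated by the ``Segre'' monomials $\prod_{i=1}^n y_{\mathbf{y}, i, j_i}$; this subsemigroup is divisible and $\Pi$-invariant (for fixed $\mathbf{y}$ the membership condition is that the $n$ slices indexed by $[n]$ have equal total degree, which is preserved under componentwise subtraction and under the diagonal $\Pi$-action). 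Since $\mathcal{S}\times[n]$ is finite, Theorem~\ref{thm:basicpoly} shows that $\Pi$-divisibility is a well-partial-ordering on $\K[Y_{(\mathcal{S}\times[n])\times\P}]$, so Corollary~\ref{cor:divisible} shows that $\Pi$-divisibility is a well-partial-ordering on $\K[Q']$, that the sets $Q'_r := Q' \cap Y_{(\mathcal{S}\times[n])\times[r]}$ with the $\Pi_{n,r}$ form a filtration of $\K[Q']\ast\Pi$, and that invariant chains with respect to it stabilize. Since $\K[Q_r]/L_r \cong \K[Q'_r]$ compatibly with the filtrations, the images $I_r/L_r$ form such an invariant chain, so there exist $r_0 \geq 2n$ and a finite set $\mathcal{F} \subseteq I_{r_0}/L_{r_0}$ with $\langle \Pi_{r_0,r}\mathcal{F}\rangle_{\K[Q'_r]} = I_r/L_r$ for all $r > r_0$. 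As $r_0 \geq 2n$, $L_{r_0}$ is generated by quadrics that generate all $L_r$ up to $\Pi$; applying Proposition~\ref{prop:quotient} with $L = \langle \Pi_{r_0,r} L_{r_0}\rangle_{\K[Q_r]}$, $M = I_r$, and $N = \K[Q_r]$ then gives $\langle \Pi_{r_0,r} I_{r_0}\rangle_{\K[Q_r]} = I_r$ for all $r > r_0$, i.e.\ $I_\circ$ stabilizes, which is the desired finite symmetric generating set for the $I_r$.

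The step I expect to be the main obstacle is the identification in the third paragraph: that $\K[Q]/\mathcal{L}$ really is, up to isomorphism, a \emph{divisible} $\Pi$-invariant subsemigroup ring of a polynomial ring with finitely many row variables. This rests on (a) knowing that the toric ideal of the star model is exactly $\mathcal{L}$, so that the quotient is the coordinate ring of a direct join of Segre products --- which is where decomposability and Theorem~\ref{thm:decomp} are doing genuine work, both here and in supplying the uniform quadratic generators of the $L_r$ --- and (b) checking divisibility for that join, in analogy with the corresponding assertion in the proof of Theorem~\ref{thm:independentset}. The reduction to the diagonal action and the verification of the seven filtration axioms are routine but must be set up with care.
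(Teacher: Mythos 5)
Your proposal is correct and follows essentially the same route as the paper, whose own proof simply reduces to the decomposable complex $\Gamma'$ (noting the conditional independence hypothesis forces the toric ideal of $\Gamma'$ into each $I_r$) and then says the rest follows the proof of Theorem \ref{thm:independentset}; your write-up is an expanded version of exactly that argument, including the reduction to the diagonal $\Pi$-action, the Segre/divisibility identification of the quotient, and the use of Theorem \ref{thm:decomp}, Corollary \ref{cor:divisible}, and Proposition \ref{prop:quotient}. (You even state the containment in the correct direction, $I_{B_r} \subseteq I_r$, where the paper's wording has it reversed.)
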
   

\begin{proof}
The key feature of this theorem is the conditional independence constraint $$Z_1 \ind Z_2 \ind \cdots \ind Z_n | Y.$$  Let $\Gamma'$ be the simplicial complex with facets $  [m] \cup \{i'\}$ such that $i' \in \{1', 2', \ldots, n'\}$; this is the decomposable simplicial complex that appeared in the proof of Theorem \ref{thm:independentset}.  The conditional independence statement holding for the model $\mathcal{M}_{r}$ is equivalent to $I_{B_{r}} \subseteq I_{r}$ (see Chapter 3 in \cite{Drton2009}).  The remainder of the proof now follows closely that of Theorem \ref{thm:independentset}.
\end{proof}


\section{Further Directions}\label{sec:further}

From the standpoint of computational algebra, we have proved  theorems asserting the existence of finite generating sets of ideals up to symmetry.  Many open problems remain about how to transition from these existence theorems to effective versions and, in particular, how to develop specific algorithms for computing with symmetric ideals.  We  outline some of these challenges here.

Many chains of ideals in algebraic statistics arise as kernels of ring homomorphisms.  Besides knowing that these chains eventually stabilize and have finite generating sets, one desires upper bounds on when stabilization occurs in terms of the input data.  To be more precise, for each $r \in \P$, let $\phi_r : \K[X_{[k] \times [r]}] \to R$ be a ring homomorphism and let $I_r = \ker \phi_r$.  Suppose that each $I_r$ is invariant under the action of $\S_{r}$ and that  this sequence of kernels is nested: $I_r \subseteq I_{r+1}$.  We call such a chain a \emph{chain of kernels}.  

\begin{ques}
Given a chain of kernels $I_\circ$, find upper bounds on $n_0$ such that 
$$\left< \S_{n} I_{n_0} \right>_{\K[X_{[k] \times [n]}]} = I_n \ \mbox{  for all } n > n_0$$
 in terms of the ring homomorphisms $\phi_r$.  Of especial interest is when each $I_r$ is a toric ideal, in which case $\phi_r = \phi_A$ for a integral matrix $A$.
\end{ques}

In Section \ref{examples}, we showed that $\Pi$-invariant divisible semigroup rings $\K[Q]$  that are subrings of  $\K[X_{[k] \times \P}]$ are Noetherian $\K[Q] \ast \Pi$ modules.  A natural question is to what extent this property generalizes.

\begin{ques}\label{ques:gensemigroup}
Let $\K[Q] \subseteq \K[X_{[k] \times \P}]$ be a $\Pi$-invariant semigroup ring which is finitely generated under the action of $\Pi$.  Is it true that $\K[Q]$ is a Noetherian $\K[Q]\ast \Pi$-module?
\end{ques}

Alexei Krasilnikov constructed a remarkably simple example which shows that the answer to Question \ref{ques:gensemigroup} is ``no''.

\begin{ex}[Krasilnikov]\label{ex:krasilnikov}
Let $k = 2$ and let $\K[Q] \subseteq \K[X_{[2] \times \P}]$ be the semigroup generated by the monomials $x_{1i}x_{2j}$ where $i<j$.  Note that this semigroup ring is finitely generated up to the action of $\Pi$ by the single monomial $x_{11}x_{22}$.

For $n \geq 3$ define the element $w_{n} \in Q$ by
$$
w_{n} = x_{11}x_{2n} \prod_{i=1}^{n-1}x_{1i}x_{2i+1}.
$$
Consider the the multigrading on the ring $\K[Q]$ defined by $\deg x_{ij} = e_{j} \in \N^{\N}$.  In particular, the $\deg w_{n} = (2,2,\ldots, 2, 0, 0, \ldots)$.  Suppose that some $w_{m} |_{\Pi} w_{n}$.  Then there is a $ p \in \Pi$ such that $pw_{m} = h w_{n}$.  Now, $\deg p w_{m} \in \{0,2\}^{\N}$  so $\deg h \in   \{0,2\}^{\N}$ as well.  Examining at the right-most nonzero entry in $\deg h$, we see that $x_{2k_{1}}^{2} | h$ for some $k_{1}$.  Also, the right-most nonzero entry in $\deg pw_{m}$ implies that   $x_{2k_{2}}^{2} | p w_{m}$ for some $k_{2}$.  This implies that $x_{2k_{1}}^{2}x_{2k_{2}}^{2}$ divides $w_{n}$ which is impossible.
Hence,  the sequence $w_{3}, w_{4}, \ldots$ is a bad sequence and by Proposition \ref{prop:equivorder} and Theorem \ref{main_GB_thm}, $\K[Q]$ is not a Noetherian $\K[Q] \ast \Pi$-module. \qed
\end{ex}

While we have been mainly interested in ideals that are invariant under the action of the symmetric group, we needed to restrict to actions of the monoid of increasing functions $\Pi$ in order to prove our finiteness theorems.  We are lead to wonder if this strategy could always be used to prove Noetherianity under symmetric group actions or if there might be some pathological counterexamples or obstructions.

In particular, let $R$ be a ring equipped with an $\S_\P$ action.  We say that this action is $\S_\P$-finite if for every $f \in R$ there is an $m \in \P$ such that $\sigma \cdot f = \sigma' \cdot f$ for all $\sigma, \sigma' \in \S_\P$ such that $\sigma(i) = \sigma'(i)$ for all $i \leq m$.  If $R$ has a $\S_\P$-finite action, it also has a natural action by the monoid of increasing functions $\Pi$.

\begin{ques}
Is there a ring $R$ with a $\S_\P$-finite action such that $R$ is a Noetherian $R \ast \S_\P$-module but not a Noetherian $R \ast \Pi$-module?
\end{ques}

One of the lessons we have learned about proving Noetherianity of $\K[X_{[k] \times \P}]$ as a $\K[X_{[k] \times \P}]\ast \S_\P$-module is that it is not possible to define Gr\"obner bases in this setting.  This suggests that an approach for computing with ideals that have a natural symmetry group using Gr\"obner bases might not work well if the entire symmetry group is used.  However, working with a semigroup that has a $P$-order might be natural and useful, and not require the bookkeeping of a full symmetry group.  This suggests the following implementation challenge.  

\begin{pr}
Develop and implement algorithms for computing with symmetric ideals by using monoids of transformations and $P$-orders.
\end{pr}

For some recent work along these lines, including an algorithm for computing with certain classes of invariant ideals, we refer the reader to \cite{BrouwerDraisma}.


\section*{Acknowledgments}

We thank Alexei Krasilnikov for providing us with references to the work of Cohen and for the use of Example \ref{ex:krasilnikov}.


\end{document}